\documentclass{article}
\usepackage[utf8]{inputenc}
\usepackage{amssymb}
\usepackage{amsmath}
\usepackage{amsthm}
\usepackage{accents}
\usepackage[margin=0pt]{subfig}
\usepackage{tikz}
\usepackage{empheq}
\usepackage{doi}
\usetikzlibrary{arrows.meta}
\usepackage{hyperref} 
\usepackage{enumitem}

\theoremstyle{plain}
\newtheorem{lemma}{Lemma}
\newtheorem{theorem}[lemma]{Theorem}

\newtheorem{corollary}[lemma]{Corollary}
\newtheorem{hypothesis}[lemma]{Hypothesis}
\theoremstyle{definition}
\newtheorem{defi}{Definition}[section]
\newtheorem{remark}{Remark}[section]

\newcommand\R{\mathbb{R}}

\title{Propagation failure in heterogeneous FitzHugh--Nagumo systems via coupled upper and lower solutions}
\author{Mat\'ias Courdurier\footnotemark[1] and Esteban Paduro\footnotemark[2]}
\date{September 13, 2026}
\begin{document}
\maketitle

\footnotetext[1]{Departamento de Matem\'atica, Facultad de Matem\'aticas, Pontificia Universidad Cat\'olica de Chile, 
Santiago, Chile.}

\footnotetext[2]{Instituto de Ingenier\'ia Matem\'atica y Computacional, Facultad de Matem\'aticas, Pontificia Universidad Cat\'olica de Chile, 
Santiago, Chile. Corresponding author. Email: {\tt esteban.paduro@uc.cl}
}

\begin{abstract}
Traveling waves are fundamental objects in reaction--diffusion systems; however, spatial heterogeneities can distort or inhibit their propagation.
Determining whether a given heterogeneity results in propagation failure remains challenging, particularly in systems with coupled components. 
In this paper, we present explicit constructions of stationary coupled upper and lower solutions tailored to specific heterogeneities for a one-dimensional FitzHugh--Nagumo system. By leveraging the mixed quasimonotone structure of the system, these barriers yield sufficient conditions for the failure of propagation for families of trapped initial conditions. 
The stationary constructions establish directional signal blocking in FitzHugh--Nagumo neuron models with localized depressed dynamics or abrupt geometric variations, and the resulting stationary barriers persist for sufficiently small recovery diffusion.
Additionally, we introduce explicit almost stationary barriers, which provide conditional bounds on the expansion of componentwise threshold regions in homogeneous media.

\end{abstract}

\vspace{0.2 cm}

{\bf Keywords:} directional blocking, upper and lower solutions, reaction--diffusion systems, mixed quasimonotonicity, failure of propagation, comparison principle, heterogeneous media

\vspace{0.2 cm} {\bf AMS subject classifications:} 35K57, 35B51, 35Q92

\section{Introduction}

The study of propagation and failure of propagation in reaction--diffusion equations has been a central focus in mathematical biology, physics, and applied analysis since the foundational work of 
Kolmogorov, Petrovskii, and Piskunov 
\cite{kolmogorov_study_1937} and 
Fisher 
\cite{fisher_wave_1937}. Classical theory primarily addresses conditions for the successful spread of an invading state throughout a medium and focuses on questions such as existence, stability, and multiplicity of traveling or transition fronts, both for scalar equations \cite{mckean_nagumos_1970,volpertTravelingWaveSolutions2000,xinFrontPropagationHeterogeneous2000,zlatosExistenceNonexistenceTransition2017a} and for systems \cite{rinzel_traveling_1973,hastingsSingleMultiplePulse1982,jonesStabilityTravellingWave1984b}. 
In scalar KPP--Fisher equations, the complementary problem of
characterizing the asymptotic spreading of compactly supported solutions in general heterogeneous media has been developed in \cite{berestycki_asymptotic_2022}.
In contrast, mechanisms underlying propagation failure---in which an initial perturbation does not invade or propagate through the medium---have been studied less systematically.

In this work, we study the failure of propagation in a heterogeneous reaction--diffusion system by identifying an obstruction to propagation through a comparison principle and explicit front-like coupled upper and lower solutions.

\subsection{Comparison principles and failure of propagation}

For scalar bistable equations, the comparison principle allows propagation failure to be established using stationary solutions that impede propagation \cite{fifeClinesInducedVariable1981,
pauwelussenNerveImpulsePropagation1981,
lewis_wave-block_2000,
dirr_pinning_2006}. In heterogeneous media, localized variations may generate stationary profiles and propagation failure. Examples include passive or reaction-free intervals \cite{sneyd_propagation_1997, lewis_wave-block_2000,caputoStoppingReactiondiffusionFront2021}, transitions between distinct reaction regimes \cite{LiangZhangZhang2026}, variations in diffusion or geometry \cite{ikedaPropagationExcitedState1989}, and periodic or geometric pinning mechanisms \cite{dirr_pinning_2006,caputoBlocking2DBistable2026}. Such propagation failure can often be studied using stationary profiles and parabolic comparison arguments \cite{fifeClinesInducedVariable1981}. 
The existence of a transition front is itself not automatic in heterogeneous bistable media, and its nonexistence may reflect either stationary blocking or the development of multiple propagating interfaces \cite{zlatosExistenceNonexistenceTransition2017a}.

For reaction--diffusion systems, the scalar paradigm breaks down in a fundamental way. In general, systems do not satisfy a maximum or comparison principle, and propagation failure cannot be reduced to pointwise ordering arguments \cite{volpertTravelingWaveSolutions2000}. Still, structural conditions such as quasimonotonicity permit comparison with suitably coupled upper and lower solutions \cite{termanComparisonTheoremsReactionDiffusion1982, luMaximumPrinciplesComparison1993,
paoNonlinearReactiondiffusionSystems1982, paoNonlinearParabolicElliptic1993}. The nature of the solutions also matters: fronts connect two distinct equilibrium states of a system, whereas pulses are localized bumps that decay at infinity. In the scalar setting, comparison with a stationary front completely characterizes a family of front solutions that never cross it. In the systems considered here, the relevant propagating structures are pulses, but a pointwise comparison with a stationary pulse 
has a much weaker interpretation than comparison with a stationary front.

Propagation failure in systems may manifest through stationary fronts or localized stationary pulses \cite{chen_stability_2014}. For instance, in homogeneous double-diffusive FitzHugh--Nagumo (FHN)-type systems, positive localized stationary so\-lu\-tions were constructed in \cite{klaasenStationaryWaveSolutions1984, reineckePositiveSolutionRNto1999}. In contrast, 
\cite{chenStandingPulseSolutions2012} constructed standing pulse solutions with sign changes, whose stability was subsequently investigated in \cite{chen_stability_2014}. 
These stationary structures suggest that, in systems, propagation failure may correspond to pinning---convergence toward nontrivial localized stationary attractors---in addition to convergence to constant steady states.  

Spatial heterogeneity further enriches the dynamics of systems and can induce drift and deformation of pulses
\cite{nishiuraDynamicsTravelingPulses2007b,
chen_heterogeneity-induced_2018}.
It can also support pinned or stationary heterogeneous structures, including heteroclinic profiles obtained by sub- and supersolution methods and fronts or pulses pinned by jump-type reaction defects \cite{kajiwara_sub-supersolution_2018,
vanheijsterPinnedSolutionsHeterogeneous2019}.
Propagation failure induced by heterogeneities has also been studied in the context of nerve conduction, where changes in axonal geometry affect pulse shape and conduction velocity 
\cite{goldstein_changes_1974}. More abrupt changes, including diameter changes and branching, can lead to one-way transmission or blocking in reduced reaction--diffusion models \cite{keenerCausesPropagationFailure1987, pauwelussenOneWayTraffic1982,
ikedaWaveBlockingPhenomenaBistable1989a,
ikedaStabilityAnalysisStationary1991}.

Taken together, these results highlight a structural distinction. In scalar bistable equations, failure of propagation is closely tied to comparison principles and stationary solutions \cite{pauwelussenNerveImpulsePropagation1981,lewis_wave-block_2000}, whereas in systems, blocking may arise from geometric obstructions, localized stationary solutions, or timescale effects \cite{volpertTravelingWaveSolutions2000}. In the terminology of \cite{keenerCausesPropagationFailure1987}, geometric obstructions correspond primarily to geometric block, whereas recovery-dependent and rate-dependent mechanisms are forms of functional block.

Our approach combines comparison for mixed quasimonotone systems with explicit front-like coupled upper and lower solutions to identify obstructions to pulse propagation. It builds on the reformulation of the maximum principle in terms of comparison functions \cite{chuehPositivelyInvariantRegions1977}, and the theory of upper and lower solutions for quasimonotone systems \cite{termanComparisonTheoremsReactionDiffusion1982,luMaximumPrinciplesComparison1993,paoNonlinearParabolicElliptic1993}. Related constructions of upper and lower comparison functions have been used to establish propagation failure in \cite{pauwelussenOneWayTraffic1982,almeida_wave_2022}. 

The mixed-quasimonotone comparison principle is classical in origin; our contribution is to combine an interface-compatible formulation of that principle with explicit coupled front-like barriers adapted to heterogeneous FitzHugh--Nagumo systems. The stationary constructions yield sufficient conditions for directional blocking of families of trapped initial data through a common profile mechanism, while the small-recovery-diffusion and almost-stationary-barrier results provide, respectively, a robustness theorem and conditional bounds on threshold-region expansion.

This work is organized as follows. In Section \ref{section_definitions_and_main_results}, we describe our framework and state our main results. 
Sections \ref{section_barrier_heterogeneous}--\ref{section_moving_barrier} develop the stationary and almost stationary barrier constructions. 
Section \ref{section_numerical_experiments} presents numerical simulations illustrating the heterogeneous and homogeneous barrier constructions and exploring parameter regimes beyond those covered by the theory.

\section{Framework and main results}\label{section_definitions_and_main_results}
\subsection{The FitzHugh--Nagumo model}
We consider the one-dimensional FHN system \cite{FitzHugh1961, nagumo_active_1962} as a model problem for studying propagation and directional blocking in heterogeneous reaction--diffusion systems. 
\begin{empheq}[left=\empheqlbrace]{equation}\label{FHN_pde_general}
\begin{aligned}
\partial_t v - \frac{1}{a(x)} \partial_x\bigl(b(x)\partial_x v\bigr) &= f(v,x) - w, &t\in (0,T), x\in \R,\\
\tau \partial_t w - D \partial_x^2 w & = v - \gamma w, &t\in (0,T), x\in \R, \\
\bigl(v(x,0),w(x,0)\bigr) &= \bigl(v_0(x), w_0(x)\bigr),  &x\in \R,\\
\lim_{\lvert x\rvert\to \infty} (\lvert v(t,x)\rvert+\lvert w(t,x)\rvert) &= 0, &t\in (0,T).\\
\end{aligned}
\end{empheq}
Following standard terminology, we call $v$ the activator variable and $w$ the recovery variable; correspondingly, the first and second equations in \eqref{FHN_pde_general} are the activator and recovery equations.
Here $\gamma, \tau>0$ and $D\geq0$. The recovery-diffusion coefficient $D$ may be constant or spatially dependent, as specified in each result, and is assumed to be either identically zero or bounded above and below by positive constants. We assume that there exist constants $m, M>0$ such that
\(0<m\leq a(x)\leq M\), \(0\leq b(x)\leq M\) for all \(x\in\mathbb R\). The functions $a$ and $b$ are piecewise smooth with finitely many jump discontinuities.
We assume that, for every fixed $v\in\mathbb R$, the map $x\mapsto f(v,x)$ is piecewise continuous with finitely many jump discontinuities.
For every $R>0$, there exists $L_R>0$ such that \(
\lvert f(v,x)-f(\tilde v,x)\rvert
\leq L_R\lvert v-\tilde v\rvert
\)
for all $\lvert v\rvert,\lvert\tilde v\rvert\leq R$ and all $x\in\mathbb R$. Finally, $f$ is locally bounded uniformly in $x$: 
\begin{equation}\label{local_uniform_boundedness}
\sup_{\substack{x\in\mathbb R\\ \lvert v\rvert\leq R}}
\lvert f(v,x)\rvert<\infty
\qquad\text{for every }R>0.
\end{equation}
A standard homogeneous choice is the cubic nonlinearity $f(v,x) = v(1-v)(v-\alpha)$ with $\alpha \in (0,1)$. 

For the construction of stationary barriers in heterogeneous media, we assume that \( f(v,x)=f_0(v) \) outside a compact set and that the homogeneous reaction term $f_0:\mathbb R\to\mathbb R$ satisfies:
\begin{equation}\label{condition_H1} 
\begin{split}
&\text{(i)}~f_0(0)=0,\\
&\text{(ii)}~-f_0'(0) >1/\gamma,\\
&\text{(iii)}~\text{There exists } v_0\geq0 \text{ such that }
\min\{-f_0(v),f_0(-v)\}\geq \frac{v}{\gamma} \text{ for every }v\geq v_0.
\end{split}
\end{equation}
For a cubic nonlinearity $f_0(v) = v(1-v)(v-\alpha)$, condition (i) is satisfied, and condition (ii) is equivalent to
$\alpha>1/\gamma$. Moreover, \(
\min\{-f_0(v),f_0(-v)\}
=v^3+O(v^2)\) as \(v\to+\infty\), so condition (iii) is also satisfied. In contrast, an ignition-type nonlinearity satisfying $f_0(v) = 0$ for $v\in [0, \theta]$, with $\theta>0$, does not satisfy condition~(ii).
\begin{remark}
Condition \eqref{condition_H1}.(ii) ensures strong linear damping near the rest state. More precisely, a simple trace-determinant calculation for the spatially homogeneous linearization at $(0,0)$ shows that every eigenvalue $\lambda$ satisfies \( \lvert\operatorname{Re}\lambda\rvert>\lvert\operatorname{Im}\lambda\rvert\) for every \(\tau>0\). Related conditions appear in \cite{Rauch1978,pauwelussenOneWayTraffic1982}.
\end{remark}

To formulate the comparison principle used throughout the paper, we embed \eqref{FHN_pde_general} into a more general class of mixed quasimonotone reaction--diffusion systems, for which comparison methods based on coupled upper and lower solutions are available.

\subsection{Solutions and barriers for mixed quasimonotone systems}
Let $\Omega = (\omega_1,\omega_2)\subset \R$ be an interval, let  $T>0$, and consider
\begin{empheq}[left=\empheqlbrace]{equation}\label{FHN_pde_general_quasimonotone}
\begin{aligned}
\mathcal{L}_1 v:= \partial_t v - \frac{1}{a_1(x)} \partial_x(b_1(x)\partial_x v) &= f_1(v,w,x), &t\in (0,T), x\in \Omega,\\
\mathcal{L}_2 w:= \partial_t w - \frac{1}{a_2(x)} \partial_x(b_2(x)\partial_x w) &= f_2(v,w,x) , &t\in (0,T), x\in \Omega.
\end{aligned}
\end{empheq}

We supplement \eqref{FHN_pde_general_quasimonotone} with prescribed initial data and homogeneous Neumann boundary conditions at each finite endpoint of $\Omega$: if $\omega_1>-\infty$, we impose \( (b_1\partial_x v)(\omega_1^+, t)= (b_2\partial_x w)(\omega_1^+,t)=0\), and if $\omega_2<\infty$, we impose \((b_1\partial_x v)(\omega_2^-,t)= (b_2\partial_x w)(\omega_2^-,t)=0\). At an infinite endpoint, the corresponding components are assumed to converge to zero. We assume that, for $i=1,2$, \(0<m\leq a_i(x)\leq M\), \(0\leq b_i(x)\leq M\), and that $a_i$ and $b_i$ are piecewise smooth with finitely many jump discontinuities.

\begin{hypothesis}\label{hyp:mixed-quasimonotone}
The functions $f_1,f_2$ satisfy the following conditions:
\begin{enumerate}[label=(\roman*)]
\item For every fixed $(v,x)$, the map
      $w\mapsto f_1(v,w,x)$ is non-increasing.
\item For every fixed $(w,x)$, the map
      $v\mapsto f_2(v,w,x)$ is non-decreasing.
\item For every $(r,s)\in\mathbb R^2$, the maps $x\mapsto f_i(r,s,x)$, $i=1,2$,  are piecewise continuous.
\item For any compact $R\subset \R^2$ there exists a constant $L_R>0$, independent of $x$, such that
\[
\lvert f_i(r,s,x)-f_i(\tilde r,\tilde s,x)\rvert
\leq
L_R\bigl(\lvert r-\tilde r\rvert +\lvert s-\tilde s\rvert \bigr),\qquad i=1,2,
\] 
for all $(r,s), (\tilde{r},\tilde{s})\in R$.
\end{enumerate}
\end{hypothesis}
Conditions (i) and (ii) express the mixed quasimonotonicity of the reaction term $(f_1,f_2)$.

For a function $g$ with one-sided traces at $\xi$, write $[g]_\xi = g(\xi^+) - g(\xi^-)$. We use the following notion of a piecewise classical solution. 

\begin{defi}[Piecewise classical solution]
\label{definition_solution}
Let $\Omega=(\omega_1,\omega_2)$, where $-\infty\leq \omega_1<\omega_2\leq+\infty$.
A continuous function \(
(v,w):\Omega\times[0,T)\to\mathbb R^2 \)
is called a piecewise classical solution of
\eqref{FHN_pde_general_quasimonotone} if the following conditions hold.

\begin{enumerate}
\item There exist finitely many fixed interface points
\(\omega_1=p_0<p_1<\cdots<p_N=\omega_2\) such that the coefficients $a_i,b_i$ are smooth on each $I_k=(p_{k-1},p_k)$, and both components belong to
\(C^{2,1}(I_k\times(0,T))
\cap
C^{1,0}(\overline{I_k}\times(0,T)),
\)
where the closure is taken in $\mathbb R$. Regularity up to the spatial boundary is required only when the corresponding endpoint of $I_k$ is finite. In particular, the one-sided spatial derivatives are well defined at
each interior interface, but they are not required to coincide. Both components extend continuously at $t=0$.

\item The equations in \eqref{FHN_pde_general_quasimonotone} hold pointwise on $I_k\times(0,T)$ for every $k$.

\item At every interior interface $p_k$, the solution satisfies  the transmission conditions expressing continuity of the diffusive fluxes: 
\begin{equation}\label{transmission_conditions0}
\relax [b_1\partial_x v]_{p_k}=0,
\qquad
\relax [b_2\partial_x w]_{p_k}=0,
\qquad k=1,\dots,N-1.
\end{equation}
These conditions ensure that the divergence terms do not produce singular measures at the interfaces.

\item The initial and boundary conditions are satisfied, and at each infinite endpoint the prescribed decay condition holds.
\end{enumerate}
\end{defi}
 
We next introduce the coupled upper and lower solutions used in the comparison argument. Because the reaction term is mixed quasimonotone, the upper and lower components must be coupled across the two equations.

This definition is closely related to upper and lower solutions for mixed quasimonotone systems \cite{paoNonlinearReactiondiffusionSystems1982, paoNonlinearParabolicElliptic1993}, while allowing piecewise smooth profiles, transmission inequalities, and vanishing conductivities.

\begin{defi}[Barrier]\label{defi_barrier}
Let $\Omega=(\omega_1,\omega_2)$, where $-\infty\leq \omega_1<\omega_2\leq+\infty$. A continuous function $(\bar{v},\bar{w},\underaccent{\bar}{v},\underaccent{\bar}{w}): \Omega \times [0,T) \to \R^4$ is called a barrier for system \eqref{FHN_pde_general_quasimonotone} if $\underaccent{\bar}{v}\leq \bar{v}$, $\underaccent{\bar}{w}\leq \bar{w}$, and the following conditions hold.
\begin{enumerate}
\item There exist finitely many interface locations $p_k\in C^1([0,T))$, $k=1,\dots,N-1$, such that $\omega_1=p_0<p_1(t)<\dots<p_{N-1}(t)<p_N=\omega_2$, for every $t\in [0,T)$. Every discontinuity of a coefficient $a_i$ or $b_i$ is represented by a stationary interface. The remaining interfaces may depend on time.
Every moving interface lies in a region where all coefficients are smooth.

\item On each region \(\mathcal Q_k
= \{(x,t):p_{k-1}(t)<x<p_k(t),\ 0<t<T\}
\) the coefficients are smooth, and the four components belong to \(C^{2,1}(\mathcal Q_k)\). The components and their first spatial derivatives possess continuous one-sided traces on the finite lateral boundaries of $\mathcal Q_k$, and the components extend continuously at $t=0$.

\item The coupled differential inequalities
\begin{empheq}[left=\empheqlbrace]{equation}\label{system_sub_super_solutions}
\begin{aligned}
\mathcal{L}_1 \bar{v}  -f_1(\bar{v},\underaccent{\bar}{w},x) &\geq 0 \geq \mathcal{L}_1 \underaccent{\bar}{v} - f_1(\underaccent{\bar}{v}, \bar{w},x) ,\\
\mathcal{L}_2 \bar{w} - f_2(\bar{v}, \bar{w},x) &\geq 0 \geq \mathcal{L}_2 \underaccent{\bar}{w} - f_2(\underaccent{\bar}{v},\underaccent{\bar}{w},x) ,
\end{aligned}
\end{empheq}
hold pointwise in \(\mathcal Q_k\).

\item At each interior interface $\xi= p_k(t)$, $k=1, \dots,N-1$, we require the transmission inequalities
\begin{equation}\label{transmission_condition}
\relax [b_1 \partial_x \underaccent{\bar}{v}]_{\xi} \geq 0, \quad 
[b_2 \partial_x \underaccent{\bar}{w}]_{\xi} \geq 0, \quad 
[b_1 \partial_x \bar{v}]_{\xi} \leq 0, \quad [b_2 \partial_x \bar{w}]_{\xi} \leq 0.
\end{equation}

\item At finite endpoints, we require the following outgoing flux conditions. If $p_0>-\infty$, 
\[
\begin{aligned}
&(b_1\partial_x\bar v )(p_0^+,t)\leq0,
&(b_2\partial_x\bar w)(p_0^+,t)\leq0,\\
&(b_1\partial_x\underaccent{\bar}{v})(p_0^+,t)\geq0,
&(b_2\partial_x\underaccent{\bar}{w})(p_0^+,t)\geq0.
\end{aligned}
\]
And if $p_N<\infty$,
\[
\begin{aligned}
&(b_1\partial_x\bar v )(p_N^-,t)\geq0,
&(b_2\partial_x\bar w)(p_N^-,t)\geq0,\\
&(b_1\partial_x\underaccent{\bar}{v})(p_N^-,t)\leq0,
&(b_2\partial_x\underaccent{\bar}{w})(p_N^-,t)\leq0.
\end{aligned}
\]
\end{enumerate}
\end{defi}
The key advantage is that barriers satisfy only differential inequalities, allowing substantially more flexibility than exact stationary or traveling-wave solutions.

\begin{remark}
The signs in \eqref{transmission_condition} are precisely those for which the differential inequalities hold across the interfaces in the distributional sense without generating singular contributions of the wrong sign (with our sign convention $[g]_\xi = g(\xi^+) - g(\xi^-)$). 
In the comparison proof, we treat the weighted time derivative globally on a fixed spatial domain and integrate only the diffusion term piecewise. Consequently, moving interfaces contribute only through spatial flux jumps, and no interface-velocity terms appear.
\end{remark}
\begin{lemma}[Symmetric stationary barriers for $D=0$]
\label{lem:symmetric_stationary_barriers}
Consider system \eqref{FHN_pde_general} with $D=0$, and let $V:\Omega\to[0,\infty)$ be continuous and piecewise $C^2$, possess well-defined one-sided derivatives at its interfaces, and satisfy the regularity and interface requirements of Definition \ref{defi_barrier}.
Then \((\bar v, \bar w, \underaccent{\bar}{v}, \underaccent{\bar}{w}) = ( V, V/\gamma, -V, -V/\gamma )\) is a stationary barrier if and only if, on every subinterval where $V$ is $C^2$,
\begin{equation}
\label{system_trapping_reduced}
\frac{1}{a(x)}
\partial_x(b(x)\partial_x V)
\leq
\min\{-f(V,x),f(-V,x)\}
-\frac{V}{\gamma},
\end{equation}
and, at every interior interface $\xi$,
\begin{equation}
\label{transmission_condition_symmetric}
[b\partial_xV]_\xi\leq0.
\end{equation}
At finite endpoints, $V$ is required to satisfy $b \partial_x V\leq0$ at the finite left endpoint and $b \partial_x V\geq0$ at the finite right endpoint, using interior traces.
\end{lemma}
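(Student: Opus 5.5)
The plan is to embed \eqref{FHN_pde_general} with $D=0$ into the mixed quasimonotone framework and check the four inequalities of \eqref{system_sub_super_solutions} directly for the proposed quadruple. Dividing the recovery equation by $\tau$, we set $a_1=a$, $b_1=b$, $f_1(v,w,x)=f(v,x)-w$, and $f_2(v,w,x)=(v-\gamma w)/\tau$, with $b_2\equiv0$ (so $a_2$ is irrelevant and can be taken as $1$). Hypothesis \ref{hyp:mixed-quasimonotone} holds: $f_1$ is non-increasing in $w$, and $f_2$ is non-decreasing in $v$. The profiles are stationary and $b_2=0$, so $\mathcal L_2$ annihilates them.

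The recovery inequalities are then immediate. The upper one reads $0-(V-\gamma V/\gamma)/\tau\ge0$, and the lower one reads $0-(-V+\gamma V/\gamma)/\tau\le0$. Both are equalities, so they hold for any $V$, and the choice $\bar w=V/\gamma$ is exactly what makes this work. For the activator, write $\mathcal A V:=\frac1a\partial_x(b\partial_xV)$. The upper inequality is $-\mathcal A V-f(V,x)+(-V/\gamma)\ge0$, where the sign comes from $-f_1(\bar v,\underaccent{\bar}{w},x)=-f(V,x)+\underaccent{\bar}{w}$. This rearranges to $\mathcal A V\le -f(V,x)-V/\gamma$. The lower inequality is $\mathcal A V-f(-V,x)+V/\gamma\le0$, that is, $\mathcal A V\le f(-V,x)-V/\gamma$. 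Since the constraint on $\mathcal AV$ is the same upper bound in both cases, the two conditions hold simultaneously if and only if \eqref{system_trapping_reduced} holds. Every step is an equivalence, which gives the ``if and only if''.

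The ordering requirements $-V\le V$ and $-V/\gamma\le V/\gamma$ follow from $V\ge0$. For the interfaces, since $b_2=0$ the $w$-transmission and endpoint conditions are trivially $0\le0$ and $0\ge0$. For $v$, the conditions $[b\partial_x V]_\xi\le0$ and $[b\partial_x(-V)]_\xi\ge0$ coincide, and both are \eqref{transmission_condition_symmetric}. In the same way, the endpoint conditions for $\bar v=V$ and $\underaccent{\bar}{v}=-V$ reduce to the stated sign conditions on $b\partial_xV$. Regularity is assumed in the statement.

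The only delicate point is bookkeeping rather than analysis. We must get the coupling right, meaning that $\bar v$ pairs with $\underaccent{\bar}{w}=-V/\gamma$ and $\underaccent{\bar}{v}$ pairs with $\bar w$. We must also justify treating the $w$-equation with $b_2\equiv0$ and the factor $\tau$ within the framework. This works because Definition \ref{defi_barrier} allows $b_i\ge0$, and dividing by $\tau>0$ preserves all the inequalities.
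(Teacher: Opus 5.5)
Your proposal is correct and follows essentially the same route as the paper. The recovery inequalities hold with equality, and the two activator inequalities, with the correct cross-coupling $\bar v\leftrightarrow\underaccent{\bar}{w}$ and $\underaccent{\bar}{v}\leftrightarrow\bar w$, give the two halves of \eqref{system_trapping_reduced}; the $v$-transmission and endpoint conditions reduce to the single sign condition on $b\partial_xV$, and the $w$-conditions are vacuous. Your explicit embedding (dividing the recovery equation by $\tau$ and taking $b_2\equiv0$) only spells out what the paper leaves implicit.
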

\begin{proof}
Under the symmetric ansatz \(
(\bar v,\bar w ,\underaccent{\bar}{v} , \underaccent{\bar}{w}) = ( V, V/\gamma, -V, -V/\gamma )\), the two recovery inequalities hold with equality because the barrier is stationary and $D=0$. The two activator inequalities reduce to
\(\frac{1}{a(x)}
\partial_x(b(x)\partial_x V)
\leq -f(V,x)-\frac{V}{\gamma}
\) and
\(
\frac{1}{a(x)}
\partial_x(b(x)\partial_x V)
\leq f(-V,x)-\frac{V}{\gamma},
\) which are equivalent to \eqref{system_trapping_reduced}. Finally, the transmission inequality for $\bar{v}$ is
$[b\partial_xV]_\xi\leq0$, while the one for $\underaccent{\bar}{v}$ follows from \(
[b\partial_x(-V)]_\xi=-[b\partial_xV]_\xi\geq0\). Observe that because $D=0$, transmission inequalities on $\bar{w}$ and $\underaccent{\bar}{w}$ are vacuous.
The boundary inequalities follow similarly.
\end{proof}

Lemma \ref{prop_first_energy_estimate} shows that the required boundary condition for the barrier is compatible with the boundary condition imposed for \eqref{FHN_pde_general_quasimonotone} (see \cite{paoNonlinearParabolicElliptic1993} for other boundary conditions). 

\begin{remark}
The coupling between the upper and lower components in
\eqref{system_sub_super_solutions} is essential, since a one-sided upper solution does not generally yield a comparison result for a mixed quasimonotone system. Every piecewise classical solution defines a degenerate barrier by taking the upper and lower components equal; thus, the comparison principle below simultaneously yields uniqueness within the class of piecewise classical solutions. 
Finally, translations and reflections preserve barriers whenever they preserve the coefficients and reaction terms.
\end{remark}

\subsection{The comparison principle}

The following lemma shows that a solution initially trapped between the components of a barrier remains trapped throughout its interval of existence. Related comparison principles for reaction--diffusion systems have been established on bounded domains \cite{paoNonlinearReactiondiffusionSystems1982, sleemanComparisonPrinciplesStrongly1987,luMaximumPrinciplesComparison1993} and unbounded domains \cite{termanComparisonTheoremsReactionDiffusion1982, grindrodComparisonPrinciplesAnalysis1984, tumaComparisonPrinciplesStrongly1988}. The formulation below is adapted to the setting of this paper: it permits nonnegative, possibly vanishing conductivities, together with piecewise smooth barriers and additional integrability assumptions needed for the energy argument. 
Because these properties depend on the underlying existence theory and on the spatial domain, they are stated separately in the following lemma. 
In the explicit ordinary differential equations (ODE) constructions below, however, each relevant conductivity ratio $\sigma_i=b_i/a_i$ is assumed to be either strictly positive or identically zero. 
Throughout the proof, we use the notation $z_+ = \max\{z,0\}$.

\begin{lemma}[Comparison principle]\label{prop_first_energy_estimate}
Let $\Omega=(\omega_1,\omega_2)$, where $-\infty\leq \omega_1<\omega_2\leq+\infty$, and let $T>0$. Assume that $0<m\leq a_i(x)\leq M$, $0\leq b_i(x)\leq M$, $i=1,2$, and the coefficients are piecewise smooth with finitely many jump discontinuities.
Suppose that the reaction term satisfies 
Hypothesis \ref{hyp:mixed-quasimonotone}.

Let $(\bar{v},\bar{w},\underaccent{\bar}{v},\underaccent{\bar}{w})$ be a barrier in $\Omega\times[0,T)$, and let $(v,w)$ be a piecewise classical solution of \eqref{FHN_pde_general_quasimonotone}. Assume that the two solution components and the four barrier components are uniformly bounded on $\Omega\times[0,T)$. Consider the differences
\begin{equation*}
q_1 = v-\bar{v},\quad q_2 =\underaccent{\bar}{w}-w,\quad q_3= \underaccent{\bar}{v}-v, \quad q_4 =w-\bar{w}.
\end{equation*}
Let \( \ell(1)=\ell(3)=1, \ell(2)=\ell(4)=2\). For $i = 1, 2, 3, 4$, assume that:
\begin{enumerate}[label=(A\arabic*),ref=A\arabic*]
\item \label{lemma_assumption_A1} For every \(S<T\) and all sufficiently large \(R>0\) with \( \Omega_R:=\Omega\cap(-R,R)\), one has
\begin{equation*}
q_i \in L^2(0,S;H^1(\Omega_R)), \qquad \partial_t\bigl( a_{\ell(i)} q_i\bigr)\in L^2\Bigl(0,S;\Bigl(H^{1}(\Omega_R)\Bigr)^*\Bigr).
\end{equation*}
\item \label{lemma_assumption_A2} $(q_i)_+ \in C([0,T); L^2(\Omega))$.
\item \label{lemma_assumption_A3} \(
\mathbf 1_{\{q_i >0\}}\partial_x q_i \in L^2(0,S;L^2(\Omega))\) for every $S<T$.
\end{enumerate}
If $\underaccent{\bar}{v}(x,0)\leq  v(x,0) \leq \bar{v}(x,0)$ and $\underaccent{\bar}{w}(x,0)\leq  w(x,0) \leq \bar{w}(x,0)$, then
\begin{equation*}
\underaccent{\bar}{v}(x,t)\leq  v(x,t) \leq \bar{v}(x,t),\qquad \underaccent{\bar}{w}(x,t)\leq  w(x,t) \leq \bar{w}(x,t),
\end{equation*}
for every $(x,t) \in \Omega\times [0,T)$.
\end{lemma}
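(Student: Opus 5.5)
We prove the comparison by a weighted $L^2$ energy estimate on the positive parts $(q_i)_+$ followed by Gronwall's inequality. The weights are $a_{\ell(i)}$, so the time derivative is handled globally.

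Set
\[
E(t)=\sum_{i=1}^4\int_\Omega a_{\ell(i)}\,(q_i)_+^2\,dx ,
\]
which is finite and continuous in $t$ by (A2) and the bound $a_i\le M$. The hypothesis on the initial data gives $E(0)=0$, and we aim to show
\[
E(t)\le C\int_0^tE(s)\,ds .
\]
Gronwall then yields $E\equiv0$. Continuity of the solution and of the barrier upgrades this to pointwise ordering.

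\textbf{Differential inequality for $q_1$.} We subtract the barrier inequality for $\bar v$ from the equation for $v$ in each region $\mathcal Q_k$:
\[
a_1\partial_tq_1-\partial_x(b_1\partial_xq_1)\le a_1\bigl(f_1(v,w,x)-f_1(\bar v,\underaccent{\bar}{w},x)\bigr).
\]
The analogous inequalities hold for $q_2,q_3,q_4$ with the appropriate signs.

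\textbf{Testing.} We test against $(q_1)_+\varphi_R$, where $\varphi_R$ is a smooth cutoff equal to $1$ on $(-R,R)$. The cutoff is needed only at infinite endpoints.
\begin{itemize}[label={}]
\item The time term is handled by the Lions--Magenes chain rule on $\Omega_R$ using (A1). This gives $\tfrac12\frac{d}{dt}\int a_1(q_1)_+^2\varphi_R$ in the distributional sense in $t$, and it requires no splitting at moving interfaces.
\item The diffusion term is integrated by parts piecewise on each $(p_{k-1}(t),p_k(t))$. The interior contribution is $\int b_1\mathbf 1_{\{q_1>0\}}|\partial_xq_1|^2\varphi_R\ge0$, together with a cutoff error $\int b_1\partial_xq_1\,(q_1)_+\varphi_R'$.
\item At each interface the boundary terms combine into $-[b_1\partial_xq_1]_\xi\,(q_1)_+(\xi)$. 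Here $(q_1)_+$ is continuous. Since $[b_1\partial_xv]=0$ and $[b_1\partial_x\bar v]\le0$, we get $[b_1\partial_x q_1]\ge0$, so these terms have the favourable sign.
\item At finite endpoints the Neumann condition combined with the outgoing flux conditions gives the right sign similarly.
\end{itemize}
The cutoff error is bounded by $\|\varphi_R'\|_\infty\,M\,\|\mathbf 1_{\{q_1>0\}}\partial_xq_1\|_{L^2}\,\|(q_1)_+\|_{L^2}$. This is time-integrable by (A2)–(A3) and vanishes as $R\to\infty$ after integrating in time. This is exactly why (A3) is imposed.

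\textbf{Reaction terms.} This is the main obstacle, since the coupling must be arranged so that each difference is controlled only by positive parts. On $\{q_1>0\}$ we split
\[
f_1(v,w)-f_1(\bar v,\underaccent{\bar}{w})=\bigl[f_1(v,w)-f_1(\bar v,w)\bigr]+\bigl[f_1(\bar v,w)-f_1(\bar v,\underaccent{\bar}{w})\bigr].
\]
\begin{itemize}[label={}]
\item The first bracket is at most $L\,q_1$ by the Lipschitz condition, with a single compact set $R$ furnished by the uniform boundedness assumption.
\item For the second bracket, where $w\ge\underaccent{\bar}{w}$ it is $\le0$ by monotonicity (i). Otherwise it is at most $L(\underaccent{\bar}{w}-w)_+=L(q_2)_+$.
\end{itemize}
Hence the reaction contribution is at most $L\,a_1(q_1)_+\bigl((q_1)_++(q_2)_+\bigr)$. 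The same argument applies to the other three components, each using the monotonicity in (i) or (ii):
\begin{itemize}[label={}]
\item for $q_2$, the increment in $f_2$ is bounded by $L(q_3)_++L(q_2)_+$;
\item for $q_3$, by $L(q_3)_++L(q_4)_+$;
\item for $q_4$, by $L(q_1)_++L(q_4)_+$.
\end{itemize}

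\textbf{Conclusion.} Summing the four estimates and applying Young's inequality together with $m\le a_i\le M$ gives $\frac{d}{dt}E\le CE$ after letting $R\to\infty$. We integrate from $0$ and apply Gronwall on each $[0,S]$ with $S<T$, which gives $E\equiv0$.

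The delicate points are checking the sign bookkeeping in each of the four reaction splittings and justifying the piecewise integration by parts near moving interfaces. The latter uses only spatial traces at fixed $t$, so it should be routine once the time derivative is treated globally via (A1).
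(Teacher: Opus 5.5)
Your proposal is correct and follows essentially the same route as the paper: the $a_{\ell(i)}$-weighted energy of the positive parts, a global (Lions--Magenes type) chain rule for the time term, piecewise integration by parts with interface and endpoint terms signed by the transmission and outgoing-flux inequalities, the same four quasimonotone splittings of the reaction terms, a spatial cutoff on unbounded domains, and Gr\"onwall. The global chain rule is legitimate because $q_i$ is continuous across moving interfaces, so the distributional time derivative has no singular part there, which is the point the paper states explicitly. The only deviation is in the cutoff error: you control it by Cauchy--Schwarz using (A2)--(A3) and let $R\to\infty$ before applying Gr\"onwall. The paper instead tests with $\chi_R^2 z_i$, absorbs the cross term into the diffusion term by Young's inequality (so only $z_i\in L^2(0,t;L^2(\Omega))$ is needed, leaving an $O(R^{-2})$ remainder), and applies Gr\"onwall to $E_R$ with an additive error before letting $R\to\infty$; both variants work.
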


\begin{proof}
Define the violation functions \(z_i=(q_i)_+\), $i= 1, \dots ,4$.
Let $C$ denote a generic positive constant that may change from line to line. Define
\[
E(t)
=
\int_\Omega
e(x,t)\,dx, \qquad e(x,t) = 
a_1z_1^2+a_2z_2^2+a_1z_3^2+a_2z_4^2.
\]
Fix $S<T$ and a sufficiently large $R$, set $U=\Omega_R$, and consider one of the differences \(q=q_i\), with \(a=a_{\ell(i)}\). By \ref{lemma_assumption_A1}, \(q\in L^2(0,S;H^1(U))\) and \(\partial_t(aq)\in L^2(0,S;H^1(U)^*)\). The Sobolev truncation theorem gives
\[
q_+\in L^2(0,S;H^1(U)),\qquad
\partial_xq_+=\mathbf 1_{\{q>0\}}\partial_xq
\quad\text{a.e.}
\]
Applying Lemma~\ref{lem:weighted_truncation_identity} yields
\begin{equation}\label{weighted_chain_rule}
\frac12\frac{d}{dt}\int_U a(x)q_+(x,t)^2\,dx
=
\langle \partial_t(aq)(t),q_+(t)\rangle_{H^1(U)^*,H^1(U)}
\end{equation}
for almost every \(t\in(0,S)\). Assumption \ref{lemma_assumption_A2} separately provides the global \(L^2\)-continuity of the violation functions.

If \(\Omega\) is bounded, then \(\Omega_R=\Omega\) for all sufficiently large \(R\). Applying the preceding chain rule to each \(q_i\) shows that $E(t)$ is absolutely continuous in $[0,S]$, for every $S<T$. On an unbounded domain, we will argue using cutoff arguments.

The initial ordering implies \(z_i(\cdot,0)=0\) for \(i=1,\ldots,4\), and hence \(E(0)=0\). By the continuity of the solution and the barrier, the statement of the lemma is equivalent to proving that \(E(t)=0\) for every \(t<T\). This will be obtained as an application of Gr\"onwall's inequality.

\noindent
\textbf{Case $\Omega$ bounded.} 
From the first equation in \eqref{FHN_pde_general_quasimonotone} and the first inequality in \eqref{system_sub_super_solutions} we obtain
\begin{equation}\label{system_difference}
a_1(x)\mathcal{L}_1 (v-\bar{v}) \leq a_1(x) \bigl(f_1(v,w,x) - f_1(\bar{v},\underaccent{\bar}{w},x)\bigr)=: J_1.
\end{equation}
By the global boundedness of the solution and the barrier, there exists $R_0>0$ such that all their components take values in $[-R_0,R_0]$. The uniform local Lipschitz assumption gives a constant $L_{R_0}>0$, independent of $x$, such that
\(
\lvert f_i(r,s,x)-f_i(\tilde r,\tilde s,x)\rvert
\leq
L_{R_0}\bigl(\lvert r-\tilde r\rvert+\lvert s-\tilde s\rvert\bigr)
\) 
whenever all arguments belong to $[-R_0,R_0]$. Hence $z_1 (f_1(v,w,x) - f_1(\bar{v},w,x))\leq  L_{R_0} z_1^2$.
Also, $f_1$ is non-increasing in $w$,
hence
$(f_1(\bar{v},w,x) - f_1(\bar{v},\underaccent{\bar}{w},x)) \leq L_{R_0} (\underaccent{\bar}{w} - w)_+ = L_{R_0} z_2$. This implies
\begin{align*}
z_1 J_1 &= a_1(x) z_1 (f_1(v,w,x) - f_1(\bar{v},\underaccent{\bar}{w},x)) \\
&= a_1(x) z_1 (f_1(v,w,x) - f_1(\bar{v},w,x)+ f_1(\bar{v},w,x)  - f_1(\bar{v},\underaccent{\bar}{w},x)) \\
&\leq a_1(x) L_{R_0} z_1^2 + a_1(x) L_{R_0} z_1 z_2 \\
&\leq \frac{3}{2} L_{R_0} a_1(x) z_1^2 + \frac{M}{2m}L_{R_0} a_2(x) z_2^2 \\
&\leq C_1\bigl(a_1(x) z_1^2 + a_2(x)z_2^2 \bigr).
\end{align*}
Analogous computations apply for each component, using the corresponding quasimonotonicity: the non-in\-creas\-ing dependence of $f_1$ on its second argument and the non-de\-crea\-sing dependence of $f_2$ on its first argument. This gives
\[
\begin{aligned}
a_1(x) z_1 \mathcal{L}_1 q_1 \leq a_1(x) z_1\bigl(
f_1(v,w,x)
-f_1(\bar{v},\underaccent{\bar}{w},x)
\bigr)
&\leq C_1( a_1(x) z_1^2+a_2(x)z_2^2)\leq C  e(x,t),\\
a_2(x) z_2 \mathcal{L}_2 q_2 \leq  a_2(x) z_2\bigl(
f_2(\underaccent{\bar}{v},\underaccent{\bar}{w},x)
-f_2(v,w,x)
\bigr)
&\leq C_2( a_2(x) z_2^2+a_1(x)z_3^2) \leq C  e(x,t),\\
a_1(x) z_3 \mathcal{L}_1 q_3 \leq  a_1(x) z_3\bigl(
f_1(\underaccent{\bar}{v},\bar w,x)
-f_1(v,w,x)
\bigr)
&\leq C_3( a_1(x) z_3^2+a_2(x)z_4^2)\leq C  e(x,t),\\
a_2(x) z_4 \mathcal{L}_2 q_4 \leq  a_2(x) z_4\bigl(
f_2(v,w,x)-f_2(\bar v,\bar w,x)
\bigr)
&\leq C_4( a_2(x) z_4^2+a_1(x)z_1^2)\leq C  e(x,t).
\end{aligned}
\]

We next estimate the left-hand side of \eqref{system_difference}. 
Let \( \omega_1=p_0<p_1(t)<\cdots<p_{N-1}(t)<p_N=\omega_2 \), with $p_k\in C^1([0,T))$, be a partition obtained by taking a common refinement of the partitions in Definitions~\ref{definition_solution} and \ref{defi_barrier}. 
This common refinement includes all fixed solution and coefficient interfaces, as well as all moving barrier interfaces. Set $I_k(t) = (p_{k-1}(t),p_k(t))$ for $k=1, \dots, N$. 

For almost every \(\tau\in(0,t)\), the differential inequality \eqref{system_difference} holds pointwise on each interval \(I_k(\tau)\). Multiplying it by the nonnegative function \(z_1(\tau)\), integrating over each \(I_k(\tau)\), summing over the common refinement, and then integrating in time gives
\begin{equation*}
J_2 + J_3 \leq \int_0^t\int_\Omega J_1z_1\,dx\,d\tau,
\end{equation*}
where
\[
J_2
=
\int_0^t
\sum_{k=1}^N
\int_{I_k(\tau)}
a_1\partial_\tau q_1 z_1
\,dx
\,d\tau,\qquad 
J_3
=
-\int_0^t
\sum_{k=1}^N
\int_{I_k(\tau)}
\partial_x(b_1\partial_x q_1)
z_1\,dx\,d\tau.
\]
We next identify the piecewise expression defining \(J_2\) with the global weighted distributional derivative. In the sense of distributions on \(\Omega\times(0,t)\),
\[
\partial_\tau(a_1q_1)
=
a_1(\partial_\tau q_1)_{\mathrm{pw}}
-
\sum_{k=1}^{N-1}
p_k'(\tau)[a_1q_1]_{p_k(\tau)}
\delta_{p_k(\tau)},
\]
where $(\partial_\tau q_1)_{\mathrm{pw}}$ denotes the piecewise classical time derivative, without the singular contributions supported on the interfaces. The interface terms vanish. Indeed, coefficient and solution interfaces are stationary, whereas at a moving barrier interface \(a_1\) is smooth and \(q_1=v-\bar v\) is continuous. Hence \( p_k'(\tau)[a_1q_1]_{p_k(\tau)}=0 \) at every interface, and therefore \( \partial_\tau(a_1q_1) = a_1(\partial_\tau q_1)_{\mathrm{pw}} \) distributionally. Since \(z_1\in L^2(0,t;H^1(\Omega))\), this distributional identity extends to \(z_1\) by approximation, and the weighted chain rule \eqref{weighted_chain_rule} gives
\begin{equation*}
J_2
=
\int_0^t
\langle
\partial_\tau( a_1 q_1)(\tau),z_1(\tau)
\rangle_{H^1(\Omega)^*,H^1(\Omega)}
\,d\tau
=
\frac12E_1(t)-\frac12E_1(0),
\end{equation*}
where \(
E_1(t)=\int_\Omega a_1z_1(x,t)^2\,dx
\). For $J_3$, we use that for almost every $t$, the function $z_1(t)\in H^1(\Omega)$, which gives a continuous representative in one dimension. The piecewise classical regularity permits integration by parts on each $I_k(t)$ of the common refinement. Additionally, the Sobolev truncation identity $\partial_x z_i = \mathbf 1_{\{q_i >0\}} \partial_x q_i$ a.e. give
\begin{align*}
J_3&= 
- \int_0^t\sum_{k=1}^N \int_{I_k(\tau)}
\partial_x(b_1\partial_x q_1)
z_1\,dx\,d\tau\\
&=\int_0^t\sum_{k=1}^N \int_{I_k(\tau)}
b_1 \partial_x z_1
\partial_x q_1\,dx\,d\tau -
\int_0^t
\sum_{k=1}^N
b_1\partial_x q_1 z_1
\big\vert_{p_{k-1}(\tau)^+}^{p_k(\tau)^-} d\tau\\
&=
\int_0^t \int_\Omega b_1 \lvert\partial_x z_1\rvert^2 dx d\tau
+\int_0^t \sum_{k=1}^{N-1}  z_1(p_k(\tau),\tau) [b_1\partial_x q_1]_{p_k(\tau)} d\tau\\
&\qquad + \int_0^t \bigl(
z_1(p_0,\tau) (b_1 \partial_x q_1)(p_0^+,\tau) -z_1(p_N,\tau) (b_1 \partial_x q_1)(p_N^-,\tau)  \bigr)d\tau.
\end{align*}

All terms in \(J_3\) are nonnegative. The bulk term is nonnegative because \(b_1\geq0\). The endpoint terms have the required signs by the homogeneous Neumann condition for the solution and the outgoing flux inequalities for the barrier. At an interior interface, \[ [b_1\partial_xq_1] = [b_1\partial_xv]-[b_1\partial_x\bar v] = -[b_1\partial_x\bar v] \geq0, \] because the solution flux is continuous across every interface of the common refinement.
Putting everything together gives
\begin{equation*}
\frac{1}{2}E_1(t) - \frac{1}{2} E_1(0) = J_2 \leq J_2 + J_3 \leq \int_0^t \int_\Omega C_1( a_1(x) z_1^2+a_2(x)z_2^2) dx d\tau \leq C \int_0^t E(\tau) \,d\tau.
\end{equation*}
The constant \(C>0\) depends only on the uniform bounds of the solution and barrier, the local Lipschitz constants of \(f_1,f_2\), and the bounds on \(a_1,a_2\). Summing the four estimates, multiplying by $2$, and renaming the constant gives
\(E(t)\leq E(0)+C \int_0^tE(s)\,ds\). Since \(E(0)=0\) and $E(t)$ is continuous, Gr\"onwall's
inequality yields \(E(t)=0\) for every \(t\in[0,T)\). We conclude that each $z_i(\cdot,t)$, $i=1,\dots, 4$, vanishes almost everywhere in $\Omega$. Since the solution and barrier components are continuous, \(z_i(\cdot,t)\) is continuous in space. Hence \(z_i(x,t)=0\) for every \((x,t)\in\Omega\times[0,T)\).

\noindent
\textbf{Case $\Omega$ unbounded.} For an unbounded interval, we localize the preceding estimate. Let $\chi\in C_c^\infty(\mathbb R)$ satisfy $0\leq\chi\leq1$, $\chi\equiv1$ on $[-1,1]$, and $\chi\equiv0$ outside $[-2,2]$, and set $\chi_R(x)=\chi(x/R)$. Then
\[
\lvert \partial_x\chi_R\rvert\leq \frac{C}{R},
\qquad
\operatorname{supp}(\partial_x\chi_R)
\subset\{R\leq\lvert x\rvert \leq2R\}.
\]
Consider the localized energy \(
E_R(t)
=
\int_\Omega\chi_R^2
\bigl(
a_1z_1^2+a_2z_2^2+a_1z_3^2+a_2z_4^2
\bigr)\,dx
\). 
To justify the localized time-chain rule, set \(u_{i,R}=\chi_Rq_i\). Multiplication by \(\chi_R\) is bounded on \(H^1(\Omega_{3R})\), and \ref{lemma_assumption_A1} gives 
\[ u_{i,R}\in L^2(0,S;H^1(\Omega_{3R})), \qquad \partial_t\bigl(a_{\ell(i)}u_{i,R}\bigr) = \chi_R\partial_t\bigl(a_{\ell(i)}q_i\bigr) \in L^2(0,S;H^1(\Omega_{3R})^*). \] 
Here  \( \langle \chi_R\partial_t(a_{\ell(i)} q_i),\varphi \rangle := \langle \partial_t(a_{\ell(i)}q_i),\chi_R\varphi \rangle\), \(\varphi\in H^1(\Omega_{3R})\),
which is well defined be\-cause multiplication by \(\chi_R\) is bounded on \(H^1(\Omega_{3R})\).
Since \(\chi_R\geq0\), one has \((u_{i,R})_+=\chi_Rz_i\). Applying Lemma~\ref{lem:weighted_truncation_identity} to \(u_{i,R}\) gives
\[
\frac12\frac{d}{dt}\int_{\Omega_{3R}}a_{\ell(i)}\chi_R^2z_i^2\,dx
=
\bigl\langle
\partial_t(a_{\ell(i)}q_i),\chi_R^2z_i
\bigr\rangle_{H^1(\Omega_{3R})^*,H^1(\Omega_{3R})}
\]
for almost every \(t\in(0,S)\). Hence \(E_R\) is absolutely continuous on $[0,S]$ for every $S<T$. 

We repeat the bounded-domain estimate with test function \(\chi_R^2z_i\). The reaction terms are unchanged, while each interface term is multiplied by the nonnegative factor \(\chi_R(p_k(t))^2\) and therefore retains its sign. Since \(\chi_R\) is smooth, it produces no additional jump terms. Any finite physical endpoint of \(\Omega\) is treated as in the bounded case. By contrast, the localized test function vanishes near every artificial endpoint of \(\Omega_{3R}\), so no boundary contribution arises there.

The only new terms arise from differentiating the cutoff in the diffusion terms. In the estimate for $E_1(t)$, we obtain an additional term in $J_3$, which can be bounded as follows.
\begin{multline}
\biggl\lvert 2\int_0^t\int_\Omega b_1\chi_R z_1
   \partial_x\chi_R\partial_x z_1\,dx\, d\tau\biggr\rvert \\
   \leq \frac12\int_0^t\int_\Omega b_1\chi_R^2
       \lvert\partial_xz_1\rvert^2\,dx\,d\tau
+\frac{C}{R^2}\int_0^t
 \int_{\{R\leq\lvert x\rvert\leq2R\} \cap \Omega}z_1^2\,dx\, d\tau,
\end{multline}
where we used Young's inequality and the uniform upper bound on $b_1$. The first term is absorbed into the nonnegative diffusion term in $J_3$. Applying the same argument to the other three components, integrating in time, we obtain, for every $t<T$,
\begin{equation}\label{gronwall_unbounded}
E_R(t)
\leq E_R(0)+ 
C\int_0^t E_R(s)\,ds
+ \varepsilon_R(t),\quad \varepsilon_R(t) = \frac{\tilde{C}}{R^2}
 \sum_{i=1}^4
 \int_0^t\int_{\{R\leq\lvert x\rvert\leq2R\} \cap \Omega}
 z_i^2\,dx\,ds,
\end{equation}
where $\tilde{C}>0$ is independent of $R$.
Because $z_i \in L^2(0,t;L^2(\Omega))$, we have
\begin{equation*}
0\leq \varepsilon_R(t) \leq \frac{C}{R^2} \sum_{i=1}^4 \int_0^t \int_\Omega z_i^2 dx ds \to 0,\qquad \text{as }R\to \infty.
\end{equation*}
Next, fix $t<T$. Since $\varepsilon_R(s)\leq\varepsilon_R(t)$ for $0\leq s\leq t$ and $E_R(0)=0$, \eqref{gronwall_unbounded} implies
\begin{equation*}
E_R(s) \leq C\int_0^s E_R(\tau) d\tau + \varepsilon_R(t), \qquad 0\leq s \leq t .
\end{equation*}
Gr\"onwall's inequality therefore gives $E_R(t)\leq e^{C t}\varepsilon_R(t)$ for every $t<T$. All constants in the preceding estimate depend only on the coefficient bounds, the uniform bounds of the solution and barrier components, and the local Lipschitz constants of the reaction terms; in particular, they are independent of $R$. Moreover, because the coefficient and barrier partitions contain only finitely many interfaces, their common refinement is finite on the support of each cutoff. Since $\varepsilon_R(t) \to 0$ as $R \to \infty$, we conclude that $E_R(t)\to 0$ as $R \to \infty$ for every fixed $t<T$. On the other hand, $\chi_R\to1$ pointwise and $0\leq\chi_R\leq1$. For each fixed $t\in [0,T)$, we have
\begin{equation*}
0\leq \chi_R^2\Bigl(a_1 z_1^2 +a_2 z_2^2+a_1 z_3^2+a_2 z_4^2\Bigr)\leq M \sum_{i=1}^4 z_i^2\in L^1(\Omega).
\end{equation*}
The dominated convergence theorem therefore yields \(E_R(t)\to E(t)\) as \(R\to\infty\). Consequently, $E(t)=0$ for every $t<T$, and the conclusion follows as in the bounded case.
\end{proof}

\subsection{Existence and boundedness of solutions}

Lemma \ref{prop_first_energy_estimate} separates the comparison argument from the existence and global boundedness of solutions. This is convenient here because the comparison argument can be formulated directly for discontinuous coefficients and barriers satisfying transmission inequalities, without requiring a unified existence theory for all such problems.

For FitzHugh--Nagumo systems, one standard approach combines local well-posedness with invariant-region estimates. The latter provide uniform bounds and permit continuation of the local solution. This strategy was developed for constant-coefficient systems on unbounded domains in \cite{Rauch1978}; related results for bounded domains and piecewise-constant diffusion appear in \cite{schonbekBoundaryValueProblems1978, pauwelussenExistenceUniquenessNonlinear1981}. More recent variants address heterogeneous nonlinearities \cite{cerpa_approximation_2026} and problems on the half-line with time-dependent Neumann data \cite{rodriguezAnalysisStabilityFitzHughNagumo}. For the standard FitzHugh--Nagumo nonlinearity, the dominant cubic term supplies the constant upper and lower bounds needed in the invariant-region argument.

A second approach constructs solutions through monotone iteration between ordered upper and lower solutions \cite{amannExistencePositiveSolutions1971, sattingerMonotoneMethodsNonlinear1972}. For quasimonotone reaction--diffusion systems, it yields existence and comparison results simultaneously under suitable regularity assumptions \cite{paoNonlinearReactiondiffusionSystems1982, paoNonlinearParabolicElliptic1993}; related comparison arguments on unbounded domains appear in \cite{termanComparisonTheoremsReactionDiffusion1982}. The linear theory for parabolic transmission problems provides the analytic framework required to treat broader classes of discontinuous coefficients; see \cite{ladyzenskajaLinearQuasilinearEquations1968}.

\begin{remark}
The regularity assumptions in Lemma \ref{prop_first_energy_estimate} are imposed in addition to the pointwise and interface regularity in Definitions \ref{definition_solution} and \ref{defi_barrier}. The local $H^1$ assumption on the differences is used through the Sobolev truncation identity 
\[ \partial_x(q_i)_+ = \mathbf 1_{\{q_i>0\}}\partial_xq_i, \]
which makes the positive parts admissible in the energy estimate. 

If the corresponding conductivity is uniformly positive, this regularity is consistent with standard parabolic energy estimates, provided the initial data and forcing have the required regularity. If the conductivity vanishes identically, diffusion does not generate spatial regularity. Spatial regularity may nevertheless be propagated from the initial data by constructing the coupled solution in a common phase space $B\times B$, for example with $B=H^1(\Omega)$ or, in one dimension, $B=W^{k,p}(\Omega)$ with $kp>1$; see \cite{Rauch1978,cerpa_approximation_2026}. 

The global $L^2$ assumptions concern only the ordering violations, not the barrier components themselves. This permits front-like barriers that approach nonzero limits at infinity. For the stationary barriers constructed below, and for the almost stationary barriers by their temporal monotonicity, the initial ordering gives 
\[ 
\begin{aligned} 
(v-\bar v)_+&\leq(v-v_0)_+, & (\underaccent{\bar}{v}-v)_+&\leq(v_0-v)_+,\\ 
(w-\bar w)_+&\leq(w-w_0)_+, & (\underaccent{\bar}{w}-w)_+&\leq(w_0-w)_+. 
\end{aligned} 
\]
Hence $L^2$ control of $(v(t)-v_0,w(t)-w_0)$ implies pointwise-in-time $L^2$ control of the four violations. 

Finally, if 
\[ 
q_i\in L^2(0,T;H^1(\Omega)),\qquad \partial_t (a_{\ell(i)}q_i) \in L^2(0,T;H^1(\Omega)^*), 
\] 
then the weighted Lions--Magenes result gives a representative which is continuous in $L^2(\Omega,a_{\ell(i)}(x)\,dx)$. Since $m\leq a_{\ell(i)}\leq M$, this is equivalent to continuity in $L^2(\Omega)$.
\end{remark}

\subsection{Directional blocking}

\begin{defi}[Directional blocking]\label{defi_directional_blocking}
Let \((v,w)\) be a global solution of \eqref{FHN_pde_general}.
We say that the solution $(v,w)$ is directionally blocked toward $-\infty$ at threshold $\theta>0$ if there exists \(x_\theta\in\mathbb R\)
such that
\[
\lvert v(x,t)\rvert \leq \theta, 
\qquad
\lvert w(x,t)\rvert\leq \theta
\]
for every $x\leq x_\theta$, $t\geq0$.
\end{defi}

The definition is intentionally asymmetric in space. It excludes invasion toward $-\infty$ but does not prescribe the behavior toward $+\infty$.

\begin{corollary}[Directional blocking by a stationary barrier]
\label{cor_directional_blocking}
Let \((v,w)\) be a global solution of \eqref{FHN_pde_general}
whose initial data are trapped between a stationary barrier $(\bar v,\bar w,\underaccent{\bar}{v} ,\underaccent{\bar}{w})$ satisfying the assumptions of Lemma
\ref{prop_first_energy_estimate}. Suppose, in addition, that
\[
\lim_{x\to-\infty}
\bigl(
\lvert \bar v(x)\rvert+
\lvert \underaccent{\bar}{v}(x)\rvert+
\lvert \bar w(x)\rvert+
\lvert \underaccent{\bar}{w}(x)\rvert
\bigr)=0.
\]
Then $(v,w)$ is directionally blocked toward $-\infty$ at every threshold.
\end{corollary}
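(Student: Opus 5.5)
The plan is to apply Lemma \ref{prop_first_energy_estimate} directly and then use the decay of the barrier at $-\infty$.

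First, observe that \eqref{FHN_pde_general} is an instance of \eqref{FHN_pde_general_quasimonotone}. Take $a_1=a$, $b_1=b$, $f_1(v,w,x)=f(v,x)-w$ for the activator equation. Divide the recovery equation by $\tau$ and take $a_2=\tau$, $b_2=D$, $f_2(v,w,x)=(v-\gamma w)/\tau$. Then $f_1$ is non-increasing in $w$ and $f_2$ is non-decreasing in $v$, so Hypothesis \ref{hyp:mixed-quasimonotone} holds, with the uniform Lipschitz bound inherited from the assumptions on $f$.

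Since the stationary barrier is assumed to satisfy the hypotheses of Lemma \ref{prop_first_energy_estimate}, including boundedness and (A1)--(A3), and the initial data are trapped, the lemma gives
\[
\underaccent{\bar}{v}(x)\leq v(x,t)\leq \bar v(x),\qquad \underaccent{\bar}{w}(x)\leq w(x,t)\leq \bar w(x)
\]
for all $x\in\mathbb R$ and all $t\in[0,T)$, for every $T$. Because the solution is global, the ordering holds for all $t\geq0$. The comparison lemma is stated on $[0,T)$ with $T$ finite, so we apply it for each $T>0$ and take the union.

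Next we turn the ordering into absolute-value bounds. From the two-sided inequalities,
\[
|v(x,t)|\leq \max\{|\bar v(x)|,|\underaccent{\bar}{v}(x)|\}\leq |\bar v(x)|+|\underaccent{\bar}{v}(x)|,
\]
and similarly
\[
|w(x,t)|\leq |\bar w(x)|+|\underaccent{\bar}{w}(x)|.
\]
This holds because any number lying between two reals has absolute value at most the larger of their absolute values.

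Finally, fix $\theta>0$. By the assumed limit, there is $x_\theta$ such that the sum of the four barrier moduli is at most $\theta$ for all $x\leq x_\theta$. Since this bound does not depend on $t$, it gives $|v|,|w|\leq\theta$ for all $x\leq x_\theta$ and $t\geq 0$, which is exactly Definition \ref{defi_directional_blocking}.

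There is no substantial obstacle in this argument. The only point needing care is checking that the rescaled recovery equation fits the framework, in particular that $a_2=\tau$ and $b_2=D$ meet the coefficient bounds; where $D$ is spatially dependent this uses the assumed bounds on $D$.
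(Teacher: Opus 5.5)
Your proof is correct and follows the paper's argument: apply Lemma~\ref{prop_first_energy_estimate} on each finite interval $[0,T)$ with $T$ arbitrary, then combine the resulting ordering with the decay of the barrier at $-\infty$ to get $\lvert v\rvert,\lvert w\rvert\leq\theta$ for $x\leq x_\theta$ and all $t\geq0$. One small aside, which does not affect the corollary since its hypotheses already presuppose the framework: for a spatially varying $D$, your choice $a_2=\tau$, $b_2=D$ produces $\partial_x(D\partial_x w)$ rather than $D\partial_x^2 w$, so the faithful embedding is $a_2=\tau/D(x)$, $b_2=1$, which matches the transmission condition $[\partial_x W]_\xi\leq0$ used in the proof of Theorem~\ref{thm_double_diffusive}.
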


\begin{proof}
Fix $\theta>0$. By the decay of the stationary barrier as $x\to-\infty$, there exists $x_\theta\in\mathbb R$ such that
\(\lvert \bar v(x)\rvert\), 
\(\lvert \underaccent{\bar}{v}(x)\rvert\), 
\(\lvert \bar w(x)\rvert\), 
\(\lvert \underaccent{\bar}{w}(x)\rvert\leq\theta\) 
for all \(x\leq x_\theta\).
For arbitrary $T>0$, Lemma \ref{prop_first_energy_estimate} gives
\(
\underaccent{\bar}{v}(x)\leq v(x,t)\leq\bar v(x)\),
\(
\underaccent{\bar}{w}(x)\leq w(x,t)\leq\bar w(x)
\) for $0\leq t\leq T$. Hence
\(\lvert v(x,t)\rvert\leq\theta\), \(\lvert w(x,t)\rvert\leq\theta\)
whenever $x\leq x_\theta$ and $0\leq t\leq T$.
Since the estimate is valid for every $T>0$, the conclusion follows for all $t\geq0$.
\end{proof}

\subsection{Stationary barriers in heterogeneous media}
The main results of this paper concern the construction of barriers adapted to heterogeneities in the nonlinearity and the diffusion. The results below are intended to be combined with Lemma \ref{prop_first_energy_estimate}. For applications, it is desirable that a barrier encompass a broad family of initial conditions and possess a meaningful asymptotic structure. Thus, whenever a barrier is constructed, every global solution satisfying the assumptions of Lemma \ref{prop_first_energy_estimate} and whose initial data are trapped between the barrier components remains trapped for all time. If, in addition, the barrier converges to zero as $x\to-\infty$, Corollary \ref{cor_directional_blocking} yields directional blocking.

We consider heterogeneities representing qualitatively distinct blocking mechanisms, and the constructions extend to related configurations with minor modifications. Cases \ref{thm_heterogeneity_type1} and \ref{thm_heterogeneity_type2} are related to localized depressed excitability, while \ref{thm_heterogeneity_type3}-\ref{thm_heterogeneity_type5} concern abrupt changes in the geometry of a neuron's axon. The particular heterogeneities considered here are partially motivated by previous results in the literature \cite{sneyd_propagation_1997,lewis_wave-block_2000} for \ref{thm_heterogeneity_type2}, and \cite{pauwelussenNerveImpulsePropagation1981,pauwelussenOneWayTraffic1982,ikedaPropagationExcitedState1989,ikedaStabilityAnalysisStationary1991} for \ref{thm_heterogeneity_type3}. Our proofs provide new insights into the nature of the failure of propagation.

Another important aspect of the construction of barriers is that, because of the additional variables and the inequality relations, system \eqref{system_sub_super_solutions} admits more general solutions than \eqref{FHN_pde_general_quasimonotone}. More specifically, we seek front-like barriers for \eqref{system_sub_super_solutions}, even when profiles with this asymptotic structure are not admissible solutions for \eqref{FHN_pde_general_quasimonotone} or \eqref{FHN_pde_general}.

\begin{theorem}[Stationary barriers in heterogeneous media]
\label{thm_barrier_heterogeneous}
Let $\gamma>0$, $D=0$, and let $f_0:\mathbb R\to\mathbb R$ be locally Lipschitz continuous, differentiable in a neighborhood of zero, and satisfy \eqref{condition_H1}. Assume that one of the conditions \ref{thm_heterogeneity_type1}--
\ref{thm_heterogeneity_type5} holds.

\begin{enumerate}[label=(\roman*)]
\item $a(x)$, $b(x)$ are constant positive functions, $f(v,x) = f_0(v)$ for $x\in [0, L]^c$, and $f(v,x) = f_0(v) + p g(v)$ for $x\in [0,L]$, where \(g:\mathbb R\to\mathbb R\) is locally Lipschitz and $vg(v) < 0$ for $v\neq 0$.\label{thm_heterogeneity_type1}
\item $a(x)$, $b(x)$ are constant positive functions, $L = p$, $f(v,x) = f_0(v)$ for $x\in [0,L]^c$, and $\min\{-f(v,x),$ $f(-v,x)\}\geq v/\gamma$ for every $v\geq 0$ and $x\in [0,L]$.\label{thm_heterogeneity_type2}
\item $a(x) = r(x)$, $b(x) = r(x)^2$ where $r(x) = r_0 >0$ for $x\leq 0$ and $r(x)= \frac{1}{1+p} r_0$ for $x> 0$. $f(v,x) = f_0(v).$\label{thm_heterogeneity_type3}
\item $a(x) = r(x)$, $b(x) = r(x)^2$ where $r(x)=r_0>0$ for $x\in [0,L]^c$ and $r(x) = \frac{1}{1+ p}r_0$ for $x\in[0,L]$. $f(v,x) = f_0(v).$\label{thm_heterogeneity_type4}
\item $a(x) = r(x)$, $b(x) = r(x)^2$ where $r(x)= r_0>0$ for $x\in [0,L]^c $ and $r(x)  = (1+p)r_0$ for $x\in[0,L]$. $f(v,x) = f_0(v).$\label{thm_heterogeneity_type5}
\end{enumerate}

For each of these cases, there exists a threshold $p^*\geq0$, depending on the fixed data of that case ($f_0$, $\gamma$, $a$, $b$, $g$, or $r_0$) with the following property. For every $K_0>0$ and every $p\geq p^*$, the remaining geometric parameters can be chosen as specified in the corresponding case: $L$ is chosen in cases \ref{thm_heterogeneity_type1}, \ref{thm_heterogeneity_type4}, and \ref{thm_heterogeneity_type5}, while $L=p$ in \ref{thm_heterogeneity_type2}. There is no additional geometric parameter in case \ref{thm_heterogeneity_type3}. With these choices, for every $\tau>0$, system \eqref{FHN_pde_general} admits a stationary barrier of the form
\[
(\bar v,\bar w,\underaccent{\bar}{v},\underaccent{\bar}{w})
=
\Biggl(
V,\frac{V}{\gamma},-V,-\frac{V}{\gamma}
\Biggr).
\]
The profile $V$ is strictly positive and nondecreasing, and there exist $K\geq K_0$ and $x_0\in\mathbb R$ such that \(
V(x)=K\) for all \(x\geq x_0\). The profile may be chosen so that \(V(x)\to 0\) exponentially as \(x\to-\infty\). Alternatively, for every sufficiently small $V_->0$, it may be chosen
so that \(V(x)=V_-\) for all sufficiently negative $x$.
\end{theorem}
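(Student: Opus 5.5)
By Lemma \ref{lem:symmetric_stationary_barriers}, everything reduces to a scalar problem. We must construct a continuous, piecewise $C^2$, positive, nondecreasing profile $V$ that satisfies the differential inequality \eqref{system_trapping_reduced} on each smooth piece and the jump condition $[b\partial_x V]_\xi\le 0$ at each interface. Write $h(v,x)=\min\{-f(v,x),f(-v,x)\}-v/\gamma$; for $x$ outside the heterogeneity set $h_0(v)=\min\{-f_0(v),f_0(-v)\}-v/\gamma$. Conditions \eqref{condition_H1} give two facts about $h_0$. By (i)--(ii) and differentiability at zero, $h_0(v)\ge \kappa v$ for $0\le v\le \delta$, with $\kappa=\min\{-f_0'(0),f_0'(0)\}$ adjusted appropriately; more precisely, $-f_0(v)\ge (-f_0'(0)-\epsilon)v$ and $f_0(-v)\ge(-f_0'(0)-\epsilon)v$, so $\kappa=-f_0'(0)-1/\gamma-\epsilon>0$. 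By (iii), $h_0(v)\ge 0$ for $v\ge v_0$. In the intermediate range $[\delta,\max(v_0,K_0)]$, however, $h_0$ may be negative, with a lower bound $-H$ coming from local Lipschitz continuity. The profile is therefore built in three pieces:
\begin{itemize}
\item a left tail where $V$ is small;
\item a transition piece in which $V$ climbs from $\delta$ to $K\ge\max(v_0,K_0)$;
\item the constant $V\equiv K$ on the right, which is admissible because $h_0(K)\ge0$ there.
\end{itemize}

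\textbf{Left tail.} With $\sigma=b/a$, take $V=\delta e^{\mu(x-x_1)}$ for $x\le x_1$, where $\sigma\mu^2\le\kappa$. Then $\sigma V''=\sigma\mu^2V\le \kappa V\le h_0(V)$, which gives exponential decay. For the variant $V\equiv V_-$ on a left half-line, use $h_0(V_-)\ge\kappa V_->0$ and a constant piece. Glue it to an exponential-type piece through a kink with $[V']\le0$: the constant on the left (slope $0$) meets an increasing piece on the right, so this kink actually needs care. Instead, take $V=V_-+c\,(\cosh(\mu(x-x_2))-1)$ for $x\ge x_2$; this is $C^1$ at $x_2$ and still satisfies the inequality for small $c$, since $V''=c\mu^2\cosh\le\kappa V$ when $V$ stays below $\delta$. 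The main structural principle is that upward kinks are forbidden, whereas downward kinks (convex-to-concave transitions) are free.

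\textbf{Transition piece.} This is where the heterogeneity is used, and it is the main obstacle, because in the homogeneous medium one cannot climb through the region where $h_0<0$. On that region we need $\sigma V''\le -H$, so $V$ must be concave there. A concave increasing piece has decreasing slope, so it can reach $K$ with slope zero (a downward jump in slope is then allowed). The difficulty is arriving at $V=\delta$ with enough slope: the tail has slope only $\mu\delta$, with $\mu\lesssim\sqrt{\kappa/\sigma}$. The required slope at $V=\delta$ to climb $K-\delta$ against curvature $-H/\sigma$ is $s\approx\sqrt{2H(K-\delta)/\sigma}$, which exceeds $\mu\delta$. Each case supplies the missing slope differently.
\begin{itemize}
\item \textbf{Case (i).} On $[0,L]$, $h=h_0-pg$ terms give $h(v,x)\ge h_0(v)+p\,c(v)$ with $c>0$ for $v\neq0$. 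For $p$ large we can take $V$ strongly convex on $[0,L]$ (say $V$ solving $\sigma V''=\kappa' V$ with large $\kappa'$), so the slope grows exponentially; we choose $L$ (and $p^*$) so that at the exit point the slope exceeds $s$. I expect the region $v\in[\delta,\max]$ to need $p\min_{[\delta,K]} |g|\ge H$ in order to keep growing, so $K$ must be fixed first and $p^*$ depends on it. Since $p^*$ must not depend on $K_0$, I would instead climb all the way to $K$ inside $[0,L]$ using the large reaction, taking $L$ large; the requirement becomes $pc(v)\ge H+\sigma V''/a$, and choosing $V''$ small and $L$ long handles this.
\item \textbf{Case (ii).} Inside $[0,L]$ the inequality holds for any $V$ with $\sigma V''\le0$, so we climb linearly with slope $K/L$ over the long interval $L=p$ and finish with concave pieces outside.
\item \textbf{Cases (iii)--(v).} The conductance jump yields a slope jump: $[b V']\le0$ allows $V'(0^+)=(1+p)^2V'(0^-)$ when $b$ drops by the factor $(1+p)^{-2}$. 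For $p$ large, the slope after the interface is arbitrarily large, which supplies $s$. In case (iv) the narrow region lets us climb on $[0,L]$ and then re-enter the wide region at $x=L$, where the flux must not increase: $r_0^2V'(L^+)\le (r_0/(1+p))^2V'(L^-)$. This is compatible if we reach $K$ before $L$ (slope $0$) or choose $L$ accordingly. Case (v) is the mirror image: we use the wide region, where the flux drops upon exiting.
\end{itemize}

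\textbf{Verification.} In each case we check the finitely many interface inequalities and that the pieces satisfy the differential inequality. Lemma \ref{lem:symmetric_stationary_barriers} then shows that $(V,V/\gamma,-V,-V/\gamma)$ is a stationary barrier, and this holds for every $\tau$ since the stationary recovery equation with $D=0$ is independent of $\tau$.
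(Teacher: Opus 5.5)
Your architecture (reduction via Lemma~\ref{lem:symmetric_stationary_barriers}, a small tail, a heterogeneity-driven climb, a constant cap, with downward kinks free) matches the paper's, and your tails, including the $\cosh$ variant for $V_-$, are valid. The genuine gap is that you do not secure the independence of $p^*$ from $K_0$, which the statement requires.

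Your slope budget $s\approx\sqrt{2H(K-\delta)/\sigma}$ charges the deficit of $h_0$ over all of $[\delta,K]$, so every threshold you derive grows with $K$. In case (ii) this is explicit. Since $b$ is constant, the slope on $[0,p]$ cannot exceed the incoming tail slope $\mu\delta$, and it cannot increase inside because only $h\ge0$ is assumed there. Hence a linear climb to $K$ inside $[0,p]$ forces $p\ge (K-\delta)/(\mu\delta)$. The same problem arises in (iii)--(iv) whenever the amplified slope must ``supply $s$''.

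The missing ingredient is the energy identity for $\sigma V''=h_0(V)$ (your $h_0$, which contains $-v/\gamma$):
$\tfrac12V'(x)^2=\tfrac12V'(x_1)^2+\sigma^{-1}\int_{V(x_1)}^{V(x)}h_0$.
Combine it with condition (iii). Since $h_0\ge0$ on $[v_0,\infty)$, one has $\int_{z_1}^{z_2}h_0\ge-\mathcal H$ for all $0\le z_1\le z_2$, where $\mathcal H:=-\inf_{0\le z_1\le z_2\le v_0}\int_{z_1}^{z_2}h_0$ does not depend on $K$. So the heterogeneity only has to do one of two things:
\begin{itemize}
\item lift the profile to $v_0$ with positive slope, or
\item release it into the next homogeneous region with slope exceeding $\sqrt{2\mathcal H/\sigma_3}$.
\end{itemize}
After that, the profile reaches any $K$; above $v_0$ a linear continuation already works. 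In (ii) this yields $p^*=\max\{(v_0-\delta)/(\mu\delta),0\}$. In (i), your linear-climb workaround is correct only once you note that $p\,c(v)\ge -h_0(v)$ is needed just on $[\delta,v_0]$. Otherwise $\min_{[\delta,K]}c$ may tend to $0$ as $K\to\infty$.

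Case (v) is also not a ``mirror image'' of (iv) in any sense that transfers the argument. At $x=0$ the flux condition divides the slope by $(1+p)^2$, and on $[0,L]$ one has $\sigma_2=(1+p)r_0$. You must therefore show that the profile still crosses the wide region and that the amplification at $x=L$ wins. The paper climbs inside $[0,L]$ only between two levels $k<\tilde k$ at which $h_0>0$. The piece is then convex, its slope stays above its initial value, and $L$ (the point where $\tilde k$ is reached) is finite. The energy identity gives $V'(L^+)^2=V'(0^-)^2+\tfrac{2(1+p)^3}{r_0}\int_k^{\tilde k}h_0$, which exceeds $2\mathcal H/r_0$ for large $p$.

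Similarly, in (iii) the post-interface ratio is $\sigma_3=r_0/(1+p)$, so the required slope $\sqrt{2\mathcal H(1+p)/r_0}$ itself grows with $p$. It is beaten by $(1+p)^2V'(0^-)$. This comparison, made with the $K$-independent $\mathcal H$, is what yields a threshold independent of $K_0$.
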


The proof is presented in Section \ref{section_barrier_heterogeneous}, and numerical experiments illustrating the results are presented in Section \ref{section_numerical_experiments}. In the statement of the theorem, the threshold \(p^*\) is independent of \(K_0\). 

\begin{proof}[Sketch of the proof of Theorem
\ref{thm_barrier_heterogeneous}]
By Lemma~\ref{lem:symmetric_stationary_barriers}, it suffices to
construct a nonnegative, nondecreasing profile \(V\) satisfying
\eqref{system_trapping_reduced}, together with the transmission
inequality \eqref{transmission_condition_symmetric} at every interface.
We construct \(V\) from left to right. On \((-\infty,0]\), a homogeneous
tail raises the profile from a sufficiently small nonnegative level to
a fixed height \(k>0\), determined by $f_0$ and $\gamma$, with \(V(0)=k\) and \(V'(0^-)>0\). Across the
heterogeneous interval, we consider a maximal increasing solution
\(v_2\) of
\begin{empheq}[left=\empheqlbrace]{equation}\label{eq:heterogeneous_sketch}
\begin{aligned}
&\frac{1}{a(x)}
    \bigl(b(x)v_2'(x)\bigr)'
    \geq
    \min\{-f(v_2(x),x),f(-v_2(x),x)\}
    -\frac{v_2(x)}{\gamma},
    &&x\in(0,L),\\
&v_2(0)=k,
    \qquad
    b(0^+)v_2'(0^+)=b(0^-)V'(0^-),
\end{aligned}
\end{empheq}
subject to \(v_2>0\) and \(v_2'>0\). Matching the fluxes at \(x=0\) ensures that the transmission inequality \eqref{transmission_condition_symmetric} holds there with equality. The heterogeneous construction leads to one of two alternatives:
\begin{enumerate}[label=(\alph*)]
\item\label{case:sketch_reaches_favorable}
the profile reaches the favorable height \(v_2(L^-) \geq v_0\) with \(v_2'(L^-)>0\);
\item\label{case:sketch_large_slope}
the profile leaves the heterogeneity with a sufficiently large outgoing slope, \[ V'(L^+)\geq \frac{M}{\sqrt{\sigma_3}}, \] where \(L=0\) in case~\ref{thm_heterogeneity_type3}, 
\(\sigma_3=b/a\) in the subsequent homogeneous region, 
and \(M>0\) is chosen so that the subsequent homogeneous continuation remains increasing until it reaches \(v_0\).
\end{enumerate}
Cases~\ref{thm_heterogeneity_type2} and
\ref{thm_heterogeneity_type4} yield
alternative~\ref{case:sketch_reaches_favorable}, whereas
cases~\ref{thm_heterogeneity_type1},
\ref{thm_heterogeneity_type3}, and
\ref{thm_heterogeneity_type5} yield
alternative~\ref{case:sketch_large_slope}.

In the first alternative, the profile enters the homogeneous continuation at or above \(v_0\) with positive slope. In the second, its outgoing slope is large enough for the homogeneous continuation to remain increasing until it reaches \(v_0\). In both cases, it subsequently reaches a height \(K\geq K_0\) and is extended constantly to the right. Joining the pieces so that the corresponding transmission inequalities hold produces the profile required by
Lemma~\ref{lem:symmetric_stationary_barriers}, and hence the stationary
barrier asserted in the theorem.
\end{proof}

\begin{remark}
The stationary barriers provided by Theorem \ref{thm_barrier_heterogeneous} are independent of $\tau$ since the stationary recovery equation with $D=0$ reduces to $v- \gamma w = 0$, so $\tau$ disappears. In the double-diffusive case ($D>0$), the parameter $\tau$ is known to be related to the stability of the stationary pulse solutions \cite{chenStandingPulseSolutions2012,chen_stability_2014}.    
\end{remark}

The construction can also be adapted to sufficiently large bounded domains with homogeneous Neumann boundary conditions by choosing a barrier that is constant for sufficiently negative $x$. The left endpoint may then be placed in this constant region. Since the barrier is also constant for sufficiently large positive $x$, the right endpoint may be treated in the same way.

The assumption that the system is piecewise homogeneous outside the heterogeneous region is mainly used to simplify the presentation, since the focus is to understand how localized or abrupt effects induce obstructions to propagation. The structure of the proof should also permit extensions to more general media under suitable boundedness and regularity conditions.

The following result concerns failure of propagation in the double-diffusive case. This result uses a stationary barrier constructed for the single-diffusive case and argues that the same barrier applies to problems with small recovery diffusion $D>0$. Note that the assumptions of Theorem \ref{thm_double_diffusive} are compatible with cases \ref{thm_heterogeneity_type1} and \ref{thm_heterogeneity_type2} in Theorem~\ref{thm_barrier_heterogeneous}.

\begin{theorem}[Barrier for a heterogeneous double-diffusive system]\label{thm_double_diffusive}
Let $\gamma>0$ and let $\tau>0$ be arbitrary. Let $0<m\leq a(x),\delta(x)\leq M$ be piecewise smooth with finitely many jump discontinuities, and $b(x)\equiv b_0 >0$. Let $0<\tilde\gamma < \gamma$, and let $f:\mathbb R\times\mathbb R\to\mathbb R$ be locally Lipschitz continuous in its first argument, uniformly in its second argument. Assume also that $f(0,x)=0$ for every $x\in\mathbb R$.

Assume that system \eqref{FHN_pde_general} with $D = 0$ and $\tilde\gamma$ in place of $\gamma$ admits a stationary barrier of the form $(V, V/\tilde{\gamma},$ $-V, -V/\tilde{\gamma})$, where $0\leq V \leq K$. 
Then there exists $d^*>0$, depending only on $K$, $f$, and $(\gamma-\tilde{\gamma})$, such that, if
\[\frac{a(x)\delta(x)}{b_0}\leq d^*
\qquad\text{for every }x\in\mathbb R,
\]
then the same tuple is a stationary barrier for system \eqref{FHN_pde_general} with its original coefficient $\gamma$ and with recovery-diffusion coefficient $D(x)=\delta(x)$.

\end{theorem}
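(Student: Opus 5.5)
The plan is to verify the barrier inequalities of Definition~\ref{defi_barrier} directly for the tuple $(V,V/\tilde\gamma,-V,-V/\tilde\gamma)$ in the double-diffusive system. First I rewrite \eqref{FHN_pde_general} in the form \eqref{FHN_pde_general_quasimonotone}. The activator keeps $a_1=a$, $b_1=b_0$ and $f_1(v,w,x)=f(v,x)-w$. For the recovery equation $\tau\partial_t w-\partial_x(\delta\partial_x w)=v-\gamma w$ (the divergence form is the natural reading when $\delta$ jumps), I take $a_2=\tau$, $b_2=\delta$ and $f_2=(v-\gamma w)/\tau$. The key observation is that $\gamma$ and $\delta$ enter only the recovery equation. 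So the two activator inequalities in \eqref{system_sub_super_solutions} are literally the same as in the $D=0$, $\tilde\gamma$ problem, because they involve only $f$, $V$ and $w$-components equal to $\pm V/\tilde\gamma$. The same holds for the activator transmission inequalities \eqref{transmission_condition} and the activator endpoint conditions. All of these are inherited from the hypothesis.

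Next I check the recovery inequalities. By stationarity, the upper one reads
\[
-\partial_x\bigl(\delta\,\partial_x (V/\tilde\gamma)\bigr)\geq V-\gamma V/\tilde\gamma ,
\qquad\text{i.e.}\qquad
\partial_x(\delta\,\partial_x V)\leq(\gamma-\tilde\gamma)V
\]
on each smooth piece. The lower one, with $(-V,-V/\tilde\gamma)$, reduces to exactly the same inequality by oddness. To control the left side I use the activator inequality from the hypothesis, in the reduced form \eqref{system_trapping_reduced} of Lemma~\ref{lem:symmetric_stationary_barriers} with $b=b_0$:
\[
\frac{b_0}{a}V''\leq\min\{-f(V,x),f(-V,x)\}-\frac{V}{\tilde\gamma}\leq L_KV .
\]
Here $L_K$ is the uniform Lipschitz constant of $f$ on $[-K,K]$. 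The last step uses $f(0,x)=0$, which gives $-f(V,x)\leq L_KV$ for $0\leq V\leq K$, and drops the negative term $-V/\tilde\gamma$.

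The step I expect to be the main obstacle is the derivative term $\delta' V'$ that appears when $\delta$ is merely piecewise smooth. On pieces where $\delta$ is constant, multiplying by $\delta\geq0$ gives $\delta V''\leq (a\delta/b_0)L_KV\leq d^*L_KV$ wherever $V''\geq0$; where $V''<0$ the inequality is trivial because $V\geq0$. Choosing $d^*=(\gamma-\tilde\gamma)/L_K$, which depends only on $K$, $f$ and $\gamma-\tilde\gamma$, closes the estimate. For a genuinely varying $\delta$ I would first try one of two remedies. The first is to read the recovery diffusion as $\delta\,\partial_x^2 w$, i.e.\ $a_2=\tau/\delta$ and $b_2=1$, which removes $\delta'$. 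The second is to use that the profiles of Theorem~\ref{thm_barrier_heterogeneous} are nondecreasing, and to treat the $\delta'V'$ contribution by refining the partition so that it is absorbed into the same slack $(\gamma-\tilde\gamma)V$. I expect the first reading to be the intended one.

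Finally I check the interfaces and endpoints of the recovery components. With either reading, the recovery transmission inequality for $\bar w$ amounts to $[\delta V']_\xi\leq0$, and its $\delta$-free version is $[V']_\xi\leq0$. Since $b\equiv b_0$ is constant, the activator transmission condition \eqref{transmission_condition_symmetric} gives $b_0[V']_\xi\leq0$. This is the reason the statement assumes constant $b$. At interfaces where $\delta$ is continuous (or under the $\delta\,\partial_x^2$ reading), this yields $[V']_\xi\leq0$ and hence the required sign; the inequality for $\underaccent{\bar}{w}=-V/\tilde\gamma$ follows by the sign flip. The endpoint flux conditions transfer in the same way, and the ordering $-V\leq V$, $-V/\tilde\gamma\leq V/\tilde\gamma$ is immediate from $V\geq0$.
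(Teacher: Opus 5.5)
Under the $\delta\,\partial_x^2 w$ reading, which is the literal form of \eqref{FHN_pde_general} and the one the paper uses (with $b_2\equiv1$), your argument is correct and is essentially the paper's proof. Both use $f(0,x)=0$ to bound the activator right-hand side by a multiple of $V$, multiply by $a\delta/b_0\leq d^*$ to get $\delta\,\partial_x^2(V/\tilde\gamma)\leq(\gamma-\tilde\gamma)V/\tilde\gamma$, and read off $[\partial_x W]_\xi\leq0$ from $b\equiv b_0$.

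Two small fixes. First, take a strictly positive Lipschitz constant (the paper uses $C=\max\{L_K,1\}$), so that $d^*=(\gamma-\tilde\gamma)/C$ is finite even when $f(\cdot,x)$ is flat on $[-K,K]$. Second, drop the divergence-form alternative rather than keeping it as a fallback. Under that reading, an upward jump of $\delta$ at a point where $V$ is $C^1$ with $V'>0$ gives $[\delta V']_\xi>0$, and the hypotheses give no control of $\delta'$.
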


The proof of this result is presented in Section \ref{section_proof_double_diffusive}, and numerical experiments are presented in Section \ref{section_numerical_experiments}.

\subsection{Almost stationary barriers in homogeneous media}

In the homogeneous case, we additionally construct time-dependent barriers that provide quantitative comparison bounds for suitably trapped solutions. Introducing a positive barrier speed leads to a three-dimensional ODE system with several characteristic decay rates. Unlike the stationary construction, this argument does not require condition~\eqref{condition_H1}.(ii), and it yields useful estimates as $\tau\to\infty$. 

After suitable translations and reflections, the almost stationary barriers confine the componentwise threshold-exceedance regions between two prescribed trajectories. 

For comparison, in the classical excitable regime $\alpha<1/2$, the FHN system admits an unstable slow traveling pulse with speed $O(\tau^{-1/2})$ \cite{flores_stability_1991}. The theorem below concerns the opposite regime $\alpha>1/2$ and provides almost stationary barriers with any speed $c(\tau^{-1})$ satisfying the stated scale separation.

\begin{theorem}[Almost stationary barrier in homogeneous media]\label{thm_barriers_conduction_block}
Let $\gamma>0$, $a(x)$ and $b(x)$ be constant positive functions, let $D=0$, and let $f(v,x) = v(1-v) (v-\alpha)$ with $\alpha \in (1/2,1)$. Fix a function 
$c:(0,\infty)\to(0,\infty)$ such that $c(\varepsilon) \to 0$ and $\frac{\varepsilon}{c(\varepsilon)} \to 0$ as $\varepsilon \to 0$ (such as $c(\varepsilon) = \varepsilon^{\beta}$ for $0<\beta <1$). 
There exists $\tau_0>0$ such that given $K_1$, $K_2 >0$, and any $\tau>\tau_0$, upon setting $\varepsilon = \tau^{-1}$, system \eqref{FHN_pde_general} 
admits a barrier moving to the left with speed $\sqrt{b/a}c(\varepsilon)$. For each fixed $t$, the functions 
\[ \bar v(\cdot,t),\qquad \bar w(\cdot,t),\qquad -\underaccent{\bar}{v}(\cdot,t),\qquad -\underaccent{\bar}{w}(\cdot,t) \] 
are positive, nondecreasing, and globally bounded. They decay exponentially to zero as 
\( x+\sqrt{b/a} c(\varepsilon)t\to-\infty. \) 
Moreover, there exists $\xi\in\mathbb R$ such that 
\[ \bar v(x,t),-\underaccent{\bar}{v}(x,t)\geq K_1, \qquad \bar w(x,t),-\underaccent{\bar}{w}(x,t)\geq K_2, \]
whenever 
\( x+\sqrt{b/a} c(\varepsilon)t\geq\xi\).
\end{theorem}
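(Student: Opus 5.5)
Throughout, I rescale space by $x\mapsto x\sqrt{a/b}$ so that $a=b=1$ and the barrier speed becomes $c=c(\varepsilon)$. I work in the moving coordinate $z=x+ct$. I then seek a barrier of symmetric form
\[
(\bar v,\bar w,\underaccent{\bar}{v},\underaccent{\bar}{w})=(V(z),W(z),-V(z),-W(z)),
\]
with $V,W$ positive and nondecreasing. Substituting into \eqref{system_sub_super_solutions} with $D=0$ and $\varepsilon=\tau^{-1}$ gives four inequalities.
\begin{itemize}
\item The upper activator inequality reads $cV'-V''-f(V)-W\geq0$.
\item The lower activator inequality reads $-cV'+V''-f(-V)+W\leq0$.
\item The two recovery inequalities both reduce to $cW'\geq\varepsilon(V-\gamma W)$.
\end{itemize}
For the cubic, $-f(V)=V(V-1)(V-\alpha)\leq V(V+1)(V+\alpha)=f(-V)$ when $V\geq0$. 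Hence both activator inequalities follow from
\[
V''\leq cV'-f(V)-W .
\]
I therefore define $W$ as the exact solution of the lag equation $W'=\tfrac{\varepsilon}{c}(V-\gamma W)$ with $W(-\infty)=0$, that is,
\[
W(z)=\tfrac{\varepsilon}{c}\int_{-\infty}^z e^{-\gamma\varepsilon(z-y)/c}V(y)\,dy .
\]
This $W$ is positive and nondecreasing whenever $V$ is. It is bounded by $\sup V/\gamma$ and satisfies $W(z)\to K/\gamma$ as $z\to\infty$ once $V\equiv K$ there. Since $\varepsilon/c\to0$, $W$ is small over any fixed window, which is the scale separation the construction exploits.

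For $V$ I use the phase-plane structure available precisely because $\alpha>1/2$. Put $F(v)=\int_0^v f$; then $F(1)<0$, so $F<0$ on $(0,\infty)$. The trajectory of $V''=-(1-\delta)f(V)-\delta V$ leaving the origin is therefore strictly increasing and reaches any height in finite $z$-length, for $\delta>0$ small and with a slightly modified energy $\tfrac12V'^2+(1-\delta)F(V)+\tfrac\delta2V^2$ that stays negative in the potential part. Near $-\infty$ it behaves like $e^{\lambda z}$ with $\lambda\approx\sqrt\alpha$, which gives exponential decay. The margin $\delta$ has to absorb two error terms.
\begin{itemize}
\item The drift term $cV'$ is harmless, since $V'\geq0$ and it enters with the favourable sign.
\item The recovery term $W$ needs a pointwise bound. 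On the exponential tail, $W\leq\tfrac{\varepsilon}{c\lambda}V\leq\delta V$. On the finite rising window of length $\ell$, $W\leq\tfrac{\varepsilon}{c}\ell K$. Both are at most $\delta V$ once $\varepsilon/c$ is small, and this is what fixes $\tau_0$.
\end{itemize}
Choose $K>\max\{1,K_1,\gamma K_2\}$ large enough that $-f(K)\geq 2K/\gamma$. Stop the trajectory at the first point $z_K$ where $V=K$, and continue it by the constant $K$. At the kink, $[V']=-V'(z_K^-)\leq0$, so the transmission inequality \eqref{transmission_condition} holds. In the constant region the activator inequality requires $0\leq -f(K)-W$, which holds because $W\leq K/\gamma$. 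The lag equation for $W$ is smooth everywhere.

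The main obstacle is controlling $W$ uniformly once it is no longer small. To the right of $z_K$, $W$ keeps growing toward $K/\gamma$ while $V$ is frozen, so the estimate must rely on the superlinear growth of $-f(K)$ rather than on smallness. I will also have to check that $\tau_0$ can be taken independent of $K_1,K_2$, or at least that the order of quantifiers can be arranged as stated. The threshold statement then follows because $V\geq K>K_1$ for $z\geq z_K$. Since $W\to K/\gamma>K_2$ monotonically, there is $\xi\geq z_K$ beyond which $W\geq K_2$. Boundedness and the exponential decay as $z\to-\infty$ were built into the construction.
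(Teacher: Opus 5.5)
Your decoupled construction is sound in outline, but as written it does not give the quantifier order in the statement, where $\tau_0$ is fixed before $K_1,K_2$. You flag this and leave it open, and the estimate you actually use points the wrong way. The window bound $W\le\frac{\varepsilon}{c}\ell K$ is $\le\delta V$ near the bottom of the window only if $\frac{\varepsilon}{c}\lesssim\delta V_*/(\ell K)$, where $V_*$ is the level at which the window starts. So the resulting $\tau_0$ grows with $K$, and $K$ must exceed $\max\{K_1,\gamma K_2\}$. Since $K_1,K_2$ are arbitrary, this produces no single $\tau_0$.

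The repair is already inside your setup, but you need the quantitative form of the negativity of the potential, not just $F<0$.
\begin{itemize}
\item For the cubic, $F(V)/V^2=-\frac{V^2}{4}+\frac{(1+\alpha)V}{3}-\frac{\alpha}{2}\le\frac{(2\alpha-1)(\alpha-2)}{18}<0$ for all $V>0$.
\item Hence for small $\delta$ there is $\mu>0$, depending only on $\alpha$ and $\delta$, with $(1-\delta)F(V)+\frac{\delta}{2}V^2\le-\mu V^2$.
\item On the zero-energy branch this gives $V'\ge\sqrt{2\mu}\,V$ on the whole rising part. Therefore $\int_{-\infty}^z V\le V(z)/\sqrt{2\mu}$ and $W(z)\le\frac{\varepsilon}{c\sqrt{2\mu}}V(z)$ for every $z\le z_K$, whatever $K$ is. This also makes your tail estimate rigorous rather than asymptotic.
\item The required inequality $V''\le cV'-f(V)-W$ is equivalent to $W\le cV'+\delta\bigl(V-f(V)\bigr)$. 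Moreover $V-f(V)\ge\frac{15}{16}V$ for $V\ge0$ and $\alpha\in(1/2,1)$.
\end{itemize}
So the single condition $\frac{\varepsilon}{c\sqrt{2\mu}}\le\frac{15}{16}\delta$ fixes $\tau_0$, and $K$ may then be chosen after $\tau$.

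With that fix, your route genuinely differs from the paper's.
\begin{itemize}
\item The paper constructs an exact solution of $v'=u$, $u'=-f(v)+cu-w$, $w'=\frac{\varepsilon}{c}(v-\gamma w)$ on the rising part. The tail comes from the Fuchsian substitution $s=e^{\kappa\xi}$ and a power series with majorant bounds uniform in $\varepsilon$, built on eigenvalue asymptotics of the linearization near $(c,\varepsilon/c)=(0,0)$.
\item The paper controls the climb by an energy identity. There $\int wv'$ is bounded by $\frac{\varepsilon}{c}$ times a cubic in $v-v(\bar\xi)$ and absorbed by a quartic lower bound on $F(v)-F(v(\bar\xi))$; note the paper's $F$ has the opposite sign to yours. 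That absorption is where the paper obtains independence from $K$.
\item You instead reserve a fixed margin $\delta$ in a planar Hamiltonian profile, take $W$ exactly from the lag equation, and check the barrier inequality pointwise.
\end{itemize}
Your version avoids the series and majorant machinery, needs only smallness of $\varepsilon/c$ (not of $c$), and gives explicit constants. The paper's profile saturates the inequalities on the rising part, keeping the drift and full coupling. That is closer to a true traveling wave but not needed for the theorem. The plateau step is identical in both.
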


The proof is presented in Section \ref{section_moving_barrier}, and numerical experiments on the results are presented in Section \ref{section_numerical_experiments}. The comparison consequence of Theorem~\ref{thm_barriers_conduction_block} can be formulated in terms of componentwise threshold-exceedance sets. The following corollary records the resulting conditional containment bound.

\begin{corollary}[Conditional threshold-region bound]
\label{cor:threshold_spreading}
Let \(A,B>0\), and assume the hypotheses of Theorem~\ref{thm_barriers_conduction_block}. Choose the almost stationary barrier so that its limiting plateau levels are strictly larger than \(A\) and \(B\), respectively. Let \((\bar v_L,\bar w_L,\underaccent{\bar}{v}_L, \underaccent{\bar}{w}_L)\) be a translated left-moving barrier and let \((\bar v_R,\bar w_R,\underaccent{\bar}{v}_R, \underaccent{\bar}{w}_R)\) be a translated right-moving barrier obtained by spatial reflection.

Let \((v,w)\) satisfy the hypotheses of Lemma~\ref{prop_first_energy_estimate} on every finite time interval, and suppose that, at \(t=0\), it lies componentwise between the lower and upper components of both barriers. For \(t\geq0\), define
\begin{equation*}
\begin{aligned}
\ell_v(t)&:=\sup\{x\in\mathbb R:\bar v_L(x,t)\leq A\},& r_v(t)&:=\inf\{x\in\mathbb R:\bar v_R(x,t)\leq A\},\\
\ell_w(t)&:=\sup\{x\in\mathbb R:\bar w_L(x,t)\leq B\},&  
r_w(t)&:=\inf\{x\in\mathbb R:\bar w_R(x,t)\leq B\}.
\end{aligned}
\end{equation*}
Set
\(x_-(t):=\min\{\ell_v(t),\ell_w(t)\}\) and
\(x_+(t):=\max\{r_v(t),r_w(t)\}\). Then
\[
\{x\in\mathbb R:\lvert v(x,t)\rvert >A\ \text{or}\ \lvert w(x,t)\rvert >B\}
\subseteq [x_-(t),x_+(t)]
\]
for every \(t\geq0\). If \(x_-(t)>x_+(t)\), the threshold-exceedance
set is empty. Moreover, with
\(s:=\sqrt{b/a} c(\varepsilon)\), one has
\(x_-(t)=x_-(0)-st\) and \(x_+(t)=x_+(0)+st\). 
\end{corollary}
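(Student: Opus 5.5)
The plan is to apply Lemma~\ref{prop_first_energy_estimate} twice, once with each barrier. Then I will use the monotonicity and the translation structure of the barriers from Theorem~\ref{thm_barriers_conduction_block} to turn pointwise trapping into set containment.

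\textbf{Step 1: trapping.} Fix an arbitrary $T>0$. The left-moving barrier is a barrier on $\mathbb R\times[0,T)$, since translations preserve barriers in homogeneous media. The reflected barrier is also a barrier, because $x\mapsto -x$ preserves constant coefficients and $f$. The solution is initially trapped between both barriers and satisfies the hypotheses of Lemma~\ref{prop_first_energy_estimate}. Hence for all $t\in[0,T)$ and $j\in\{L,R\}$,
\[
\underaccent{\bar}{v}_j\leq v\leq \bar v_j,\qquad \underaccent{\bar}{w}_j\leq w\leq \bar w_j .
\]
Since $T$ is arbitrary, this holds for all $t\geq0$.

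\textbf{Step 2: symmetry of the barrier.} I need $|v|\leq \bar v_j$, not just $v\leq\bar v_j$. For this I would use the structure of the construction in Section~\ref{section_moving_barrier}. I expect it to be symmetric, $\underaccent{\bar}{v}=-\bar v$ and $\underaccent{\bar}{w}=-\bar w$, as in the stationary case. If the construction is not literally symmetric, I would instead define the thresholds using $\max\{\bar v_j,-\underaccent{\bar}{v}_j\}$. I would then note that both components are nondecreasing in $x$ for the left barrier and nonincreasing for the right barrier, so the argument below goes through unchanged. I expect this bookkeeping to be the main subtle point: it requires checking that the sublevel sets defining $\ell_v$ and $r_v$ control the lower components too.

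\textbf{Step 3: containment.} The left barrier $\bar v_L(\cdot,t)$ is nondecreasing and decays to $0$ at $-\infty$. Its plateau exceeds $A$, so the set $\{\bar v_L\leq A\}$ is a nonempty half-line $(-\infty,\ell_v(t)]$, using continuity of $\bar v_L$. For $x<\ell_v(t)$ we get $|v(x,t)|\leq \bar v_L(x,t)\leq A$. Symmetrically, $\bar v_R(\cdot,t)$ is nonincreasing, so $|v(x,t)|\leq A$ for $x>r_v(t)$. The same argument gives $|w|\leq B$ for $x<\ell_w(t)$ or $x>r_w(t)$.

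Now suppose $|v(x,t)|>A$ or $|w(x,t)|>B$. Then $x\geq \min\{\ell_v,\ell_w\}=x_-(t)$, and likewise $x\leq x_+(t)$. This gives the stated inclusion. If $x_-(t)>x_+(t)$, the interval is empty, so the exceedance set is empty.

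\textbf{Step 4: motion of the endpoints.} Theorem~\ref{thm_barriers_conduction_block} states that the barrier moves left with speed $s$. Concretely, the construction gives $\bar v_L(x,t)=\Phi(x+st)$ for a fixed profile $\Phi$, and similarly for the other components. Then $\{\bar v_L(\cdot,t)\leq A\}=\{\bar v_L(\cdot,0)\leq A\}-st$, so $\ell_v(t)=\ell_v(0)-st$, and the same holds for $\ell_w$. Taking the minimum gives $x_-(t)=x_-(0)-st$. The reflected barrier gives $r_v(t)=r_v(0)+st$ and $r_w(t)=r_w(0)+st$, hence $x_+(t)=x_+(0)+st$.

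If instead the almost stationary barrier is only asymptotically a traveling profile, I would rely on its exact translation form in the construction. That exact form is what the stated identities require.
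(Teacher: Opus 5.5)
Your proposal is correct and follows essentially the same route as the paper. You compare with the left-moving barrier and with its reflection on an arbitrary finite interval $[0,T)$, and use the exact symmetry $\underaccent{\bar}{v}=-\bar v$, $\underaccent{\bar}{w}=-\bar w$ of the ansatz in Section~\ref{section_moving_barrier} to get $\lvert v\rvert\leq\bar v_j$, $\lvert w\rvert\leq\bar w_j$; monotonicity in $x$ then gives the containment, and the exact dependence on $\sqrt{a/b}\,x\pm c(\varepsilon)t$ gives $x_\pm(t)=x_\pm(0)\pm st$ (so your hedges in Steps~2 and~4 are unnecessary).
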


\begin{proof}
Fix \(T>0\). Applying Lemma~\ref{prop_first_energy_estimate} to the
left-moving barrier and using the symmetry of its lower and upper
components gives \(\lvert v(x,t)\rvert \leq\bar v_L(x,t)\) and
\(\lvert w(x,t)\rvert \leq\bar w_L(x,t)\) for \(x\in\mathbb R\) and
\(0\leq t\leq T\). Since \(\bar v_L(\cdot,t)\) and
\(\bar w_L(\cdot,t)\) are nondecreasing, it follows that
\(\lvert v(x,t)\rvert \leq A\) and \(\lvert w(x,t)\rvert \leq B\) whenever \(x<x_-(t)\).

The corresponding comparison with the reflected right-moving barrier,
whose upper components are nonincreasing, gives
\(\lvert v(x,t)\rvert \leq A\) and \(\lvert w(x,t)\rvert \leq B\) whenever \(x>x_+(t)\).
Consequently, the threshold-exceedance set is contained in
\([x_-(t),x_+(t)]\). Since \(T>0\) is arbitrary, this holds for every
\(t\geq0\).

Finally, the left-moving barrier depends on
\(\sqrt{a/b} x+c(\varepsilon)t\), while its reflection depends on
\(-\sqrt{a/b} x+c(\varepsilon)t\). Their threshold locations therefore
move with velocities \(-s\) and \(s\), respectively, where
\(s=\sqrt{b/a} c(\varepsilon)\).
\end{proof}

Theorem \ref{thm_barriers_conduction_block} is qualitatively related to the propagation-failure mechanism studied in \cite{Ratas2012}, where high-frequency stimulation is treated using averaging and geometric singular perturbation theory. In that setting, increasing the stimulation causes a traveling pulse to slow down, shrink, and eventually disappear as the effective parameter approaches $\alpha=1/2$ from below. By contrast, our construction applies for $\alpha>1/2$ and produces comparison barriers whose speed tends to zero as $\tau\to\infty$.

\section{Stationary barriers in heterogeneous media: Proof of Theorem~\ref{thm_barrier_heterogeneous}}\label{section_barrier_heterogeneous}

\begin{figure}
    \centering

    \newcommand{\barrierpanel}[2]{%
        \subfloat[\label{#2}]{%
            \begin{minipage}[c][3.2cm][t]{0.3\textwidth}
                \centering
                \begin{tikzpicture}
                    \node[
                        anchor=south west,
                        inner sep=0cm
                    ] (image1) at (0,0.5cm) {%
                        \includegraphics[
                            width=0.9\linewidth
                        ]{#1}%
                    };

                    \begin{scope}[
                        overlay,
                        shift={(image1.south west)},
                        x={(image1.south east)},
                        y={(image1.north west)}
                    ]
                        \node at (0.17,0.94)
                            {\tiny $\bar{v}$};
                        \node at (0.17,0.83)
                            {\tiny $\underaccent{\bar}{v}$};
                        \node at (0.17,0.725)
                            {\tiny $\bar{w}$};
                        \node at (0.17,0.6)
                            {\tiny $\underaccent{\bar}{w}$};
                        \node at (0.5,-0.06)
                            {$x$ axis};
                        \node[rotate=90] at (-0.05,0.5)
                            {$v$ axis};
                    \end{scope}
                \end{tikzpicture}
            \end{minipage}%
        }%
    }
    
    \barrierpanel{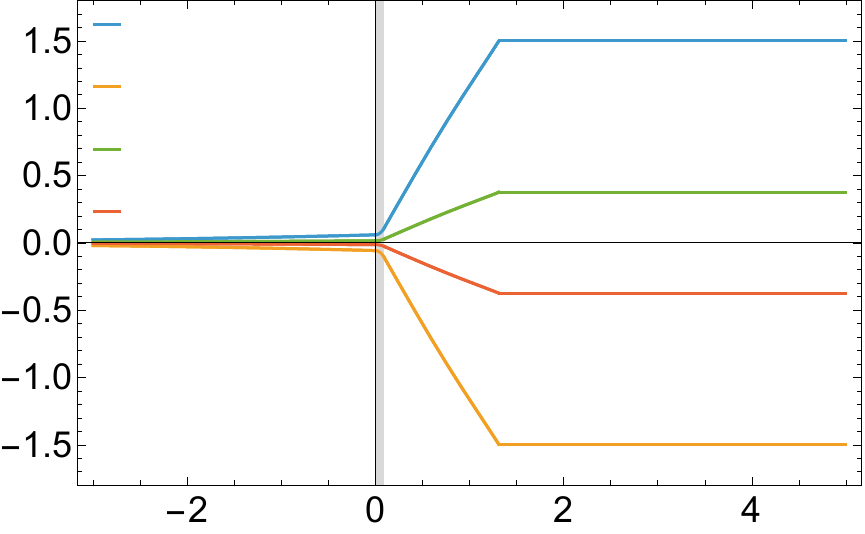}{fig1a}
    \qquad 
    \barrierpanel{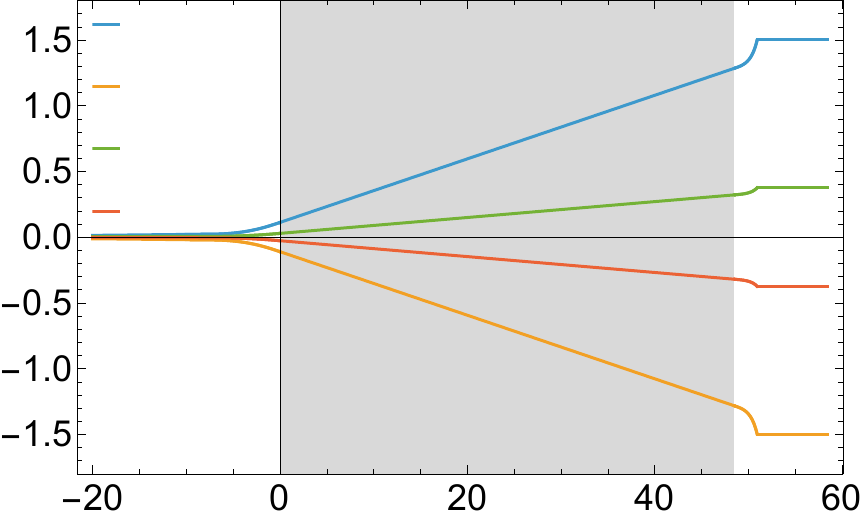}{fig1b}
    
    \par

    \barrierpanel{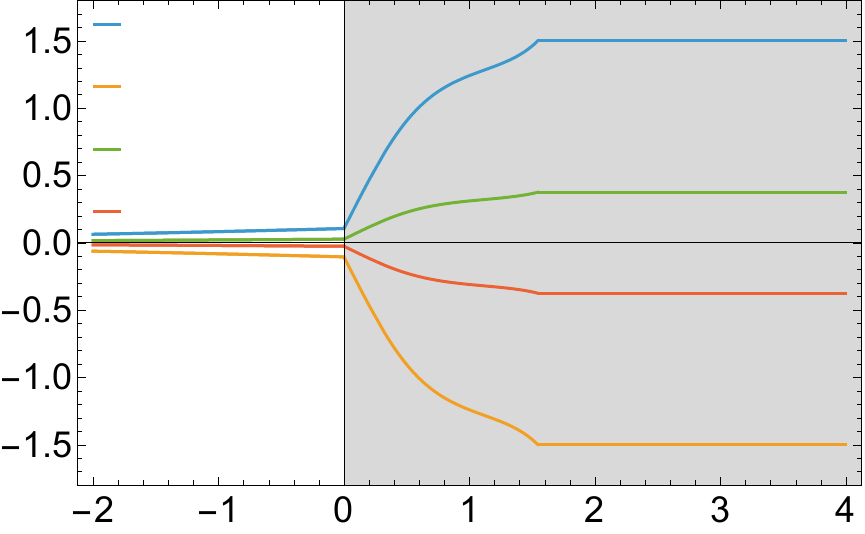}{fig1c}
    \hfill
    \barrierpanel{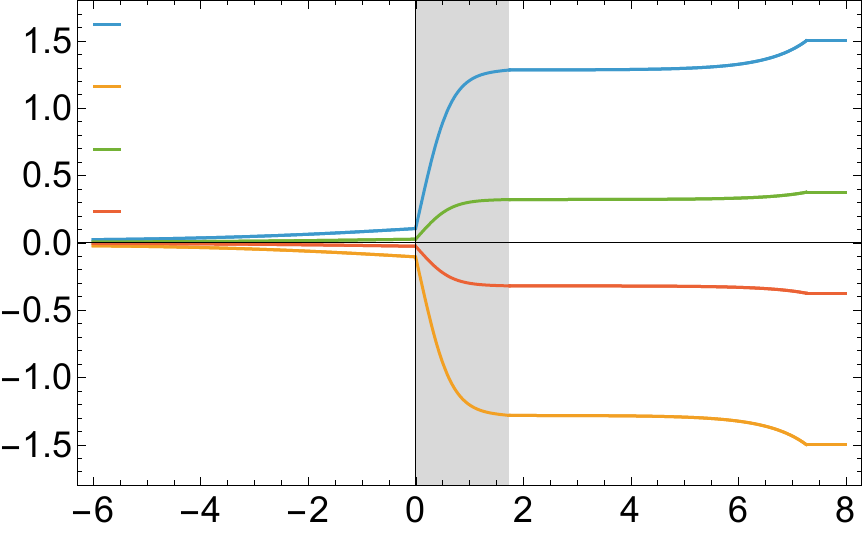}{fig1d}
    \hfill
    \barrierpanel{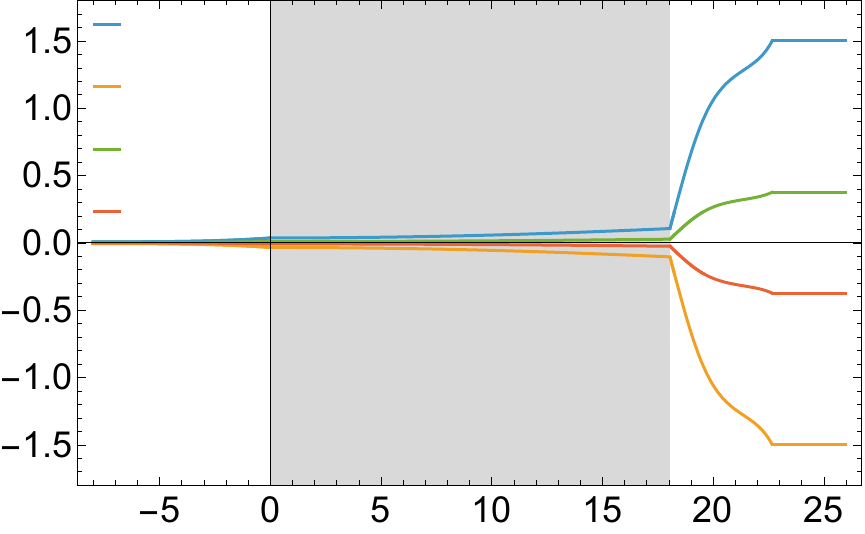}{fig1e}

    \caption{Barrier for a localized heterogeneity constructed in the
    proof of Theorem~\ref{thm_barrier_heterogeneous}. For each
    heterogeneity, the barrier follows the construction used in the
    proof. We consider $f_0(v)=v(1-v)(v-\alpha)$ with $\alpha=0.4$,
    $\gamma=4$, and $K=1.5$. The gray region indicates the
    heterogeneous region.}
    \label{fig_localized_heterogeneity}
\end{figure}

\begin{proof}

The goal is to construct a stationary barrier $(\bar v,\bar w,\underaccent{\bar}{v},\underaccent{\bar}{w})$ following the approach described in Lemma \ref{lem:symmetric_stationary_barriers} for each case of Theorem~\ref{thm_barrier_heterogeneous}. We will construct the required function $V$ in pieces, from left to right. Each piece of $V$ will be $C^2$, increasing, and will satisfy equation \eqref{system_trapping_reduced} on its corresponding interval. At each point where the pieces of $V$ are joined together, we will ensure that $V$ is continuous and that the transmission inequality \eqref{transmission_condition_symmetric} is satisfied.

If $K_0>0$ is arbitrarily prescribed, and $v_0\geq 0$ is given by condition \eqref{condition_H1}, we let $K=\max\{K_0,v_0\}$, and the construction of $V$ will be such that it grows from a sufficiently small nonnegative value $V_-$ at $-\infty$, to the value $K$ at some point $x_0\in\R$. An important observation in this monotonically increasing construction is: as soon as $V(x_0)=K$ and $V'(x_0)\geq 0$ for some $x_0$, the construction can be completed by letting $V(x)=K$ for all $x\geq x_0$. This is true since with this choice the transmission inequality \eqref{transmission_condition_symmetric} is automatically satisfied at $x_0$ and at all points in $(x_0,\infty)$ where the coefficients are discontinuous, and additionally \eqref{system_trapping_reduced} is satisfied in all subintervals of $(x_0,\infty)$ where the coefficients are smooth, in all cases of Theorem \ref{thm_barrier_heterogeneous}.

We construct the profile $V$ in four stages: an initial tail, a climb through the heterogeneous region, and a monotonic continuation to the target height, with a final constant segment, noting that we can cut short the construction if we reach the desired height $K$ at any point. The generic structure of $V$ will be
\begin{equation}\label{eq:V_struct}
V(x) = \begin{cases}
    v_1(x) & , x<0 ,\\
    v_2(x) & , 0\leq x <L ,\\
    v_3(x) & , L\leq x < x_0 ,\\
    K  &, x\geq x_0 , 
\end{cases}
\end{equation}
where $[0,L]$ is the location of the heterogeneity, with $L$ to be specified in the construction. Let $h(v,x) := \min\{-f(v,x),f(-v,x)\}$, let $h_0(v) := \min\{-f_0(v),f_0(-v)\}$, and observe that $h_0$ is continuous and differentiable at $v=0$, with $h_0'(0) = -f'_0(0)$. The quotient $b(x)/a(x)$ is piecewise constant, we denote it by $\sigma_1$ for $x<0$, $\sigma_2$ for $x\in(0,L)$, and $\sigma_3$ for $x>L$.
Throughout the proof, we repeatedly use the existence and properties of solutions to a specific family of constrained ODEs, as stated in the following Lemma.

\begin{lemma}\label{lem:generic_ODE}
    Let $f(v,x)$ be continuous, locally Lipschitz in $v$ uniformly in $x$, and locally bounded in $v$ uniformly in $x$ in the sense \eqref{local_uniform_boundedness}. Let $\sigma, w_0>0$, let $x_0, k\in \R$, and let $R>k$. Then there exist $L\in(x_0,\infty]$ and a unique maximal constrained solution $v: [x_0,L)\to [k, R)$ satisfying
    \begin{empheq}[left=\empheqlbrace]{equation}\label{eq:generic_case_lemma}
    \begin{aligned}
    &\sigma v'' = f(v,x),\quad x\in (x_0,L),\\
    &v(x_0) = k, \quad v'(x_0) = w_0,\\
    &v(x)<R,\quad \text{ and }\quad v'(x)>0, \quad x\in (x_0,L).
    \end{aligned}
    \end{empheq}
Moreover, $v(L^-)$ and $v'(L^-)$ exist and are finite. If $L<\infty$, then either $v(L^-)=R$ or $v'(L^-)=0$. If $L=\infty$, then $v'(x)\to0$ as $x\to\infty$. If, additionally, $v'(x)\geq \delta$  for some $\delta>0$ and all $x\in(x_0,L)$, then $L$ is finite and $v(L^-) = R$. Also, $v$ satisfies
\begin{equation}\label{eq:generic_energy}
    \frac{1}{2}v'(x)^2 = \frac{1}{2}w_0^2  + \frac{1}{\sigma} \int_{k}^{v(x)} f(\omega,v^{-1}(\omega)) d\omega, \quad \forall x\in(x_0,L).
\end{equation}
\end{lemma}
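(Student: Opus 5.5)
The proof treats \eqref{eq:generic_case_lemma} as a first-order system and then uses monotonicity of $v$ to change variables. Set $u=(v,v')$. Then $u' = F(u,x) := (v', f(v,x)/\sigma)$. By hypothesis $F$ is continuous in $x$ and locally Lipschitz in $u$, uniformly in $x$. The standard Picard--Lindel\"of theorem then gives a unique local solution with $u(x_0)=(k,w_0)$. Since $w_0>0$ and $k<R$, the constraints $v<R$ and $v'>0$ hold on a right neighborhood of $x_0$.

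Define $L$ as the supremum of the points $\ell>x_0$ such that the solution exists on $[x_0,\ell)$ and satisfies $v<R$, $v'>0$ there. This gives the maximal constrained solution. Uniqueness follows from uniqueness for the unconstrained initial value problem, since any two constrained solutions coincide on their common interval.

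For the boundary behavior, observe that on $(x_0,L)$ we have $k\le v<R$. Condition \eqref{local_uniform_boundedness} with radius $\max\{|k|,R\}$ then bounds $|v''|\le C/\sigma$. If $L<\infty$, $v'$ is Lipschitz, so $v'(L^-)$ exists and is finite. Since $v$ is increasing and bounded, $v(L^-)$ exists as well. The unconstrained solution cannot blow up at $L$ because $u$ stays bounded, so it extends past $L$. Maximality then forces one of the constraints to fail at $L$, that is, $v(L^-)=R$ or $v'(L^-)=0$.

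If $L=\infty$, then $v$ is increasing and bounded, so $v(\infty)$ exists. Also $v'>0$ and $\int_{x_0}^\infty v'<\infty$. Because $v''$ is bounded, $v'$ is uniformly continuous, and a Barbalat-type argument gives $v'(x)\to0$. In this case $v'(L^-)=0$, which is finite.

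If in addition $v'\ge\delta$, then $R-k>v(x)-k\ge\delta(x-x_0)$, so $L\le x_0+(R-k)/\delta<\infty$. Moreover $v'(L^-)\ge\delta>0$, so the first alternative must hold: $v(L^-)=R$.

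For the energy identity, multiply the equation by $v'$ to get $\tfrac12 (v'^2)' = f(v(x),x)v'(x)/\sigma$. Integrate from $x_0$ to $x$. Since $v$ is a strictly increasing $C^1$ bijection from $[x_0,x]$ onto $[k,v(x)]$, substitute $\omega=v(s)$, so that $s=v^{-1}(\omega)$ and $d\omega=v'(s)\,ds$. This yields \eqref{eq:generic_energy}. The integrand $\omega\mapsto f(\omega,v^{-1}(\omega))$ is continuous, so the integral is a Riemann integral.

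The main obstacle is the case where $f$ is only piecewise continuous in $x$, as in the heterogeneous settings. There I would read the equation in the Carath\'eodory sense, or solve piecewise and glue at the finitely many jump points. The $C^1$ matching of $(v,v')$ at those points keeps both the extension argument and the change of variables valid.
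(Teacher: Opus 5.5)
Your proof is correct and follows essentially the same route as the paper: local existence and continuation for the ODE, a uniform bound on $v''$ from \eqref{local_uniform_boundedness}, the argument that $v'$ is integrable and uniformly continuous when $L=\infty$, and multiplication by $v'$ followed by the substitution $\omega=v(s)$ to get \eqref{eq:generic_energy}. The only minor difference is that the paper obtains the existence of $v'(L^-)$ in both cases by passing to the limit in \eqref{eq:generic_energy}, whereas you use Lipschitz continuity of $v'$ when $L<\infty$ and the decay $v'\to0$ when $L=\infty$; both arguments work.
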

\begin{proof}
    Since $R>k$, $w_0>0$, and $f(v,x)$ is continuous and locally Lipschitz in $v$, uniformly in $x$, equation \eqref{eq:generic_case_lemma} admits a unique maximal twice differentiable solution $v$ on an interval of the form $[x_0,L)$, with $L>x_0$. We have constrained $v$ to be strictly increasing and bounded above by $R$, so $v(L^-)$ exists. Multiplying the ODE in 
    \eqref{eq:generic_case_lemma} by $v'$ and integrating, for $x\in(x_0,L)$ we obtain
\begin{equation*}
    \frac{1}{2}v'(x)^2 - \frac{1}{2}v'(x_0)^2=  \frac{1}{\sigma} \int_{x_0}^{x} f(v(s),s)v'(s) ds.
\end{equation*}
Since $v'>0$ on $(x_0,L)$, the function $v$ is strictly increasing and therefore invertible on its image. The change of variables $\omega=v(s)$ gives \eqref{eq:generic_energy}. 

Since $v(L^-)$ exists and $f$ is uniformly bounded on the strip $k\leq v\leq R$, taking the limit $x\to L^-$ in \eqref{eq:generic_energy} shows that $v'(L^-)$ exists and is finite. Since $v(x)$ remains bounded, the uniform local boundedness of $f$ implies  $|v''(x)|= |f(v(x),x)|/\sigma \leq C$ for all $x\in(x_0,L)$. Thus $v'$ is uniformly Lipschitz continuous.

If $L<\infty$ and both $v(L^-)<R$ and $v'(L^-)>0$, the local existence theorem extends the solution beyond $L$, and continuity preserves $v<R$ and $v'>0$ on a sufficiently short right neighborhood of $L$. This contradicts the maximality of $L$. Therefore, either $v(L^-)=R$ or $v'(L^-)=0$.

If $L=\infty$, then we write
\begin{equation*}
    0\leq  \int_{x_0}^M v'(x) dx = v(M) - v(x_0) \leq v(L^-) - v(x_0)<\infty. 
\end{equation*}
Letting $M\to\infty$ shows that $v'\in L^1(x_0,\infty)$. Since $v'\geq 0$ and uniformly continuous, this implies $v'(x) \to 0$ as $x\to \infty$.

Finally, if $v'(x)\geq \delta>0$ on $x\in(x_0,L)$, then $v(x) \geq v(x_0) + \delta(x-x_0)$. Thus $L \leq x_0 + (R- v(x_0))/\delta<\infty$. Moreover, $v'(L^-) \geq \delta$, so the preceding alternative implies $v(L^-) = R$.
\end{proof}

\noindent
\textbf{Step 1:} The approach. In this step, we construct the tail $v_1(x), x<0$. This shows that we can climb from an arbitrarily small level $V_-\geq 0$ to a strictly (and structurally defined) positive height $k>0$ before hitting the heterogeneity. 

From assumptions \eqref{condition_H1}, there exists $k^*>0$ such that $(h_0(v)-v/\gamma)>0$ for all $v\in(0,k^*]$. Fix $k \in (0, k^*)$ and choose a value $V_-$ in $[0,k)$. Let $x_{\text{-}\infty}<0$ and $v_1:(x_{\text{-}\infty},0]\to(V_-,k]$ be the maximal solution of the initial value problem
\begin{empheq}[left=\empheqlbrace]{equation}\label{equation_tail}
\begin{aligned}
&\sigma_1 v_1'' = h_0(v_1) -v_1/\gamma,\quad x\in (x_{\text{-}\infty},0),\\
&v_1(0) = k, \quad v_1'(0) = \sqrt{2} \biggl(\frac{1}{\sigma_1}\int_{V_-}^k(h_0(v)-v/\gamma)dv\biggr)^{1/2},\\
&V_- <  v_1(x)\leq k,\quad \text{ and }\quad v_1'(x)>0, \quad \text{ for } x\in (x_{\text{-}\infty},0).
\end{aligned}
\end{empheq}
The existence of $x_{\text{-}\infty}<0$ and $v_1$ follows from the analogous backwards version of Lemma \ref{lem:generic_ODE}. The lemma also implies that $v_1$ is strictly increasing, that $v_1(x_{\text{-}\infty}^+)$ and $v_1'(x_{\text{-}\infty}^+)$ exist, and that either $x_{\text{-}\infty}=-\infty$, $v_1(x_{\text{-}\infty}^+)= V_-$ or $v_1'(x_{\text{-}\infty}^+)=0$. For $x\in (x_{\text{-}\infty},0)$ equation \eqref{eq:generic_energy} implies
\begin{equation*}
\frac{1}{2}v_1'(x)^2 = \frac{1}{2}v_1'(0)^2  -  \frac{1}{\sigma_1} \int_{v_1(x)}^{k} (h_0(v)-v/\gamma) dv,
\end{equation*}
and for the particular choice of $v_1'(0)$ this means 
\begin{equation}
\begin{split}
\frac{1}{2}v_1'(x)^2
    &=\frac{1}{\sigma_1}\int_{V_-}^k (h_0(v)-v/\gamma)dv - \frac{1}{\sigma_1} \int_{v_1(x)}^{k} (h_0(v)-v/\gamma) dv\\
&= \frac{1}{\sigma_1}\int_{V_-}^{v_1(x)} ( h_0(v)-v/\gamma)dv.\label{eq:energy_step1}
\end{split}
\end{equation}
Since $(h_0(v)-v/\gamma)>0$ for $v\in(V_-,k)$, equation \eqref{eq:energy_step1} implies that $v_1'(x_{\text{-}\infty}^+)=0$
if and only if $v_1(x_{\text{-}\infty}^+)=V_-$. Since $v_1'(x_{\text{-}\infty}^+)$ is the minimum of $v_1'(x), x\in(x_{\text{-}\infty},0)$, Lemma~\ref{lem:generic_ODE} implies that  $v_1'(x_{\text{-}\infty}^+)=0$
and $v_1(x_{\text{-}\infty}^+)=V_-$ is the only possibility.

If $V_->0$, then $v_1''=(h_0(v)-v/\gamma)/\sigma_1>\delta$, for all $v\in(V_-,k)$, for some $\delta>0$. This means that $v_1'(x)$ is strictly increasing at a rate of at least $\delta$. Since $x_{\text{-}\infty}$ is characterized by $v_1'(x_{\text{-}\infty}^+)=0$, this implies that $x_{\text{-}\infty}>-\infty$. Since $(h_0(V_{-})-V_{-}/\gamma)>0$, extending the profile constantly by $V_-$ to the left of $x_{\text{-}\infty}$ preserves the differential inequality \eqref{system_trapping_reduced}.

When $V_-=0$, condition \eqref{condition_H1} implies that there exists $\delta_2>\delta_1>0$ such that $\delta_1 v\leq (h_0(v) -v/\gamma)/\sigma_1\leq \delta_2 v$ for all $v\in [0,k]$. Since $v$ and $v'$ are positive, equation \eqref{eq:energy_step1} implies that 
\begin{equation*}
    \sqrt{\delta_1}v_1(x)\leq v_1'(x)\leq \sqrt{\delta_2} v_1(x) \quad \Rightarrow \quad v_1(0) e^{\sqrt{\delta_2} x} \leq v_1(x) \leq  v_1(0) e^{\sqrt{\delta_1} x}, \quad x\leq 0.
\end{equation*}
These estimates show that $v_1$ cannot reach zero at a finite left endpoint. Hence $v_1(x_{\text{-}\infty}^+)=v_1'(x_{\text{-}\infty}^+)=0$, and $v_1(x)$ decays to 0 exponentially as $x\to-\infty$. 

As mentioned at the beginning of the section, if $k\geq K$ then we can already complete the construction of $V$, so for the next steps we can assume $k<K$.

\noindent
\textbf{Step 2:} The climb. 
The presence of the heterogeneous region will allow us to construct a profile $V$ with the structure \eqref{eq:V_struct} that either climbs to the level $v_0$, or that exits the heterogeneity region $(0,L)$ with a sufficiently large slope. More precisely, we separate the five cases of Theorem~\ref{thm_barrier_heterogeneous} into two classes. For the first class, if $p>0$ is sufficiently large, we construct $L=L(p)>0$ and $v_2(x), x\in[0,L)$ such that 
$V(L^+)\geq v_0$, $V'(L^+)>0$. For the second class, given any $M>0$, we show that if $p>0$ is large enough, then we can construct $V$ with $V'(L^+)>M/\sqrt{\sigma_3}$.
\begin{enumerate}
\item[\ref{thm_heterogeneity_type1}] Additive heterogeneity distributed over the interval $[0,L]$. This case belongs to the second class. Let $k$ and $k^*$ be those from Step 1. Choose $\delta \in (0,\min\{1, k^*-k\})$, so $h_0(v) - v/\gamma >0$ for $v\in [k,k+\delta]$ and observe that 
\[
\kappa:=\min_{v\in[k,k+\delta]}\{- g(v), g(-v)\}/\sigma_2 >0. 
\]
Given $p\geq 0$ let $L> 0$ and $v_2$ correspond to the maximal solution of
\begin{empheq}[left=\empheqlbrace]{equation}\label{equation_climb2}
\begin{aligned}
&\sigma_2 v_2'' =h_0(v_2) -v_2/\gamma + p \min\{- g(v_2), g(-v_2)\},\quad x\in (0,L),\\
&v_2(0) = v_1(0) , \quad v'_2(0) = v'_1(0)\\
&v_2< k+\delta, \quad \text{ and } \quad v_2'(x)>0, \quad \text{ for } x\in (0,L).
\end{aligned}
\end{empheq}
Here we used that $h(v,x)\geq h_0(v)+p\min\{-g(v),g(-v)\}$ on $[0,L]$, so a solution of \eqref{equation_climb2} satisfies the required differential inequality. Since $v_2''(x)\geq p\kappa>0$ for all $x\in(0,L)$, $v_2'$ is increasing and $v_2'(x)\geq v_2'(0)>0$ for all $x\in(0,L)$. Lemma~\ref{lem:generic_ODE} then implies that $L$ is finite and $v_2(L^-)=k+\delta$. Cauchy's mean value theorem then implies that, for some $x\in(0,L)$,
\[
\frac{v_2'(L^-)}{\delta}\geq \frac{v_2'(L^-)-v_2'(0)}{v_2(L^-)-v_2(0)}=\frac{v_2''(x)}{v_2'(x)}\geq \frac{\kappa p}{v_2'(L^-)}.
\]
Therefore, $v_2'(L^-) \geq \sqrt{\kappa p \delta}$. Given $M>0$, since \(\sigma_3\) is fixed in this case, for all sufficiently large $p>0$ the corresponding $L$ and $v_2$ satisfy $v_2'(L^-)>M/\sqrt{\sigma_3}$.  In case~\ref{thm_heterogeneity_type1}, $b(x)$ is constant, so we can set $V(L^+)=v_3(L^+)=v_2(L^-)$ and $V'(L^+)=v'_3(L^+)=v'_2(L^-)$, ensuring that \(V\) is continuous, satisfies \eqref{transmission_condition_symmetric} at \(x=L\), and has \(V'(L^+)>M/\sqrt{\sigma_3}\).

\item[\ref{thm_heterogeneity_type2}] This case describes a segment with depressed dynamics and belongs to the first class.
Since  $h(v,x) - v/\gamma\geq 0$ if $x\in [0,L]$ and $v \geq 0$, 
the function $v_2(x)=v_1(0)+xv_1'(0), x\in[0,L]$ satisfies inequality \eqref{system_trapping_reduced}. Set $p=L$. If $p\geq p^*=\max\{(v_0-k)/v_1'(0),0\}$, then $v_2(L)\geq v_0$ and $v_2'(L)=v_1'(0)>0$. Since $b(x)$ is constant in this case, we set $V(L^+)=v_2(L)$ and $V'(L^+)=v_1'(0)>0$. This completes the Step~2 construction with the properties required for the first class.

\item[\ref{thm_heterogeneity_type3}] This is a case of an abrupt change in the geometry from a given point onward and belongs to the second class. In this case $L=0$, there is no function $v_2$, and the construction of $V$ goes straight from $v_1$ to $v_3$. Given the discontinuity in $b(x)$ at $x=0$ the transmission inequality \eqref{transmission_condition_symmetric} at that point is
\begin{equation*}
r_0^2v_1'(0^-) \geq \biggl(\frac{r_0}{1+p}\biggr)^2 v_3'(0^+),
\end{equation*}
and for the construction of $v_3$ we impose equality, hence $V$ will
be constructed with 
\begin{equation*}
V'(0^+)=v_3'(0^+)=(1+p)^2 v_1'(0^-).
\end{equation*}
In this case, \(\sigma_3=r_0/(1+p)\). Given $M>0$, since $v_1'(0^-)>0$, the identity above implies that \(V'(0^+)>M/\sqrt{\sigma_3}\) for all sufficiently large \(p\). We also impose \(V(0^+)=v_1(0^-)\).

\item[\ref{thm_heterogeneity_type4}] This is also the case of an abrupt change in the geometry, but only during a compact interval, returning afterward to the original geometry. This case falls under the first class. At $x=0$ there is a discontinuity in $b(x)$. If we impose equality in the transmission inequality \eqref{transmission_condition_symmetric}, then $v_2$ must satisfy $v_2(0)=v_1(0)=k$ and $v_2'(0^+) = (1+p)^2 v_1'(0^-)$. We assumed $v_1(0)=k<K$, and we let $L$ and $v_2$ correspond to the maximal solution of
\begin{empheq}[left=\empheqlbrace]{equation}\label{equation_climb_case4}
\begin{aligned}
&\sigma_2 v_2'' = h_0(v_2) -v_2/\gamma , &&\text{for } x\in(0,L),\\
&v_2(0) = v_1(0), \quad v_2'(0) = (1+p)^2 v_1'(0),&& \\
&v_2(x)<v_0,\quad \text{ and } \quad v_2'(x)>0,  &&\text{for } x\in(0,L).
\end{aligned}
\end{empheq}
From Lemma~\ref{lem:generic_ODE}, for $x\in(0,L)$,
\begin{equation}\label{formula_derivative_case4}
\begin{split}
\frac{1}{2}v_2'(x)^2 &=\frac{1}{2}v_2'(0)^2  + \frac{1}{\sigma_2}\int_0^{x} (h_0(v)-v/\gamma) v' dx \\
&= \frac{1}{2}(1+p)^4v_1'(0)^2 + \frac{1+p}{r_0}\int_{v_2(0)}^{v_2(x)} (h_0(v)-v/\gamma) dv.
\end{split}
\end{equation}
Condition \eqref{condition_H1} implies that the last integral term in \eqref{formula_derivative_case4} admits a uniform lower bound, namely 
\begin{equation*}
\int_{v_2(0)}^{v_2(x)} (h_0(v)-v/\gamma) dv \geq \inf_{0\leq z_1\leq z_2\leq v_0} \int_{z_1}^{z_2} (h_0(v)-v/\gamma) dv.
\end{equation*}
Here we used that any negative contribution to the integral is confined to $[0,v_0]$. 
Since $v'_1(0) >0$, equation \eqref{formula_derivative_case4} then implies that, for $p\geq p^*$, one has $v_2'(x) \geq 1$ for every $x\in(0,L)$, where $p^*$ can be chosen independent of $K_0$. Lemma~\ref{lem:generic_ODE} then implies that $v_2(L^-)=v_0$, while $v_2'(L^-)\geq 1>0$. For the construction of $V$, this means that $V(L^+)=v_0$, and imposing equality in the transmission inequality $\eqref{transmission_condition_symmetric}$ constructs $V$ with $V'(L^+)= v_2'(L^-)/(1+p)^2>0$, as required.

\item[\ref{thm_heterogeneity_type5}] This is a case of an abrupt change in the geometry on a bounded interval, in the alternate direction of case \ref{thm_heterogeneity_type4}. This case falls in the second class.

The difference from the previous case is that this time the critical change is the return to the original conductivity, instead of the initial change.

Let $\tilde{k}\in(k,k^*)$, where $k$ and $k^*$ are those from Step 1. Then $(h_0(v)-v/\gamma)>0$ for all $v\in[k,\tilde{k}]$. At $x=0$ there is a discontinuity in $b(x)$, if we impose equality in the transmission inequality \eqref{transmission_condition_symmetric} we require $v_2$ to satisfy $v_2(0)=v_1(0)=k$ and $v_2'(0^+) = (1+p)^{-2} v_1'(0^-)>0$. We assumed $v_1(0)=k<K$, and we let $L$ and $v_2$ correspond to the maximal solution of
\begin{empheq}[left=\empheqlbrace]{equation}\label{equation_climb6}
\begin{aligned}
&\sigma_2 v_2'' = h_0(v_2) -v_2/\gamma, &&\text{for } x\in(0,L),\\
&v_2(0) = v_1(0), \quad v_2'(0) = (1+p)^{-2} v_1'(0), &&\\
&v_2(x)< \tilde{k},\quad \text{ and } \quad v_2'(x)>0, &&\text{for } x\in(0,L).
\end{aligned}
\end{empheq}
From Lemma~\ref{lem:generic_ODE},
\begin{equation}\label{formula_derivative_at_L_case6}
\begin{split}
\frac{1}{2}v_2'(x)^2 &= \frac{1}{2}v_2'(0)^2  + \frac{1}{\sigma_2}\int_0^{x} (h_0(v)-v/\gamma) v' dx\\
&= \frac{1}{2}\biggl(\frac{v_1'(0)}{(1+p)^2}\biggr)^2  +  \frac{1}{(1+p)r_0}\int_{k}^{v_2(x)} (h_0(v)-v/\gamma) dv.
\end{split}
\end{equation}
In particular $v_2'(x)\geq v'_2(0)>0$ for $x\in(0,L)$. Hence, Lemma~\ref{lem:generic_ODE} implies that $L$ is finite and $v_2(L^-)=\tilde{k}$. If $\tilde{k}\geq K$ then we can cut the construction at the point $x_0$ such that $v_2(x_0)=K$ and let $V(x)=K$ for $x\geq x_0$. Otherwise, continuing the construction of $V$, at $x=L$ there is another discontinuity in $b(x)$. Imposing equality in the transmission inequality \eqref{transmission_condition_symmetric} we require $v_3$ to satisfy $v_3(L)= v_2(L^-)$, and $v_3'(L^+) = (1+p)^2v_2'(L^-)$. Combined with \eqref{formula_derivative_at_L_case6} this implies that
\begin{equation*}
V'(L^+)=v_3'(L^+) = \biggl(v'_1(0)^2  + \frac{2(1+p)^3}{r_0} \int_{k}^{\tilde{k}} (h_0(v)-v/\gamma) dv\biggr)^{1/2}.
\end{equation*}
Given \(M>0\), since \(\sigma_3\) is fixed in this case, this formula shows that \(V'(L^+)>M/\sqrt{\sigma_3}\) for all sufficiently large \(p\). This profile is continuous at \(L\) because \(V(L^+)=v_2(L^-)\).
\end{enumerate}

\textbf{Step 3:} The summit. At this stage, the construction of $V$ falls into one of three scenarios. Scenario 1: while constructing the pieces $v_1$ or $v_2$ we already reached $V(x_0)=K$ with $V'(x_0)\geq 0$, for some point $x_0$, which means that the construction of $V$ is completed by letting $V(x)=K$ for $x\geq x_0$. Scenario 2: we have constructed a valid increasing profile $V(x)$, for $x\in(-\infty,L]$, with $v_0\leq V(L^+)<K$ and $V'(L^+)>0$.
Scenario 3: we have constructed a valid increasing profile $V(x)$, for $x\in(-\infty,L]$, with $V(L^+)<K$ and \(V'(L^+)>M/\sqrt{\sigma_3}\), where \(M>0\) can be chosen arbitrarily large. 
Scenario 2 can arise from the two cases belonging to the first class in Step 2.
Scenario 3 can arise from the three cases belonging to the second class in Step 2. In Step 2 the values of $v_3(L^+)$ and $v_3'(L^+)$ have been prescribed in the construction of $V$.
We let $x_0$ and $v_3$ be given by the maximal solution of the following initial value problem with the prescribed data: 
\begin{empheq}[left=\empheqlbrace]{equation}\label{equation_summit}
\begin{aligned}
&\sigma_3 v_3'' = h_0(v_3) -v_3/\gamma,\quad &&\text{for } x\in(L,x_0),\\
&v_3(L) = V(L^+), \quad v_3'(L) = V'(L^+), &&\\
&v_3(x)\leq K,\quad \text{ and }\quad  v_3'(x)>0,  &&\text{ for } x\in(L,x_0).
\end{aligned}
\end{empheq}
Lemma~\ref{lem:generic_ODE} implies that, for $x\in(L,x_0)$,
\begin{equation}\label{eq:energy_Step3}
\frac{1}{2}v_3'(x)^2 = \frac{1}{2}V'(L^+)^2  + \frac{1}{\sigma_3} \int_{v_3(L)}^{v_3(x)} (h_0(v)-v/\gamma) dv.
\end{equation}
In Scenario 2, because $v_3(L)\geq v_0$, condition \eqref{condition_H1}.(iii) implies that the integral is non-negative, hence $v_3'(x)\geq V'(L^+)>0$ for all $x\in(L,x_0)$. Lemma~\ref{lem:generic_ODE} then implies that $v_3(x_0^-)=K$.

In Scenario 3, condition \eqref{condition_H1} implies that the integral term in \eqref{eq:energy_Step3} is uniformly bounded below, namely 
\begin{equation*}
\int_{v_3(L)}^{v_3(x)} (h_0(v)-v/\gamma) dv \geq \inf_{0\leq z_1\leq z_2\leq v_0} \int_{z_1}^{z_2} (h_0(v)-v/\gamma) dv.
\end{equation*}
Since the construction allows \(M\) in \(V'(L^+)>M/\sqrt{\sigma_3}\) to be chosen arbitrarily large, we may choose \(M\), independently of \(p\), so that \(v_3'(x)\geq\delta_p>0\) until \(v_3\) reaches \(v_0\), where \(\delta_p\) may depend on \(p\). Condition~\eqref{condition_H1}.(iii) then implies that \(v_3'\) remains positive. Lemma~\ref{lem:generic_ODE} therefore implies that \(x_0\) is finite and \(v_3(x_0^-)=K\).

\noindent
\textbf{Step 4:} The conclusion. In summary, in all the cases of Theorem \ref{thm_barrier_heterogeneous} we are able to construct a valid increasing profile $V$ that eventually reaches $V(x_0)=K$ at some point $x_0$, during the construction of $v_1$, $v_2$ or $v_3$. 
Since $K\geq v_0$, the constant extension of $V$ to the right of $x_0$ satisfies \eqref{system_trapping_reduced} by  condition \eqref{condition_H1} and, in cases \ref{thm_heterogeneity_type1} and \ref{thm_heterogeneity_type2}, by the corresponding hypothesis on $f(v,x)$. 
Moreover, $[bV']_{x_0}=-b(x_0^-)V'(x_0^-)\leq0$, so the transmission inequality holds at $x_0$. The thresholds used in the five cases depend only on the fixed data and the tail construction, and not on $K_0$.
At each step, the construction ensures that $V$ has all the properties required in Lemma \ref{lem:symmetric_stationary_barriers} and therefore defines a barrier as required in Theorem~\ref{thm_barrier_heterogeneous}.

\end{proof}

\section{Failure of propagation in the double-diffusive case: proof of Theorem \ref{thm_double_diffusive}}\label{section_proof_double_diffusive}

\begin{proof}
     We are assuming that $(V,V/\tilde{\gamma},-V,-V/\tilde{\gamma})$, with $0\leq V \leq K$, is a barrier of system \eqref{FHN_pde_general} with $D=0$ and $\tilde{\gamma}$ in place of $\gamma$. By Lemma \ref{lem:symmetric_stationary_barriers}, this assumption implies that $V$ satisfies equation \eqref{system_trapping_reduced}, with $\tilde{\gamma}$ in place of $\gamma$, namely
     \begin{equation}\label{eq:V_tildegamma}
     \frac{1}{a}\partial_x(b\partial_x  V) \leq \min \{-f(V,x),f(-V,x)\} -\frac{V}{\tilde{\gamma}},
    \end{equation}
    Moreover, $V$ satisfies $[b\partial_x V]_\xi \leq 0$ at every interior interface $\xi$.
    Let $W:=V/\tilde{\gamma}$. Since the coefficient $b=b_0$ is constant, the transmission inequality for $V$ implies
    \begin{equation*}
    [b\partial_x V]_\xi \leq 0, \quad \text{ and } \quad [\partial_x W]_\xi \leq 0, 
    \end{equation*}
     at every interior interface $\xi$ (defined by $V$ and $W$). Let \(L_K\) be a Lipschitz constant for \(f(\cdot,x)\) on \([-K,K]\), chosen uniformly in \(x\), and set \(C:=\max\{L_K,1\}\). Since \(f(0,x)=0\) and \(0\leq V\leq K\), it follows that
     \begin{equation*}
     \bigg(\min\{f(-V,x),-f(V,x)\} - \frac{V}{\tilde{\gamma}}\bigg)_+\leq C V \qquad  \text{for all }x\in\R.
     \end{equation*}
     Set $d^*:=(\gamma-\tilde{\gamma})/C$. If \(\delta(x)a(x)/b_0 \leq d^*\), then equation \eqref{eq:V_tildegamma} implies (recall $V\geq 0$)
     \begin{equation*}
     \begin{aligned}
             \delta(x)\partial_x^2W&=\frac{\delta(x)a(x)}{b_0\tilde{\gamma}}\frac{1}{a(x)}\partial_x( b_0\partial_x V) \\
             &\leq \frac{\gamma-\tilde{\gamma}}{C\tilde{\gamma}}\biggl( \min\{f(-V,x),-f(V,x)\} - \frac{V}{\tilde{\gamma}} \biggr)_+ \leq (\gamma-\tilde{\gamma})\frac{V}{\tilde{\gamma}}=-V+\gamma W.
     \end{aligned}
     \end{equation*}
     In summary, we have shown that $(V,W)$ satisfies
     \begin{equation*}
         \begin{split}
             &\frac{1}{a(x)}\partial_x(b_0\partial_x  V) \leq \min \{-f(V,x),f(-V,x)\} -W,\\
             &\delta(x)\partial_x^2W\leq -V+\gamma W,
         \end{split}
     \end{equation*}
     with transmission inequalities 
    \begin{equation*}
    [b_0\partial_x V]_\xi \leq 0, \quad \text{ and } \quad [\partial_x W]_\xi \leq 0, 
    \end{equation*}
at every interior interface $\xi$ in a common refinement of the partitions associated with $a$, $\delta$, and $V$. Analogous to Lemma 
\ref{lem:symmetric_stationary_barriers}, the preceding inequalities, together with the corresponding symmetric lower inequalities and the transmission inequalities, show directly that \( (V,V/\tilde{\gamma},-V,-V/\tilde{\gamma}) \) is a stationary barrier for system \eqref{FHN_pde_general}.
\end{proof}

\section{Almost stationary barriers and bounds on propagation}\label{section_moving_barrier}

In this section, we construct time-dependent upper and lower solutions that control the expansion of suitably trapped solutions. After suitable translations and reflections, these barriers bound the threshold-exceedance regions of solutions whose initial data lie between the barrier components and that satisfy the hypotheses of the comparison principle. 

We consider system \eqref{FHN_pde_general} with $D=0$ and with $a$ and $b$ constant and positive. 
Let $\varepsilon=\tau^{-1}$, fix $c>0$, and introduce the normalized moving coordinate 
\( \xi=\sqrt{a/b} x+ct. \) 
We seek symmetric barriers of the form 
\( \bar v(x,t)=v(\xi), \underaccent{\bar}{v}(x,t)=-v(\xi),\bar w(x,t)=w(\xi), \underaccent{\bar}{w}(x,t)=-w(\xi). \)
Writing $u=v'$, where the prime denotes differentiation with respect to $\xi$, the barrier inequalities \eqref{system_sub_super_solutions} reduce to 
\begin{empheq}[left=\empheqlbrace]{equation}
\begin{aligned}
&v'=u,\\ 
&u'\leq \min\{-f(v),f(-v)\}+cu-w,\\ 
&w'\geq \dfrac{\varepsilon}{c}(v-\gamma w). 
\end{aligned} 
\end{empheq}
Indeed, \( \partial_t\bar v=cv', \partial_x\bar v=\sqrt{a/b} v', \frac{1}{a}\partial_x(b\partial_x\bar v)=v'',  \partial_t\bar w=cw'. \) 
The upper activator and recovery inequalities are therefore equivalent to the second and third inequalities above, and the corresponding lower inequalities follow by symmetry. 

If $v'$ has a downward jump at $\xi=q$, the corresponding moving interface in the original coordinate is 
\( x_q(t)=\sqrt{b/a}\bigl(q-ct\bigr). \)
Since \( b \partial_x\bar v=\sqrt{ab} v'\), the condition $[v']_q\leq0$ implies the required transmission inequality \( [b \partial_x\bar v]_{x_q(t)}\leq0. \) 

The corresponding lower-barrier inequality follows by symmetry. No transmission in\-e\-qual\-i\-ty is required for the recovery components because $D=0$. Finally, the level sets of $\xi$ move to the left with physical speed $\sqrt{b/a} c$.

\subsection{Proof of Theorem \ref{thm_barriers_conduction_block}}
\begin{proof}
The theorem considers the case $f(v,x) = v(1-v) (v-\alpha)$ with $\alpha\in(1/2,1)$. Since $f(-v)+f(v) = -2(1+\alpha) v^2$, $\min\{-f(v),f(-v)\} = -f(v)$ for $v\geq 0$. Our construction aims for a positive $v$; hence, the first-order system for the construction of the barrier becomes
\begin{empheq}[left=\empheqlbrace]{equation}\label{eq:uvw-first-order}
\begin{aligned}
     &v' = u,\\
     &u' \leq -f(v)+cu - w, \\
     &w' \geq \dfrac{\varepsilon}{c} (v-\gamma w).
\end{aligned}
\end{empheq}
with transmission inequality $[v']_q \leq 0$ whenever $v'$ is discontinuous at $\xi = q$. Throughout the construction, we frequently consider this system with equalities instead of inequalities, which we refer to as \eqref{eq:uvw-first-order}$^=$. As in the proof of Theorem~\ref{thm_barrier_heterogeneous}, we construct the monotonically increasing barrier from left to right in stages. In the first stage, we construct a tail for $u,v$ and $w$ that decays exponentially to the left, and reaches a uniformly strictly positive height to the right. Unlike in the stationary case, we do not have an explicit formula for a condition at $\xi = 0$ that yields a direct construction; hence, we need a non-explicit and slightly more involved approach to construct the tail. In the second stage, the climb, we show that the properties of the constructed tail ensure that, for small enough $\varepsilon$, $v$ can climb to the prescribed large value while $w$ remains small. In the final stage, the summit, we finish constructing $v$ and $w$ so they climb and plateau above the required levels. Along the entire proof $c = c(\varepsilon)$.

\noindent
{\textbf  Step 1:} The approach. We use a linearization argument to construct a solution of \eqref{eq:uvw-first-order}$^=$ that approaches $(v,u,w)=(0,0,0)$ as $\xi\to-\infty$ and grows as $\xi$ increases, with $v$ reaching a positive value bounded away from $0$, uniformly in $\varepsilon$. Denote $X= (v,u,w)$. Linearization around $X=(0,0,0)$, leads to the system $\frac{d}{d\xi}X=A~X$ with
\begin{equation}\label{eq:A-matrix}
A=
\begin{pmatrix}
0 & 1 & 0\\[6pt]
\alpha & c & -1\\[8pt]
\dfrac{\varepsilon}{c} & 0 & -\dfrac{\varepsilon\gamma}{c}
\end{pmatrix},
\end{equation}
with characteristic equation $\det(A-\lambda I)=
(\lambda^2-c\lambda-\alpha) (\lambda+\frac{\varepsilon\gamma}{c})+\frac{\varepsilon}{c}=0$. 
Set $\delta(\varepsilon)=\varepsilon/c(\varepsilon)$, which goes to 0 as $\varepsilon\to 0$. The eigenvalues of $A$ are represented implicitly via the equation $P(\lambda, c , \delta) = (\lambda^2-c \lambda-\alpha) (\lambda+\delta\gamma)+\delta = 0$.
Let $\lambda_k(c, \delta), k=1,2,3$, be the three eigenvalues of $A$ in decreasing order. Because $\alpha >0$, the eigenvalues $\lambda_k(0,0)$ are real-valued and simple, and the implicit function theorem implies that we can solve uniquely for the eigenvalues around $(\lambda, c,\delta) = (\lambda_k(0,0), 0,0)$. A Taylor expansion in $\delta$ provides the following asymptotics, which is uniform in $\lvert c\rvert \leq \sqrt{\alpha}$,
\begin{subequations}
\begin{align}
\lambda_1(c,\delta) &=
\frac{c+\sqrt{c^2+4\alpha}}{2}
- \delta \frac{2}{\sqrt{c^2+4\alpha} \bigl(\sqrt{c^2+4\alpha}+c\bigr)} +O\Bigl(\delta^2\Bigr),
\label{eq:lambda1}\\ 
\lambda_2(c,\delta) &=
\delta \frac{1-\alpha\gamma}{\alpha }+O\Bigl(\delta^2\Bigr),
\label{eq:lambda2}\\ 
\lambda_3(c,\delta) &=
\frac{c-\sqrt{c^2+4\alpha}}{2}
- \delta \frac{2}{\sqrt{c^2+4\alpha} \bigl(\sqrt{c^2+4\alpha}-c\bigr)} +O\Bigl(\delta^2\Bigr).
\label{eq:lambda3}
\end{align}
\end{subequations}
Since we are interested in solutions that decay exponentially as $\xi\to-\infty$, we need to approach $(0,0,0)$ in the direction of an eigenvector corresponding to a positive eigenvalue. The sign of the small eigenvalue $\lambda_2$ depends on $1-\alpha\gamma$, but this eigenvalue is not used to construct the tail; the construction below uses the positive eigenvalue $\lambda_1$.

\begin{remark}
    The appropriate stationary comparison comes from the independently de\-ri\-ved $D=0$ scalar barrier equation. Its linearization at the rest state yields \( \lambda^2 = (-f_0'(0)-1/\gamma)/\sigma\), where $\sigma=b/a$. Hence, a positive real spatial exponent in the stationary construction requires \( -f_0'(0)>1/\gamma\). For the cubic $f_0(v)=v(1-v)(v-\alpha)$, this becomes $\alpha>1/\gamma$. By contrast, the moving construction uses the positive eigenvalue bifurcating from $\sqrt{\alpha}$ as $(c,\varepsilon/c)\to(0,0)$ and therefore does not require this stationary damping condition.
\end{remark}

Define $\lambda_k(\varepsilon):=\lambda_k(c(\varepsilon),\delta(\varepsilon))$,
$k=1,2,3$. Since $(c(\varepsilon),\delta(\varepsilon))\to(0,0)$ as
$\varepsilon\to0$, there exists $\varepsilon_0>0$ such that
$\lambda_1(\varepsilon)>\lambda_2(\varepsilon)>\lambda_3(\varepsilon)$ are
real and $\lambda_1(\varepsilon)>0$ for every
$\varepsilon\in(0,\varepsilon_0]$. We assume from now on that
$0<\varepsilon\leq\varepsilon_0$. For the remainder of the proof, write
$\kappa=\kappa(\varepsilon):=\lambda_1(\varepsilon)>0$. To study the
linearized system at $\xi=-\infty$, we introduce the change of variables
\begin{equation}\label{eq:change-vars}
s=e^{\kappa \xi},\qquad v(\xi)=s V(s),\qquad u(\xi)=s U(s),\qquad w(\xi)=s W(s).
\end{equation}
Then $\dfrac{ds}{d\xi}=\kappa s$, so $\dfrac{d}{d\xi}=\kappa s \dfrac{d}{ds}$, and $\dfrac{d}{d\xi}u=\kappa s \frac{d}{ds}(s U) = \kappa s(U+sU')$. Similarly for $v$ and $w$.
With this change of variable, and dividing by $\kappa s$, equation \eqref{eq:uvw-first-order}$^=$ is transformed into the following equation for $s>0$,
\begin{equation}\label{eq:UVW-system}
\begin{cases}
s \frac{d}{ds}V = -V+\dfrac{1}{\kappa} U,\\[8pt]
s \frac{d}{ds}U = -\dfrac{1}{\kappa} V(1-sV)(sV-\alpha) + \Bigl(\dfrac{c}{\kappa}-1\Bigr)U
-\dfrac{1}{\kappa} W,\\[10pt]
s \frac{d}{ds}W = \dfrac{\varepsilon}{c\kappa} (V-\gamma W) -W.
\end{cases}
\end{equation}
Since $s=0$ corresponds to $\xi=-\infty$, a bounded solution of \eqref{eq:UVW-system} in a neighborhood of $s=0$, corresponds to a solution of \eqref{eq:uvw-first-order}$^=$ that decays as $e^{\kappa\xi}$ when $\xi\to -\infty$.
Since
\[
V(1-sV)(sV-\alpha)=-\alpha V+s(1+\alpha)V^2-s^2V^3,
\]
letting $y=(V,U,W)$, we can write the system \eqref{eq:UVW-system} as
\begin{equation}\label{eq:syMy}
s y'(s)=\mathcal{M} y(s)+s f(s,y(s)),
\end{equation}
with $\mathcal{M}=\kappa^{-1} A-I$ and $f(s,y) = \bigl(0,-\kappa^{-1}\bigl((1+\alpha)V^2-sV^3\bigr) , 0\bigl)$. Observe that both $A$ and $\mathcal{M}$ depend on $\varepsilon$ and $c(\varepsilon)$. We must keep this in mind, since we later require estimates uniform in $\varepsilon$. 
The eigenvalues $\mu_i$ of $\mathcal{M}$ and the eigenvalues $\lambda_i$ of $A$ are related by $\mu_i=\lambda_i/\kappa-1$. Since $\kappa=\lambda_1>\lambda_2>\lambda_3$, we have $\mu_1=0$ and $\mu_2,\mu_3<0$. A direct computation gives 
\begin{equation}\label{kernel_M}
    \textsf{Ker}(\mathcal{M}) = \{a y_* \mid a\in  \R\}, \quad\text{with }y_*=
\biggl(1, \kappa, \frac{\varepsilon}{c\kappa+\varepsilon\gamma} \biggr).
\end{equation}
The next lemma and corollary establish an existence result for \eqref{eq:syMy}, and then provide estimates on the solution and interval of existence that are uniform in $\varepsilon$.

\begin{lemma}[Local existence of a Fuchsian type system]\label{lem:y_exists}
Let $y_*=(1,\kappa,\varepsilon/(c\kappa+\varepsilon\gamma))\in \textsf{Ker}(\mathcal{M})$. There exist $\tilde{S} >0$ and an analytic solution of \eqref{eq:syMy} in $(-\tilde{S},\tilde{S})$ that satisfies $y(0)=y_*$, in particular $V(0)=1$. Moreover, this solution is unique in the class of analytic solutions of \eqref{eq:syMy} satisfying
\(y(0)=y_*\).
\end{lemma}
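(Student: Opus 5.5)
We look for $y$ as a convergent power series $y(s)=\sum_{n\ge0} y_n s^n$ with $y_0=y_*$, and then establish convergence by a majorant argument. This is the standard treatment of a Fuchsian singular point where the resonant eigenvalue $\mu_1=0$ is matched by the chosen initial vector and no other eigenvalue of $\mathcal M$ is a positive integer.

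\textbf{Step 1: formal recursion.} The nonlinearity $f(s,y)=\bigl(0,-\kappa^{-1}((1+\alpha)V^2-sV^3),0\bigr)$ is a polynomial in $(s,y)$. Substituting the series into \eqref{eq:syMy} and comparing coefficients of $s^n$ gives:
\begin{itemize}
\item at order $0$, the condition $\mathcal M y_0=0$, which holds because $y_*\in\textsf{Ker}(\mathcal M)$ by \eqref{kernel_M};
\item for $n\ge1$,
\[
(nI-\mathcal M)\,y_n = F_{n-1}(y_0,\dots,y_{n-1}),
\]
where $F_{n-1}$ is the coefficient of $s^{n-1}$ in $f(s,y(s))$. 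It depends only on $y_0,\dots,y_{n-1}$ through sums of products of two or three earlier coefficients.
\end{itemize}
The eigenvalues of $\mathcal M$ are $0,\mu_2,\mu_3$ with $\mu_2,\mu_3<0$. Hence $n$ is never an eigenvalue for $n\ge1$, and $nI-\mathcal M$ is invertible. The coefficients $y_n$ are therefore uniquely determined recursively. This already gives uniqueness within analytic solutions with $y(0)=y_*$: any such solution has a Taylor series satisfying the same recursion.

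\textbf{Step 2: convergence.} We need a uniform bound $\|(nI-\mathcal M)^{-1}\|\le C_0/n$ for $n\ge1$.
\begin{itemize}
\item For large $n$, the Neumann series gives $\|(nI-\mathcal M)^{-1}\|\le 1/(n-\|\mathcal M\|)\le 2/n$ once $n\ge 2\|\mathcal M\|$.
\item The finitely many remaining $n$ are handled by invertibility.
\end{itemize}
For the majorant, let $\phi(s)=\sum_n \eta_n s^n$ solve
\[
\phi = \|y_*\| + C_0 s\,G(s,\phi),
\]
where $G(s,\phi)=\kappa^{-1}\bigl((1+\alpha)\phi^2+s\phi^3\bigr)$ majorizes $f$ coefficientwise. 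The implicit function theorem at $(s,\phi)=(0,\|y_*\|)$ gives a function $\phi$ analytic near $0$. By induction $\|y_n\|\le\eta_n$, using $n\|y_n\|\le C_0 \|F_{n-1}\|$ together with the dropped factor $1/n\le1$. Hence the series for $y$ converges on some $(-\tilde S,\tilde S)$ and defines an analytic solution with $V(0)=1$, since the first component of $y_*$ is $1$.

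\textbf{Main obstacle.} The step needing care is the resolvent bound $\|(nI-\mathcal M)^{-1}\|\le C_0/n$, especially because later results will need $\tilde S$ uniform in $\varepsilon$. For the present lemma, with $\varepsilon$ fixed, mere invertibility suffices.

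For the later uniformity, we would diagonalize $\mathcal M$. Its eigenvalues converge to the simple eigenvalues $0$, $-1+\lambda_2/\kappa\to-1$, and $\lambda_3/\kappa-1\to-2$ as $\varepsilon\to0$, and its entries stay bounded because $\kappa\to\sqrt\alpha$ and $\varepsilon/c\to0$. So the eigenvector matrix and its inverse are uniformly bounded, and $|n-\mu_i|\ge n$ gives $C_0$ independent of $\varepsilon$. Likewise $\|y_*\|$ and $\kappa^{-1}$ are uniformly bounded, so the majorant radius, and hence $\tilde S$, can be taken independent of $\varepsilon\in(0,\varepsilon_0]$.
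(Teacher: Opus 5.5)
Your proposal is correct and follows the paper's route. Both proofs use the power-series ansatz with $y_0=y_*$, solve the coefficients recursively through $(nI-\mathcal M)^{-1}$ (invertible because the spectrum of $\mathcal M$ is nonpositive), bound the resolvent by a Neumann series plus finitely many exceptional $n$, establish convergence by a majorant argument, and get uniqueness because any analytic solution's Taylor coefficients satisfy the same recursion. The one variation is the majorant: you use Cauchy's implicit majorant $\phi=\|y_*\|+C_0\,sG(s,\phi)$ and drop the $1/n$, whereas the paper compares the partial sums $B_N$ with the explicit Riccati solution $\omega(s)=m(1-2\bar K_\varepsilon m s)^{-1}$, which gives an explicit radius that it reuses for the uniform estimates in Corollary~\ref{cor:unif_overS}.
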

\begin{proof}
Let us consider the ansatz $y=\sum_{i=0}^\infty y_i s^i$, with $y_0=y_*$ and with $y_i=(V_i, U_i, W_i)\in\mathbb{R}^3$, $i\geq 1$, to be determined.
Replacing the ansatz in \eqref{eq:syMy}, we formally obtain
\begin{equation}\label{eq:yeqsum}
\sum_{i\geq1}iy_is^i=\sum_{i\geq 0}\mathcal{M}y_i s^i+\sum_{i\geq 1}g_is^i,
\end{equation}
where only the second component of $g_i\in\mathbb{R}^3$ might be nonzero, namely
\begin{equation*}
g_i=
\Biggl( 0~,\quad 
-\dfrac{1+\alpha}{\kappa}\sum_{j+k=i-1}V_j V_k
+\dfrac{1}{\kappa}\sum_{j+k+l=i-2}V_j V_k V_l~,\quad 
0
\Biggr)
\qquad i\ge1.
\end{equation*}
Equating the coefficients in \eqref{eq:yeqsum} gives the system of equations
\begin{equation}\label{eq:sys_y_is}
\begin{split}
&\mathcal{M}y_0=0,\\
&y_i=(iI-\mathcal{M})^{-1}g_i, \quad i\geq 1.
\end{split}
\end{equation}
The equation $\mathcal{M}y_0=0$ is satisfied since $y_0=y_*\in\textsf{Ker}(\mathcal{M})$, while the equations for $i\geq1$ can be solved recursively because each $g_i$ depends only on $y_0,\dots,y_{i-1}$ and, since the eigenvalues of $\mathcal{M}$ are non-positive, $(iI-\mathcal{M})$ is invertible for all $i\geq 1$.

Let $y_i$, $i\geq 1$, be the solution of \eqref{eq:sys_y_is}. We now prove that the corresponding series $y$ has a positive radius of convergence. We will use a standard majorant series argument. Let $b_i=\lVert y_i\rVert_\infty\in\mathbb{R}$, $i\geq 0$, and let $m=\lVert y_0\rVert_\infty$. For $N\in\mathbb{N}$ define the polynomial $B_N(s)=\sum_{i=0}^N b_i s^i$.

For the fixed value of \(\varepsilon\) considered in this lemma, there exists \(K_\varepsilon>0\) such that
\(\lVert (iI-\mathcal M)^{-1}\rVert_\infty\leq K_\varepsilon/i\) for every
\(i\geq1\). Indeed, \((iI-\mathcal M)^{-1}=i^{-1}(I-\mathcal M/i)^{-1}\). The matrices \(I-\mathcal M/i\) are invertible because the eigenvalues of
\(\mathcal M\) are nonpositive, and \(\lVert (I-\mathcal M/i)^{-1}\rVert_\infty\leq2\) whenever \(i>2\lVert \mathcal M\rVert_\infty\), by the Neumann series. The remaining finitely many inverses are bounded, which proves the claim. Therefore,
equation \eqref{eq:sys_y_is} and the definition of $g_i$ imply that

\begin{equation*}
i b_i\leq K_\varepsilon\Bigg[ \dfrac{1+\alpha}{\kappa} \sum_{j+k=i-1}b_jb_k + \dfrac{1}{\kappa}\sum_{j+k+l=i-2}b_jb_kb_l\Bigg], \quad i\geq 1.
\end{equation*}
Here an empty sum is understood to be zero. Assume $s\geq 0$, multiply by $s^{i-1}$ and sum from $i=1,\dots,N$ to obtain, for $N\geq 1$,
\begin{equation*}
B_N'(s)\leq \dfrac{K_\varepsilon(1+\alpha)}{\kappa} B_{N-1}(s)^2+\dfrac{K_\varepsilon}{\kappa} s B_{N-2}(s)^3,
\end{equation*}
where $B_0 (s)= b_0$, $B_{-1}(s) =0$. Let $\bar{K_\varepsilon}=K_\varepsilon(1+\alpha)/\kappa$. Since $B_{N-2}(s),B_{N-1}(s)\leq B_N(s)$ for $s\geq 0$, we get 
\begin{equation}\label{eq:boundBNsimple}
B_N'(s)\leq \bar{K_\varepsilon} B_N(s)^2+\dfrac{K_\varepsilon}{\kappa} s B_N(s)^3.
\end{equation}
Let us consider the function $\omega(s)=m(1-2\bar{K_\varepsilon}m s)^{-1}$, for $s\in \bigl[0, 1/(2\bar{K_\varepsilon}m)\bigr)$, and let $\tilde{S}< (2\bar{K_\varepsilon}m)^{-1}$ be such that
\[
s \omega(s) \leq 1 < (1+\alpha),\qquad \text{for all } s\in[0,\tilde{S}].
\]
These quantities are independent of $N$. Then
\begin{equation}\label{eq:eqomega}
\omega'(s)=2\bar{K_\varepsilon}\omega(s)^2\geq \bar{K_\varepsilon}\omega(s)^2+\dfrac{K_\varepsilon}{\kappa} s \omega(s)^3,\qquad s\in[0,\tilde{S}].
\end{equation}
Since $B_N(0)=m=\omega(0)$, equations \eqref{eq:boundBNsimple} and \eqref{eq:eqomega} imply that
\[
B_N(s)\leq \omega(s),\qquad s\in[0,\tilde{S}].
\]
In particular,
\[
b_i \tilde{S}^i\leq B_N(\tilde{S})\leq \omega(\tilde{S}),\qquad \forall  i=1,\dots,N,
\]
but the right-hand side is valid for any $N\in\mathbb{N}$, hence $\lVert y_i\rVert_\infty=b_i\leq \omega(\tilde{S})/\tilde{S}^i$ for all $i\geq 0$. This implies that $y$ is analytic with a convergence radius of at least $\tilde{S}$, and therefore $y$ is an analytic solution of \eqref{eq:syMy} in $(-\tilde{S},\tilde{S})$ with $y(0)=y_*$. 

Any analytic solution with the prescribed value at $s=0$ has Taylor coefficients satisfying the same recursion; hence all coefficients coincide with those constructed above and the solution is unique among analytic ones.
\end{proof}

Lemma \ref{lem:y_exists} provides explicit information on the radius of convergence and on bounds on the solution $y$, which are presented in the following corollary. Reducing \(\varepsilon_0\) if necessary, we also assume that \(c(\varepsilon)\leq1\) for \(0<\varepsilon\leq\varepsilon_0\).

\begin{corollary}[A priori estimates for the solution]\label{cor:unif_overS}
    Let $\delta>0$ be such that, for every $0<\varepsilon \leq  \varepsilon_0$ we have
    \[
    \delta\leq \gamma \leq \frac{1}{\delta}, \quad 
    \delta\leq \alpha  \leq \dfrac{1}{\delta},\quad
    \delta\leq \kappa \leq \dfrac{1}{\delta}, \quad  
    \delta\leq \lVert y_*\rVert_\infty \leq \dfrac{1}{\delta}, \quad
    0< \varepsilon \leq \dfrac{1}{\delta}, \quad
    0< \frac{\varepsilon}{c} \leq \dfrac{1}{\delta}.
    \]
    Such a $\delta>0$ exists by the definitions of $\kappa$, $\lVert y_*\rVert_\infty$, and $\varepsilon_0$.
    There exist \(S_0=S_0(\delta)>0\), \(\bar S=\bar S(\delta)\in(0,S_0)\), and \(C=C(\delta)>0\) such that, for every \(0<\varepsilon\leq\varepsilon_0\), the solution provided by Lemma~\ref{lem:y_exists} extends analytically to \((-S_0,S_0)\) and, for every \(s\in[0,\bar S]\),
    \[
    \lVert y(s)\rVert_\infty\leq C, \quad 
    \lvert V(s) - V(0)\rvert \leq \frac{1}{2}\lvert V(0)\rvert , \quad 
    \text{and}\quad 
    \lvert U(s) - U(0)\rvert \leq \frac{1}{2}\lvert U(0)\rvert .
    \]
        
\end{corollary}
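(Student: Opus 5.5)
The plan is to rerun the majorant-series argument from the proof of Lemma~\ref{lem:y_exists}, but with every constant expressed in terms of $\delta$ instead of $\varepsilon$. The only place where that proof depends on $\varepsilon$ in an essential way is the constant $K_\varepsilon$ in $\lVert (iI-\mathcal M)^{-1}\rVert_\infty\le K_\varepsilon/i$. So the first and main step is to find $K=K(\delta)$ with $\lVert (iI-\mathcal M)^{-1}\rVert_\infty\le K/i$ for all $i\ge1$ and all $0<\varepsilon\le\varepsilon_0$.

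\emph{Bounding $\mathcal M$.} The entries of $A$ are $0,1,\alpha,c,-1,\varepsilon/c,-\varepsilon\gamma/c$. Since $c\le1$, each of these is bounded by a constant depending only on $\delta$. Because $\kappa\ge\delta$, the same holds for $\mathcal M=\kappa^{-1}A-I$, giving $\lVert\mathcal M\rVert_\infty\le C_1(\delta)$.

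\emph{Large $i$.} For $i>2C_1(\delta)$, the Neumann series gives $\lVert (iI-\mathcal M)^{-1}\rVert_\infty\le 2/i$.

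\emph{Small $i$.} For the finitely many $1\le i\le 2C_1(\delta)$ I would use the adjugate formula $(iI-\mathcal M)^{-1}=\operatorname{adj}(iI-\mathcal M)/\det(iI-\mathcal M)$. The numerator is bounded in terms of $\delta$. The denominator equals $\prod_k(i-\mu_k)$, and since the eigenvalues satisfy $\mu_k\le0$ and are real for $\varepsilon\le\varepsilon_0$, we get $\lvert\det(iI-\mathcal M)\rvert\ge i^3\ge1$. This is the obstacle step, and this determinant bound is what resolves it: it removes any need for uniform spectral gaps, which could degenerate because $\lambda_2\to0$. Together the two ranges give $K(\delta)$.

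\emph{Uniform radius and bound.} With $K(\delta)$ in hand, the constants $\bar K=K(1+\alpha)/\kappa$ and $K/\kappa$ are bounded by constants depending only on $\delta$, and $m=\lVert y_*\rVert_\infty\in[\delta,1/\delta]$. The majorant $\omega(s)=m(1-2\bar K m s)^{-1}$ is therefore dominated by $\omega_\delta(s)=\delta^{-1}(1-2\bar K_\delta\delta^{-1}s)^{-1}$, where $\bar K_\delta$ is the $\delta$-bound for $\bar K$. Choosing $\tilde S=S_0(\delta)$ with $S_0\,\omega_\delta(S_0)\le1$ and $S_0<\delta/(2\bar K_\delta)$ makes the comparison argument of Lemma~\ref{lem:y_exists} go through verbatim. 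This gives $\lVert y_i\rVert_\infty\le\omega_\delta(S_0)S_0^{-i}$ uniformly in $\varepsilon$, so $y$ is analytic on $(-S_0,S_0)$. Moreover, for $s\le S_0/2$ we get $\lVert y(s)\rVert_\infty\le\sum_i\omega_\delta(S_0)2^{-i}=:C(\delta)$.

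\emph{Initial-value estimates.} For $s\le S_0/2$ we also have
\[
\lVert y(s)-y_*\rVert_\infty\le\sum_{i\ge1}\omega_\delta(S_0)(s/S_0)^i\le 2\omega_\delta(S_0)\,s/S_0 .
\]
Since $V(0)=1$ and $U(0)=\kappa\ge\delta$, choosing
\[
\bar S=\min\Bigl\{\frac{S_0}{2},\frac{\delta S_0}{4\omega_\delta(S_0)}\Bigr\}
\]
yields $\lvert V(s)-V(0)\rvert\le\tfrac12 V(0)$ and $\lvert U(s)-U(0)\rvert\le\tfrac12U(0)$ on $[0,\bar S]$. All constants depend only on $\delta$, which completes the plan.
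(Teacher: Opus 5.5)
Your proposal is correct and follows the paper's proof. In both, the key step is a uniform resolvent bound obtained from the adjugate formula together with $\det(I-\mathcal M/i)=\prod_j(1-\mu_j/i)\geq 1$, after which the majorant argument of Lemma~\ref{lem:y_exists} is rerun with $\delta$-dependent constants. The differences are minor: the paper applies Hadamard's inequality for all $i$ at once instead of splitting off large $i$ with a Neumann series, and it bounds $y(s)-y_*$ via Cauchy's estimate for $y'$ and the mean value theorem, whereas you use the geometric tail of the coefficient bounds.
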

\begin{proof}
In the proof of Lemma~\ref{lem:y_exists}, convergence of the power series and control of its coefficients depend only on upper bounds for \(m=\lVert y_*\rVert_\infty\) and
\(\bar K_\varepsilon=K_\varepsilon(1+\alpha)/\kappa\).
The quantities \(\lVert y_*\rVert_\infty\), \(\alpha\), and \(\kappa\) are already bounded uniformly in terms of \(\delta\). It therefore remains to bound \(K_\varepsilon\) uniformly for \(0<\varepsilon\leq\varepsilon_0\).

Cramer's rule gives $(I-\mathcal{M}/i)^{-1}=\operatorname{adj}(I-\mathcal{M}/i)/\det(I-\mathcal{M}/i)$. Since $\mu_j \leq 0$ for $j=1, 2, 3$,  
\begin{equation*}
\det(I-\mathcal{M}/i) = \prod_{j=1}^3 \Bigl(1-\frac{\mu_j}{i}\Bigr)\geq 1.
\end{equation*}
Also, by Hadamard's inequality, each entry of $\operatorname{adj}(I-\mathcal{M}/i)$ is bounded by $\lVert I-\mathcal{M}/i\rVert_2^2$. Therefore 
\[
K_\varepsilon\leq 3 \sup_{i\geq 1}\lVert I-\mathcal{M}/i\rVert_2^2 \leq 3\Big(1+\sqrt{3}\lVert \mathcal{M}\rVert_\infty\Big)^2.
\]
All the entries of $\mathcal{M}=(\kappa^{-1}A-I)$ can be bounded uniformly in terms of $\delta$. 
By the preceding estimate, there exists \(K_0=K_0(\delta)>0\)
such that \(K_\varepsilon\leq K_0\) for every
\(0<\varepsilon\leq\varepsilon_0\). Moreover,
\(m=\lVert y_*\rVert_\infty\leq m_0:=\delta^{-1}\) and
\((1+\alpha)/\kappa\leq(1+\delta^{-1})/\delta\). Thus, with
\[
\Lambda_0:=K_0\frac{1+\delta^{-1}}{\delta},
\qquad
\omega_0(s):=\frac{m_0}{1-2\Lambda_0m_0s},
\]
the majorant argument in Lemma~\ref{lem:y_exists} applies uniformly
whenever \(s\geq0\), \(2\Lambda_0m_0s<1\), and
\(s\omega_0(s)\leq1\). We may therefore choose
\(S_0=S_0(\delta)>0\) so that these inequalities hold on
\([0,S_0]\). It follows that the power series defining \(y\) converges
for \(\lvert s\rvert <S_0\), uniformly with respect to
\(0<\varepsilon\leq\varepsilon_0\), and that
\[
\sum_{i\geq0}\lVert y_i\rVert_\infty \lvert s\rvert ^i\leq\omega_0(\lvert s\rvert)
\qquad (\lvert s\rvert \leq S_0).
\]
Set \(C_0:=\omega_0(S_0/2)\). From Cauchy's integral formula, it follows  that \(\lVert y'(s)\rVert_\infty\leq 4C_0/S_0\) for \(\lvert s\rvert \leq S_0/4\).
The mean value theorem then implies that
\[
\lvert V(s)-V(0)\rvert \leq\frac{1}{2}\lvert V(0)\rvert 
\quad\text{if}\quad
\lvert s\rvert \leq\frac{S_0}{4}
\min\biggl\{1,\frac{\lvert V(0)\rvert }{2C_0}\biggr\}.
\]
The analogous estimate holds for $U(s)$. Since \(V(0)=1\) and \(U(0)=\kappa\geq\delta\), the conclusions hold on \([0,\bar S]\), where
\[
\bar S:=\min\biggl\{\frac{S_0}{4},
\frac{S_0}{8C_0},\frac{\delta S_0}{8C_0}\biggr\}.
\]
Since \(\bar S\leq S_0/4\), the majorant estimate gives \(\lVert y(s)\rVert_\infty\leq\omega_0(s)\leq C_0\) for \(s\in[0,\bar S]\). Thus, the conclusions follow with \(C:=C_0\).
\end{proof}

Let us recap what we have achieved so far in constructing the tail.

For fixed $\alpha,\gamma>0$, the hypotheses of Lemma \ref{lem:y_exists} and Corollary \ref{cor:unif_overS} hold uniformly for $0<\varepsilon\leq \varepsilon_0$. Fix the resulting constants $C$ and $\bar{S}$, which depend only on the fixed parameters and the initial choice of $\varepsilon_0$, and define $\bar{\xi}=\bar{\xi}(\varepsilon):=\ln(\bar{S})/\kappa(\varepsilon)$. 
Since \(\bar S\) is independent of \(\varepsilon\) and \(\delta\leq\kappa(\varepsilon)\leq\delta^{-1}\), both \(\kappa(\varepsilon)\) and \(\bar\xi(\varepsilon)\) are uniformly bounded for \(0<\varepsilon\leq\varepsilon_0\). 
The transformation~\eqref{eq:change-vars} then defines a solution $X(\xi)=(v,u,w)$ of \eqref{eq:uvw-first-order}$^=$ on the interval $(-\infty,\bar{\xi}]$ such that
\begin{equation}\label{eqn_asymptotic_estimate}
(v,u,w)\sim e^{\kappa \xi}\biggl(1,\kappa,\frac{\varepsilon}{c}~\frac{1}{\kappa+\frac{\varepsilon}{c}\gamma}
\biggr),\quad \text{ as } \xi\to - \infty,
\end{equation}
Moreover
\begin{equation*}
\frac12e^{\kappa\xi}\leq v(\xi)\leq \frac32e^{\kappa\xi}, \qquad \frac{\kappa}{2}e^{\kappa\xi}\leq u(\xi)\leq \frac{3\kappa}{2}e^{\kappa\xi}, \qquad \xi\in(-\infty,\bar{\xi}]. 
\end{equation*}
In particular, since $e^{\kappa\bar{\xi}}=\bar S$, 
\[ 
\frac{\bar S}{2}\leq v(\bar{\xi})\leq\frac{3\bar S}{2}, \qquad \frac{\kappa\bar S}{2}\leq u(\bar{\xi}) \leq\frac{3\kappa\bar S}{2}. 
\]
The constants \(C\) and \(\bar S\) were chosen uniformly on the original interval \(0<\varepsilon\leq\varepsilon_0\) and therefore remain valid if \(\varepsilon_0\) is reduced later in the proof. Consequently,  the construction of the tail is such that $(v,u,w)\to(0,0,0)$ as $\xi\to-\infty$, while
 $v(\bar{\xi})$ and $u(\bar{\xi})$ are bounded away from zero, and bounded above, uniformly, for $0<\varepsilon\leq \varepsilon_0$. 

To finalize Step~1, we estimate $w$. For $\xi\leq\bar{\xi}$, integration and the equation for $w'$ in \eqref{eq:uvw-first-order}$^=$ give
\begin{equation*}
     e^{\frac{\varepsilon \gamma }{c} \xi} w(\xi) - 0 = \int_{-\infty}^\xi   \frac{d}{d\zeta}\Bigl( e^{\frac{\varepsilon \gamma }{c}\zeta} w\Bigr) d\zeta = \int_{-\infty}^\xi e^{\frac{\varepsilon \gamma}{c}\zeta} \Bigl(w'+\frac{\varepsilon\gamma}{c} w \Bigr) d\zeta
     =
     \frac{\varepsilon}{c} \int_{-\infty}^\xi e^{\frac{\varepsilon \gamma}{c}\zeta} v(\zeta) \,d\zeta .
\end{equation*}
Hence, for $\xi \in (-\infty,\bar{\xi}]$.
\begin{equation}\label{eqn_estimate_w}
     w(\xi) = \frac{\varepsilon}{c} \int_{-\infty}^{\xi} e^{\frac{\varepsilon \gamma}{c} (\zeta-\xi)} v(\zeta) d\zeta \leq \frac{\varepsilon}{c} \int_{-\infty}^{\xi} v(\zeta) d\zeta \leq \frac{\varepsilon}{c} \int_{-\infty}^{\xi}  \frac{3}{2} e^{\kappa \zeta} d\zeta 
     \leq \frac{3\varepsilon}{\kappa c}~\frac{1}{2}  e^{\kappa \xi}
     \leq \frac{3\varepsilon}{\kappa c}v(\xi).
\end{equation}
By reducing $\varepsilon_0$ if necessary (such that $3\gamma\varepsilon/\kappa c <1$ for all $\varepsilon<\varepsilon_0$) we also conclude from \eqref{eq:uvw-first-order}$^=$ that $w'>0$ and $w>0$ for all $\xi \in (-\infty,\bar{\xi})$.

The tail constructed in $(-\infty,\bar{\xi})$ is such that $v(\xi)$ reaches a level between $\bar{S}/2$ and $3\bar{S}/2$ at $\bar{\xi}$, which is uniformly bounded away from zero for $0<\varepsilon\leq \varepsilon_0$. By contrast, $w(\bar{\xi})\lesssim \frac{\varepsilon}{c}v(\bar{\xi})$ can be made arbitrarily small by reducing the value of $\varepsilon$. This is a key ingredient in the next step.

\noindent
{\textbf  Step 2: }The climb. We show that we can extend the solution of \eqref{eq:uvw-first-order}$^=$ to the right of $\bar{\xi}$ so that $v$ reaches an arbitrarily large value $K>v(\bar{\xi})$, after reducing $\varepsilon_0$ if necessary.

Let $F(r) = -\int_0^r f(\rho) d\rho, r\geq 0$. Since \(F'(r)=-f(r)=r(r-\alpha)(r-1)\) and $\alpha\in(1/2,1)$, the function \(F\) is strictly increasing on \([0,\alpha)\), decreasing on \((\alpha,1)\), and increasing on \((1,\infty)\). At the local minimum $r=1$, we have $F(1)=(2\alpha-1)/12>0$, hence, by reducing $\bar{S}$ if necessary, we can assume that 
$3\bar{S}/2<\alpha$ and $F(3\bar{S}/2)\leq F(1)/2$. Consequently,
\begin{equation*}
    \min_{r\geq \frac{3\bar{S}}{2}
    }F(r) = F\biggl(\frac{3\bar{S}}{2}\biggr)
    \quad \Longrightarrow\quad F(\bar{r})<F(r) \text{ for all } r>\bar{r} \text{ and $\bar{r}\in\frac{\bar{S}}{2}[1,3]$}.
\end{equation*}
Let
\[
H(\bar{r},r) := 
\begin{cases}
(F(r)-F(\bar{r}))/\sum_{j=1}^4 (r-\bar{r})^j, & \text{for } \bar{r}\in \frac{\bar{S}}{2}[1,3] \text{ and }r\in (\bar{r},\infty),\\[2pt]
F'(\bar{r}), &  \text{for } \bar{r}\in \frac{\bar{S}}{2}[1,3] \text{ and } r=\bar{r},\\[2pt]
1/4, & \text{for } \bar{r}\in \frac{\bar{S}}{2}[1,3] \text{ and } r=\infty.
\end{cases}
\]
Then $H$ is continuous and strictly positive on its compact extended domain. Hence,
\[
0< \eta :=\min \{ H(\bar{r},r): {\bar{r}\in \frac{\bar{S}}{2}[1,3], r\in [\bar{r},\infty]} \}
\]
and therefore $F(\bar{r})+\eta\sum_{j=1}^4 (r-\bar{r})^j \leq F(r)$ for every $\bar{r}\in \frac{\bar{S}}{2}[1,3]$ and $r\geq \bar{r}$. Note that $\eta$ depends only on $\bar{S}$ and $\alpha\in(1/2,1)$. In particular, we have
\begin{equation}\label{eq:F_quarticbound}
    F(v(\bar{\xi}))+\eta\sum_{j=1}^4 (r-v(\bar{\xi}))^j\leq F(r),\quad \text{ for all } r\geq v(\bar{\xi}).
\end{equation}
Let $K>v(\bar{\xi}^-)$ be fixed. Now consider $\xi_0$ and $(v,u,w)$ as the unique maximal solution of the following constrained initial value problem, which continues the tail solution of \eqref{eq:uvw-first-order}$^=$ beyond $\bar{\xi}$. For $\xi\in(\bar{\xi},\xi_0)$,
\begin{empheq}[left=\empheqlbrace]{equation}\label{eq:uvw-step2_version2}
\begin{aligned}
     &v' = u,\\
     &u' = -f(v)+ cu - w,\\
     &w' = \frac{\varepsilon}{c}(v-\gamma w),\\
     &v(\bar{\xi})=v(\bar{\xi}^-),\quad  u(\bar{\xi})=u(\bar{\xi}^-),\quad  w(\bar{\xi})=w(\bar{\xi}^-),\\ 
     &v(\xi)< K,\quad \text{ and }\quad  v'(\xi)=u(\xi)>u(\bar{\xi}^-)/2>0.
\end{aligned}
\end{empheq}
where $v(\bar{\xi}^-), u(\bar{\xi}^-),  w(\bar{\xi}^-)$ are the values from the tail. Using arguments similar to those in Lemma \ref{lem:generic_ODE}, it follows that such constrained maximal solution exists, that $\xi_0<\infty$, and that either $v(\xi_0^-)=K$, or $\liminf_{\xi\to \xi_0^-}u(\xi)=u(\bar{\xi}^-)/2$.

Observe that Step~1 gives $v(\bar{\xi})-\gamma w(\bar{\xi})>0$. Set $z=v-\gamma w$. If $z$ had a first zero $\xi_\ast\in(\bar{\xi},\xi_0)$, then \( w'(\xi_\ast)=\frac{\varepsilon}{c}z(\xi_\ast)=0 \) and hence \( z'(\xi_\ast) =u(\xi_\ast)-\gamma w'(\xi_\ast) =u(\xi_\ast)>0\), contradicting the fact that $\xi_\ast$ is the first point at which $z$ reaches zero from above. Therefore $v-\gamma w>0$ on $(\bar{\xi},\xi_0)$, and consequently $w'>0$ there.
Equation \eqref{eq:uvw-step2_version2} also implies  $v'>u(\bar{\xi}^-)/2>0$, and therefore $v$ is strictly increasing and positive for all $\xi\in (\bar{\xi},\xi_0)$. One consequence is that $w'\leq (\varepsilon/c)v$ for $\xi\in(\bar{\xi},\xi_0)$, and therefore 
\begin{equation*}
\begin{split}
    \int_{\bar{\xi}}^\xi w(s) v'(s)\, ds
    &\leq \int_{\bar{\xi}}^\xi \biggl(w(\bar{\xi})+\int_{\bar{\xi}}^s \frac{\varepsilon}{c}v(t)\, dt\biggr) v'(s)\, ds \\
    &= w(\bar{\xi})(v(\xi)-v(\bar{\xi}))
    + \frac{\varepsilon}{c}\int_{\bar{\xi}}^\xi \biggl(\int_{t}^\xi v'(s) \, ds\biggr)v(t) \, dt \\
    &= w(\bar{\xi})(v(\xi)-v(\bar{\xi}))
    + \frac{\varepsilon}{c}\int_{\bar{\xi}}^\xi (v(\xi)-v(t))v(t)\, dt\\
    &\leq  \frac{\varepsilon}{c}\frac{3\bar{S}}{2\kappa}(v(\xi)-v(\bar{\xi})) + \frac{\varepsilon}{c}(\xi-\bar{\xi})(v(\xi)-v(\bar{\xi}))v(\xi)\\
    &\leq  \frac{\varepsilon}{c}\biggl(\frac{3\bar{S}}{2\kappa}(v(\xi)-v(\bar{\xi})) + \frac{2}{u(\bar{\xi}^-)}(v(\xi)-v(\bar{\xi}))^2v(\xi)\biggr)\\    
    &\leq  \frac{\varepsilon}{c}\biggl(\frac{3\bar{S}}{2\kappa}(v(\xi)-v(\bar{\xi})) + \frac{4}{\kappa\bar{S}}(v(\xi)-v(\bar{\xi}))^2v(\bar{\xi}) +\frac{4}{\kappa\bar{S}}(v(\xi)-v(\bar{\xi}))^3\biggr)\\    
    &\leq  \frac{\varepsilon}{c}\biggl(\frac{3\bar{S}}{2\kappa}(v(\xi)-v(\bar{\xi})) + \frac{6}{\kappa}(v(\xi)-v(\bar{\xi}))^2 +\frac{4}{\kappa\bar{S}}(v(\xi)-v(\bar{\xi}))^3\biggr),
\end{split}
\end{equation*}
where we used that $v$ is increasing and that $v(\xi)-v(\bar{\xi})\geq (\xi-\bar{\xi})u(\bar{\xi}^-)/2$. Since $\varepsilon/c\to0$ as $\varepsilon\to0$, the estimate above implies that there exists $\varepsilon_1\leq \varepsilon_0$ depending only on the uniform Step~1 constants and $\alpha$, such that if $0<\varepsilon\leq \varepsilon_1$ then
\begin{equation}\label{eq:intwu_leq_F}
\begin{split}
    \int_{\bar{\xi}}^\xi w(s) v'(s)\, ds
    &\leq F(v(\xi))-F(v(\bar{\xi})) \quad \text{ for all } \quad \xi\in [ \bar{\xi},\xi_0).
\end{split}
\end{equation}
Next, multiply the equation for $u'$ in \eqref{eq:uvw-step2_version2} by $u=v'$, integrate, and make the change of variables $\rho=v(s)$. For any $\xi\in(\bar{\xi},\xi_0)$ and $0<\varepsilon\leq \varepsilon_1$, this gives
\begin{equation}\label{eq:vprime_estim_step2}
\begin{split}
\frac12\bigl(u(\xi)\bigr)^2
&=\frac12\bigl(u(\bar{\xi})\bigr)^2
+c\int_{\bar{\xi}}^\xi\bigl(u(s)\bigr)^2\,ds
-\int_{v(\bar{\xi})}^{v(\xi)} f(\rho)\,d\rho 
-\int_{\bar{\xi}}^\xi w(s) u(s)\,ds\\
&\geq \frac12\bigl(u(\bar{\xi})\bigr)^2
+(F(v(\xi))-F(v(\bar{\xi})))
-\int_{\bar{\xi}}^\xi w(s) v'(s)\,ds\\
&\geq \frac12\bigl(u(\bar{\xi})\bigr)^2.
\end{split}
\end{equation}
This implies that $u(\xi) \geq u(\bar{\xi})$ for all  $\xi\in(\bar{\xi},\xi_0)$. Among the two alternatives at $\xi_0^-$ for the maximal solution of \eqref{eq:uvw-step2_version2}, only $v(\xi_0^-)=K$ is feasible, concluding the construction with the desired property. Observe that $\varepsilon_1$ in this proof does not depend on the value of $K$.

\noindent
{\textbf  Step 3: The summit.} In this step, we will show that, given $K_1, K_2 >0$, the construction can be continued to the right of $\xi_0$, while preserving \eqref{eq:uvw-first-order} and the transmission inequality $[v']_q \leq 0$. The function $v$ becomes constant at $\xi_0$, so that $v'(\xi) = 0$ for $\xi \geq  \xi_0$. We then show that there exists $\xi_1\geq \xi_0$ such that $v(\xi) \geq K_1$ and $w(\xi) \geq K_2$ for $\xi \geq \xi_1$.

Choose $K > \max\{K_1,\gamma K_2\}$ sufficiently large that $-f(K)\geq K/\gamma$, and perform the construction in Step~2 with this $K$. Together with the tail constructed in Step~1, this gives a smooth solution \((v,u,w)\) of \eqref{eq:uvw-first-order}$^=$ for $\xi\in(-\infty,\xi_0)$ that decays exponentially as $\xi\to-\infty$. Moreover, $v$, $u$, and $w$ are strictly positive there, $v$ and $w$ are strictly increasing, and $v(\xi_0^-)=K$, $v'(\xi_0^-)>0$, and $K=v(\xi_0^-)\geq \gamma w(\xi_0^-)$.
To complete the construction, we set $v(\xi):=K$ and $u(\xi):=0$ for $\xi\geq \xi_0$, and define $w$ on $[\xi_0, \infty)$ as the solution of
\begin{equation*}
\begin{cases}
    w'=(\varepsilon/c)(K-\gamma w), \quad \xi>\xi_0,\\
    w(\xi_0)=w(\xi_0^-).
\end{cases}
\end{equation*}
Then $w$ is continuously differentiable through $\xi_0$, nondecreasing, and satisfies $w(\xi)\leq K/\gamma$ and $\lim_{\xi\to\infty} w(\xi)=K/\gamma$. The inequality $-f(K)\geq K/\gamma \geq w(\xi)$ implies that $-f(K) -w(\xi)\geq 0$ for $\xi\geq \xi_0$. Thus, this choice of $(v,u,w)$ satisfies \eqref{eq:uvw-first-order} in $(\xi_0,\infty)$. At $\xi_0$, we have $[\partial_\xi v]_{\xi_0} =u(\xi_0^+)-u(\xi_0^-)=-u(\xi_0^-) < 0$, so the transmission inequality is satisfied. Since $K>\gamma K_2$ and $w(\xi)$ converges to $K/\gamma$, there exists $\xi_1\geq \xi_0$ such that $w(\xi)> K_2$ for $\xi\geq \xi_1$.

The preceding construction gives a globally bounded triple $(v,u,w)$ satisfying \eqref{eq:uvw-first-order}, with $v$ and $w$ positive and nondecreasing. All three components decay exponentially as $\xi\to-\infty$, and there exists $\xi_1\in\mathbb R$ such that 
\[ v(\xi)\geq K_1,\qquad w(\xi)\geq K_2 \qquad \text{for } \xi\geq\xi_1. \] 
Moreover, the only interface introduced by the construction is the plateau interface at $\xi=\xi_0$, where 
\[ [v']_{\xi_0}=-v'(\xi_0^-)\leq0. \] 
Thus, the ansatz and the transformation introduced at the beginning of the section define a barrier for \eqref{FHN_pde_general}. The interface \(\xi=\xi_0\) corresponds in the original variables to 
\[x_0(t):=\sqrt{b/a}(\xi_0-c(\varepsilon)t),\qquad [\partial_x\bar v]_{x_0(t)} =\sqrt{a/b}[v']_{\xi_0}\leq0.\]
Hence, the transmission inequality is preserved, and the barrier moves to the left with speed
\(\sqrt{b/a}c(\varepsilon)\).
The symmetric lower components have the same boundedness, monotonicity, and decay properties. Since $\varepsilon/c(\varepsilon)\to0$ as $\varepsilon\to0$, all the smallness conditions imposed in the construction hold whenever $\tau>\tau_0$, after increasing $\tau_0$ if necessary. This proves Theorem~\ref{thm_barriers_conduction_block}.
\end{proof}

\section{Numerical experiments}\label{section_numerical_experiments}
In this section, we present numerical simulations that illustrate the analytical barrier constructions and explore the dynamics beyond the parameter regimes covered by the theory. The heterogeneous experiments examine localized suppression of the activator dynamics, its interaction with spatial diffusion of the recovery variable, and abrupt changes in the geometry of the medium. Despite these differences, the heterogeneous simulations display a similar transition from pulse transmission to attenuation and finite-time blocking. We also examine almost stationary barriers in a homogeneous medium, which provide threshold-region containment bounds whose speed tends to zero as the recovery timescale increases. Although the analytical results are formulated on the unbounded domain, the corresponding finite-domain simulations remain consistent with the predicted comparison bounds over the simulated time intervals. The observation of these effects with a straightforward numerical discretization suggests that the barrier mechanism is robust and allows us to compare the sufficient analytical conditions with the apparent dynamical transition between propagation and blocking.

All simulations were performed using Wolfram Mathematica, Version 15. We document the spatial discretization, time-integration, treatment of coefficient discontinuities, and the full code used to generate each figure in a GitHub repository\footnote{\url{https://github.com/estebanpaduro/fhn_barrier_heterogeneous_media}}.

Unless stated otherwise, we use the compactly supported initial data
\begin{equation}\label{initial_data}
v(0,x)
=
\frac{A}{2}
\Bigl(1+\cos\Bigl(\pi\frac{x-x_0}{h}\Bigr)\Bigr)
\mathbf 1_{[x_0-h,x_0+h]}(x),
\qquad
w(0,x)=0.
\end{equation}
We classify a simulation as propagating if the solution observed at \(x=x_{\min}\) develops a peak of height at least \(0.5\) before the final simulation time \(T\), and as exhibiting finite-time blocking otherwise. This computational criterion differs from the all-time directional blocking introduced in
Definition~\ref{defi_directional_blocking}.

\begin{figure}
    \centering

    \subfloat[\label{fig2a}]{%
        \begin{minipage}[c][3.4cm][c]{0.28\textwidth}
            \centering
            \begin{tikzpicture}
                \node[anchor=south west, inner sep=0cm] (image1)
                at (0,0.5cm) {%
                    \includegraphics[width=0.9\linewidth]
                    {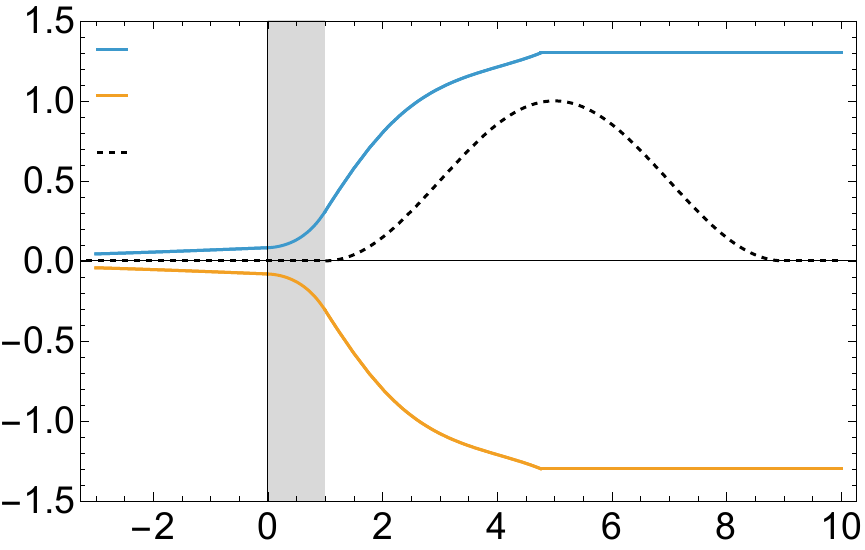}%
                };
                \begin{scope}[
                    overlay,
                    shift={(image1.south west)},
                    x={(image1.south east)},
                    y={(image1.north west)}
                ]
                    \node at (0.18,0.91)
                        {\tiny $\bar{v}$};
                    \node at (0.18,0.81)
                        {\tiny $\underaccent{\bar}{v}$};
                    \node at (0.22,0.72)
                        {\tiny $v(0)$};
                    \node at (0.5,-0.08)
                        {$x$ axis};
                    \node[rotate=90] at (-0.08,0.5)
                        {$v$ axis};
                \end{scope}
            \end{tikzpicture}
        \end{minipage}%
    }%
    \hfill
    \subfloat[\label{fig2b}]{%
        \begin{minipage}[c][3.4cm][c]{0.22\textwidth}
            \centering
            \begin{tikzpicture}
                \node[anchor=south west, inner sep=0cm] (image2)
                at (0,0) {%
                    \includegraphics[width=\linewidth]
                    {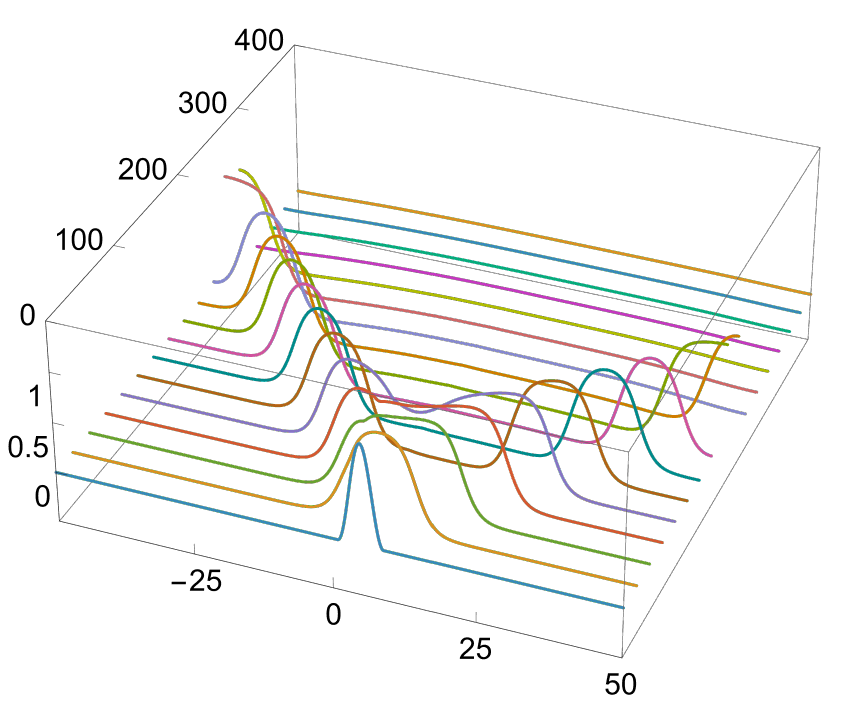}%
                };
                \begin{scope}[
                    overlay,
                    shift={(image2.south west)},
                    x={(image2.south east)},
                    y={(image2.north west)}
                ]
                    \node[rotate=-12] at (0.35,0.07)
                        {\tiny $x$ axis};
                    \node[rotate=90] at (-0.03,0.4)
                        {\tiny $v$ axis};
                    \node[rotate=50] at (0.1,0.79)
                        {\tiny time};
                \end{scope}
            \end{tikzpicture}
        \end{minipage}%
    }%
    \hfill
    \subfloat[\label{fig2c}]{%
        \begin{minipage}[c][3.4cm][c]{0.22\textwidth}
            \centering
            \begin{tikzpicture}
                \node[anchor=south west, inner sep=0cm] (image3)
                at (0,0) {%
                    \includegraphics[width=\linewidth]
                    {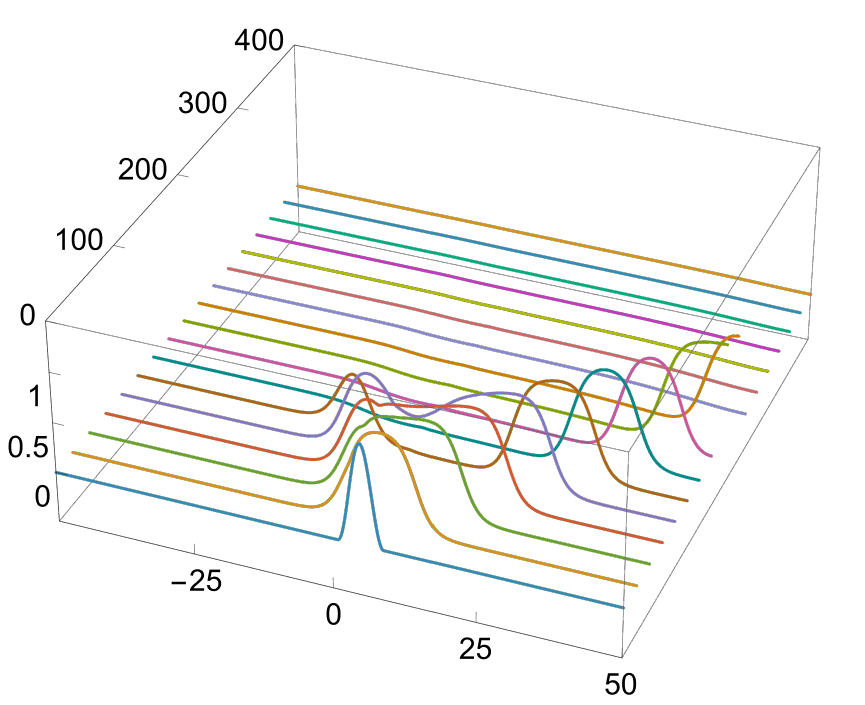}%
                };
                \begin{scope}[
                    overlay,
                    shift={(image3.south west)},
                    x={(image3.south east)},
                    y={(image3.north west)}
                ]
                    \node[rotate=-12] at (0.35,0.07)
                        {\tiny $x$ axis};
                    \node[rotate=90] at (-0.03,0.4)
                        {\tiny $v$ axis};
                    \node[rotate=50] at (0.1,0.79)
                        {\tiny time};
                \end{scope}
            \end{tikzpicture}
        \end{minipage}%
    }%
    \hfill
    \subfloat[\label{fig2d}]{%
        \begin{minipage}[c][3.4cm][c]{0.22\textwidth}
            \centering
            \begin{tikzpicture}
                \node[anchor=south west, inner sep=0cm] (image4)
                at (0,0) {%
                    \includegraphics[width=\linewidth]
                    {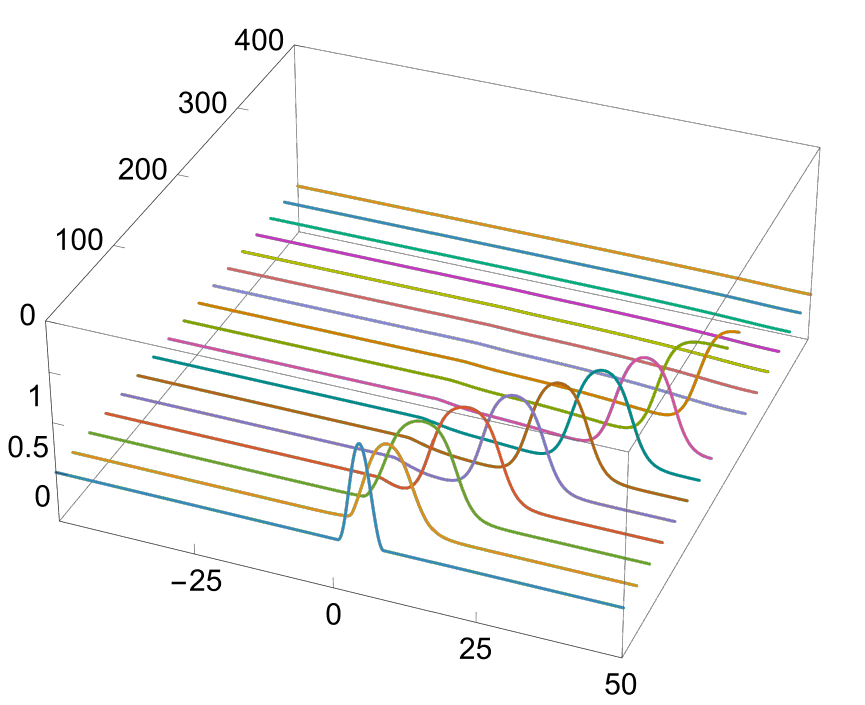}%
                };
                \begin{scope}[
                    overlay,
                    shift={(image4.south west)},
                    x={(image4.south east)},
                    y={(image4.north west)}
                ]
                    \node[rotate=-12] at (0.35,0.07)
                        {\tiny $x$ axis};
                    \node[rotate=90] at (-0.03,0.4)
                        {\tiny $v$ axis};
                    \node[rotate=50] at (0.1,0.79)
                        {\tiny time};
                \end{scope}
            \end{tikzpicture}
        \end{minipage}%
    }

    \caption{Localized depressed dynamics generated by the additive heterogeneity in 
    Theorem~\ref{thm_barrier_heterogeneous},
    case~\ref{thm_heterogeneity_type1}.
    (a) Barrier constructed with $p=3.7$ and
    $v_{\text{-}\infty}=0.01$; the gray region marks the interval of depressed excitability, and the dashed curve is the initial activator profile.
    (b)--(d) Activator evolution for
    $p=0.11$, $p=0.12$, and $p=3.7$, respectively.}
    \label{Fig2}
\end{figure}

\noindent
\textbf{Experiment 1: Localized depressed dynamics.} We first consider the additive heterogeneity in Theorem~\ref{thm_barrier_heterogeneous}, case~\ref{thm_heterogeneity_type1}. With the choice \(g(v)=-v\), the heterogeneous reaction term \(p g(v)=-pv\) locally suppresses the activator dynamics. This configuration models a finite region of depressed excitability, whose intensity and spatial extent are determined by \(p\) and \(L\), respectively.

The simulations are performed on \( [x_{\min},x_{\max}]=[-40,50] \) up to time \(T=400\), with homogeneous Neumann boundary conditions on \(v\). We take 
\[ a=b=1,\qquad D=0,\qquad f_0(v)=v(1-v)(v-\alpha),\qquad \alpha=0.3,\qquad \gamma=5, \] 
together with \(g(v)=-v\), \(L=1\), and \(\tau=500\). In particular, \(\alpha\gamma>1\), so condition~\eqref{condition_H1} is satisfied. 

The bounded-domain barrier is constructed with 
\[ K=1.3,\qquad v_{\text{-}\infty}=0.01,\qquad p=3.7. \] 
Since \(v_{\text{-}\infty}>0\), the profile is constant near the left endpoint and is compatible with the homogeneous Neumann condition. The initial data are given by \eqref{initial_data} with \(A=1\), \(x_0=5\), and \(h=4\),
and are trapped between the barrier components, as shown in Figure~\ref{Fig2}(a). 

Figure~\ref{Fig2}(b)--(d) compares the solution dynamics for \(p=0.11\), \(p=0.12\), and \(p=3.7\). For \(p=0.11\), two pulses propagate in opposite directions, and the left-moving pulse crosses the region of depressed excitability. For \(p=0.12\), the attenuation accumulated while crossing this region prevents the left-moving pulse from emerging as a propagating pulse. For \(p=3.7\), the suppression is considerably stronger, and the numerical solution remains between the displayed activator barriers, consistent with Theorem~\ref{thm_barrier_heterogeneous}.

To examine the combined influence of the intensity and spatial extent of the heterogeneity, we next vary 
\[ L\in[0,1],\qquad \Delta L=0.05, \] 
and 
\[ p\in[0,1],\qquad \Delta p=0.05.\] 
Figure~\ref{fig4a} indicates that shorter heterogeneous regions require larger values of \(p\) to block the left-moving pulse. The dark and light regions indicate propagation and finite-time blocking, respectively. Thus, within the tested parameter range, blocking depends on the combined intensity and spatial extent of the region of depressed excitability: increasing either \(p\) or \(L\) strengthens the cumulative suppression experienced by the pulse.

\begin{figure}
    \centering

    \subfloat[\label{fig3a}]{%
        \begin{minipage}[c][3.4cm][c]{0.28\textwidth}
            \centering
            \begin{tikzpicture}
                \node[anchor=south west, inner sep=0cm] (image1)
                at (0,0.5cm) {%
                    \includegraphics[width=0.9\linewidth]
                    {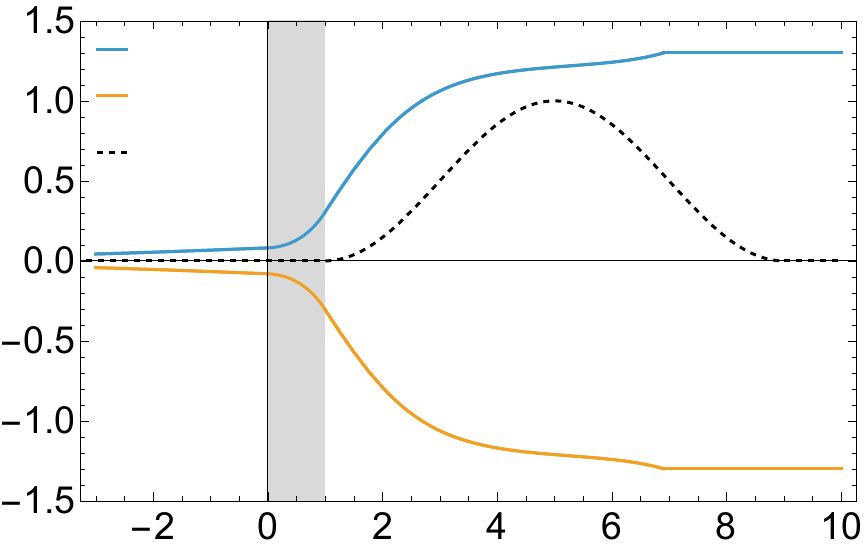}%
                };
                \begin{scope}[
                    overlay,
                    shift={(image1.south west)},
                    x={(image1.south east)},
                    y={(image1.north west)}
                ]
                    \node at (0.18,0.91)
                        {\tiny $\bar{v}$};
                    \node at (0.18,0.81)
                        {\tiny $\underaccent{\bar}{v}$};
                    \node at (0.22,0.72)
                        {\tiny $v(0)$};
                    \node at (0.5,-0.08)
                        {$x$ axis};
                    \node[rotate=90] at (-0.08,0.5)
                        {$v$ axis};
                \end{scope}
            \end{tikzpicture}
        \end{minipage}%
    }%
    \hfill
    \subfloat[\label{fig3b}]{%
        \begin{minipage}[c][3.4cm][c]{0.22\textwidth}
            \centering
            \begin{tikzpicture}
                \node[anchor=south west, inner sep=0cm] (image2)
                at (0,0) {%
                    \includegraphics[width=\linewidth]
                    {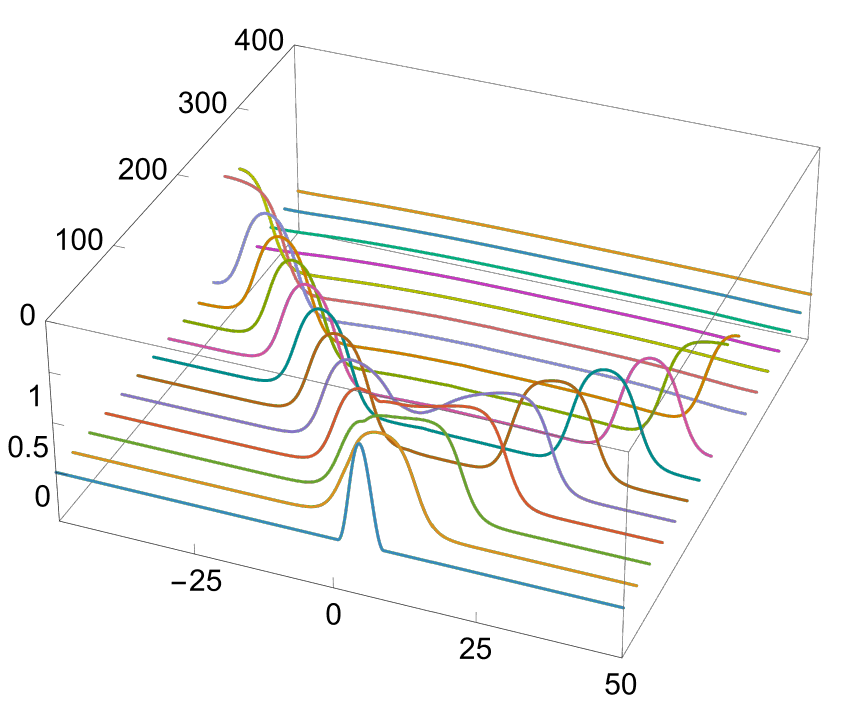}%
                };
                \begin{scope}[
                    overlay,
                    shift={(image2.south west)},
                    x={(image2.south east)},
                    y={(image2.north west)}
                ]
                    \node[rotate=-12] at (0.35,0.07)
                        {\tiny $x$ axis};
                    \node[rotate=90] at (-0.03,0.4)
                        {\tiny $v$ axis};
                    \node[rotate=50] at (0.1,0.79)
                        {\tiny time};
                \end{scope}
            \end{tikzpicture}
        \end{minipage}%
    }%
    \hfill
    \subfloat[\label{fig3c}]{%
        \begin{minipage}[c][3.4cm][c]{0.22\textwidth}
            \centering
            \begin{tikzpicture}
                \node[anchor=south west, inner sep=0cm] (image3)
                at (0,0) {%
                    \includegraphics[width=\linewidth]
                    {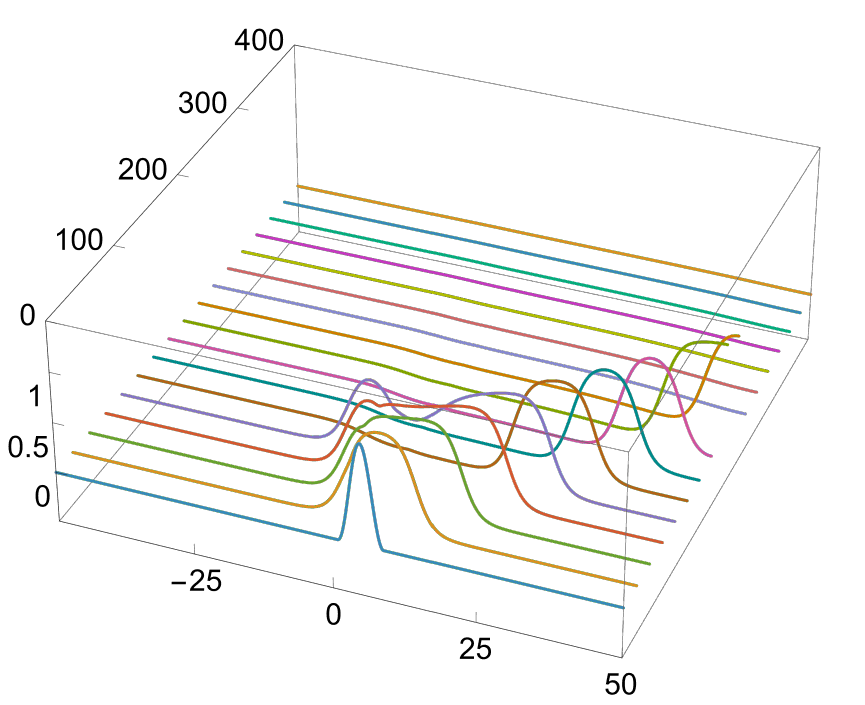}%
                };
                \begin{scope}[
                    overlay,
                    shift={(image3.south west)},
                    x={(image3.south east)},
                    y={(image3.north west)}
                ]
                    \node[rotate=-12] at (0.35,0.07)
                        {\tiny $x$ axis};
                    \node[rotate=90] at (-0.03,0.4)
                        {\tiny $v$ axis};
                    \node[rotate=50] at (0.1,0.79)
                        {\tiny time};
                \end{scope}
            \end{tikzpicture}
        \end{minipage}%
    }%
    \hfill
    \subfloat[\label{fig3d}]{%
        \begin{minipage}[c][3.4cm][c]{0.22\textwidth}
            \centering
            \begin{tikzpicture}
                \node[anchor=south west, inner sep=0cm] (image4)
                at (0,0) {%
                    \includegraphics[width=\linewidth]
                    {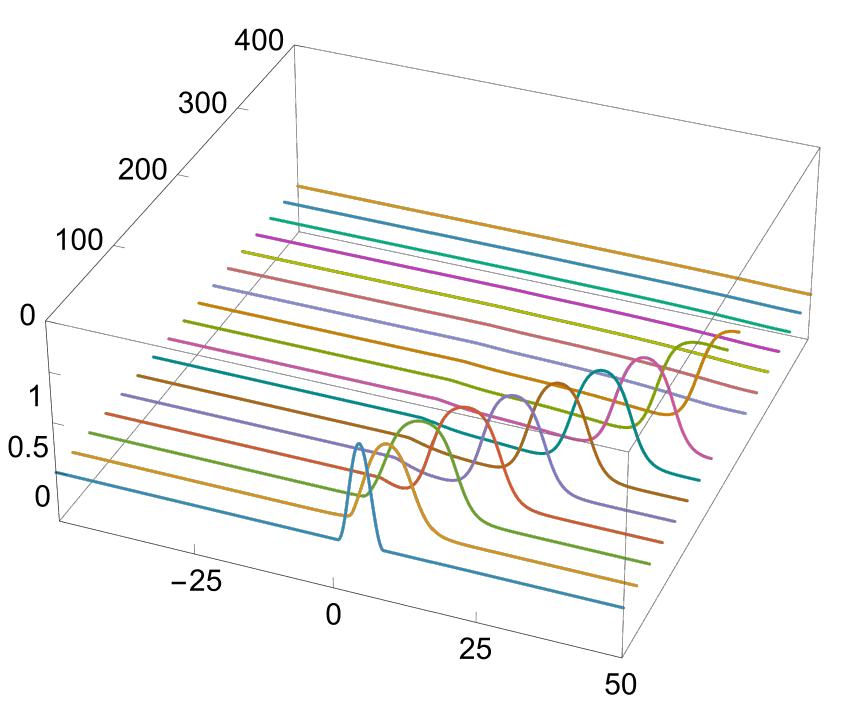}%
                };
                \begin{scope}[
                    overlay,
                    shift={(image4.south west)},
                    x={(image4.south east)},
                    y={(image4.north west)}
                ]
                    \node[rotate=-12] at (0.35,0.07)
                        {\tiny $x$ axis};
                    \node[rotate=90] at (-0.03,0.4)
                        {\tiny $v$ axis};
                    \node[rotate=50] at (0.1,0.79)
                        {\tiny time};
                \end{scope}
            \end{tikzpicture}
        \end{minipage}%
    }
    
    \caption{Localized depressed dynamics with recovery diffusion \(D=1\). (a) Barrier constructed for \(D=0\) using \(\tilde\gamma=4.97\), \(p=3.7\), and \(v_{\text{-}\infty}=0.01\); the gray region marks the heterogeneous interval, and the dashed curve is the initial activator profile. (b)--(d) Activator evolution for \(p=0.11\), \(p=0.12\), and \(p=3.7\), respectively.
    }
    \label{Fig3}
\end{figure}

\noindent
\textbf{Experiment 2: Recovery diffusion.} 
We repeat the experiment on localized depressed dynamics with recovery diffusion \(D=1\) and homogeneous Neumann boundary conditions on both components. The activator encounters the same finite region of depressed excitability as in Experiment~1, but the recovery variable can now redistribute spatially. This configuration allows us to examine how recovery diffusion modifies the interaction between the propagating pulse and the region of depressed excitability. The domain, final time, nonlinearity, heterogeneity, and initial data are otherwise the same as in Experiment~1.

Motivated by Theorem~\ref{thm_double_diffusive}, we first construct the barrier for \(D=0\) using the modified recovery parameter
\[ 
\tilde\gamma=4.97<\gamma=5, \qquad K=1.3, \qquad v_{\text{-}\infty}=0.01, \qquad p=3.7. 
\] 
Figure~\ref {fig3a} shows the resulting barrier along with the initial data. 

Figure~\ref{fig3b}--\ref{fig3d} displays the dynamics for \(p=0.11\), \(p=0.12\), and \(p=3.7\). Despite the additional spatial coupling through the recovery component, the simulations exhibit the same qualitative transition as in Experiment~1.
The left-moving pulse crosses the heterogeneous region for \(p=0.11\), whereas finite-time blocking occurs for \(p=0.12\). For \(p=3.7\), the suppression is considerably stronger, and the solution remains between the displayed activator barriers over the simulated time interval. Since \(D=1\) is not known to lie in the small-\(D\) range provided by Theorem~\ref{thm_double_diffusive}, this last observation should be interpreted as numerical evidence rather than as a direct consequence of the theorem. 

We also investigate how recovery diffusion affects the intensity of the localized reaction defect required for finite-time blocking. We fix \(L=0.5\) and vary
\[ p\in[0,0.3],\qquad \Delta p=0.02, \] 
and 
\[ D\in[1,131],\qquad \Delta D=10. \] 
Figure~\ref{fig4b} shows that, within the tested parameter range, the value of \(p\) required to produce finite-time blocking decreases as \(D\) increases. Thus, in these simulations, stronger recovery diffusion enhances the effectiveness of the localized reaction defect in obstructing pulse propagation. This trend is numerical and lies beyond the small-recovery-diffusion regime established by Theorem~\ref{thm_double_diffusive}.

\begin{figure}
    \centering

    \subfloat[\label{fig4a}]{%
        \begin{minipage}[c][7.5cm][c]{0.35\textwidth}
            \centering
            \begin{tikzpicture}
                \node[anchor=south west, inner sep=0cm] (image1)
                at (0,0) {%
                    \includegraphics[width=\linewidth]
                    {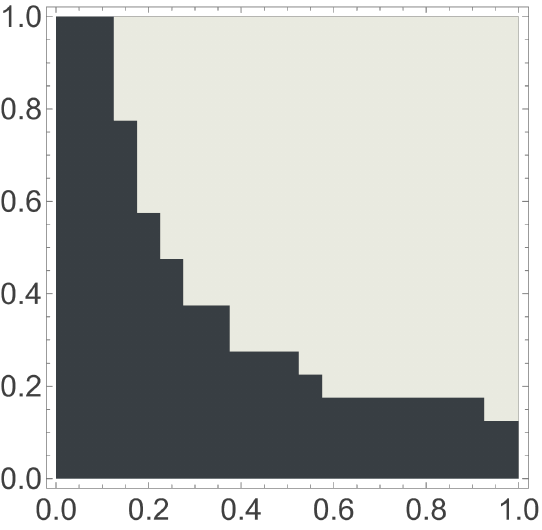}%
                };
                \begin{scope}[
                    overlay,
                    shift={(image1.south west)},
                    x={(image1.south east)},
                    y={(image1.north west)}
                ]
                    \node at (0.5,-0.08)
                        {$p$ axis};
                    \node[rotate=90] at (-0.08,0.5)
                        {$L$ axis};
                \end{scope}
            \end{tikzpicture}
        \end{minipage}
    }
    \hspace{1cm}
    \subfloat[\label{fig4b}]{%
        \begin{minipage}[c][7.5cm][c]{0.35\textwidth}
            \centering
            \begin{tikzpicture}
                \node[anchor=south west, inner sep=0cm] (image1)
                at (0,0) {%
                    \includegraphics[width=\linewidth]
                    {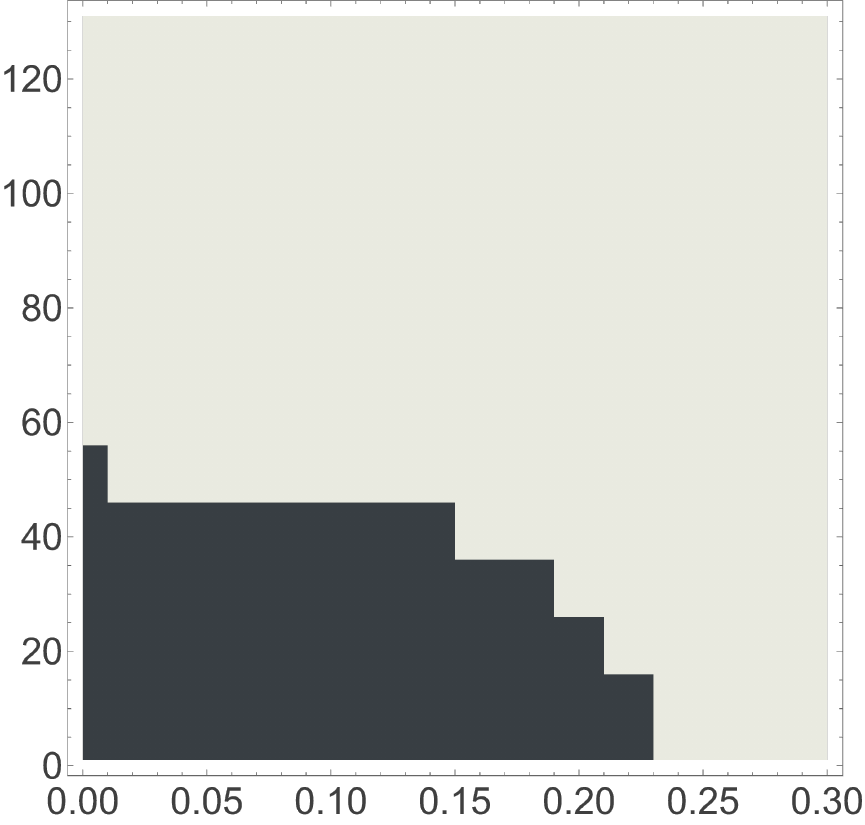}%
                };
                \begin{scope}[
                    overlay,
                    shift={(image1.south west)},
                    x={(image1.south east)},
                    y={(image1.north west)}
                ]
                    \node at (0.5,-0.08)
                        {$p$ axis};
                    \node[rotate=90] at (-0.08,0.5)
                        {$D$ axis};
                \end{scope}
            \end{tikzpicture}
        \end{minipage}
    }
    \caption{Numerical classification of propagation for the additive heterogeneity. Dark regions indicate propagation, while light regions indicate finite-time blocking according to the finite-time threshold criterion. (a) Dependence on the intensity \(p\) and spatial extent \(L\) of the region of depressed excitability for \(D=0\). (b) Dependence on the recovery diffusion \(D\) and heterogeneity intensity \(p\) for \(L=0.5\).
    }
    \label{fig4dependenceParametersBarrier}
\end{figure}

\noindent 
\textbf{Experiment 3: Abrupt geometric changes.} We next examine the conductivity heterogeneities in Theorem~\ref{thm_barrier_heterogeneous}, cases~\ref{thm_heterogeneity_type3} and \ref{thm_heterogeneity_type4}. 
In these configurations, the coefficients \(a=r\) and \(b=r^2\) represent abrupt changes in the geometry of the axon. Unlike the localized reaction heterogeneities in Experiments~1 and 2, these heterogeneities leave the reaction terms unchanged and instead modify the activator flux across the interfaces.
The domain, final time, nonlinearity, parameters $D$ and $\gamma$, and initial data are the same as in Experiment~1. 

For case~\ref{thm_heterogeneity_type3}, let \( a(x)=r(x), b(x)=r(x)^2, \) where 
\[ 
r(x)= \begin{cases} 
1, & x\leq0,\\[2mm] 
(1+p)^{-1}, & x>0. 
\end{cases} 
\] 
For the left-moving pulse considered here, this configuration represents a single abrupt transition from the reduced-\(r\) region \(x>0\) into the reference region \(x\leq0\). The pulse therefore encounters one conductivity interface rather than a finite heterogeneous segment. The barrier in Figure~\ref{fig5a} is constructed with \(K=1.3\), \(v_{\text{-}\infty}=0.005\), and \(p=10.5\).

Figure~\ref{fig5b}--\ref{fig5d} compares the dynamics for \(p=1.37\), \(p=1.38\), and \(p=10.5\). For \(p=1.37\), the left-moving pulse crosses the conductivity interface and continues to propagate into the region \(x<0\), whereas finite-time blocking occurs for \(p=1.38\). For \(p=10.5\), the pulse is more strongly attenuated at the interface, and the numerical solution remains between the displayed activator barriers, consistent with Theorem~\ref{thm_barrier_heterogeneous}. The finite-time blocking observed for \(p=1.38\), well below the value used in the explicit barrier construction, illustrates that the analytical condition is sufficient but not expected to be sharp. 

The observed blocking is directional because the coefficient configuration is not invariant under spatial reflection. Consequently, the barrier predicting failure of propagation toward the left does not imply that propagation toward the right is also impeded.

\begin{figure}
    \centering

    \subfloat[\label{fig5a}]{%
        \begin{minipage}[c][3.4cm][c]{0.28\textwidth}
            \centering
            \begin{tikzpicture}
                \node[anchor=south west, inner sep=0cm] (image1)
                at (0,0.5cm) {%
                    \includegraphics[width=0.9\linewidth]
                    {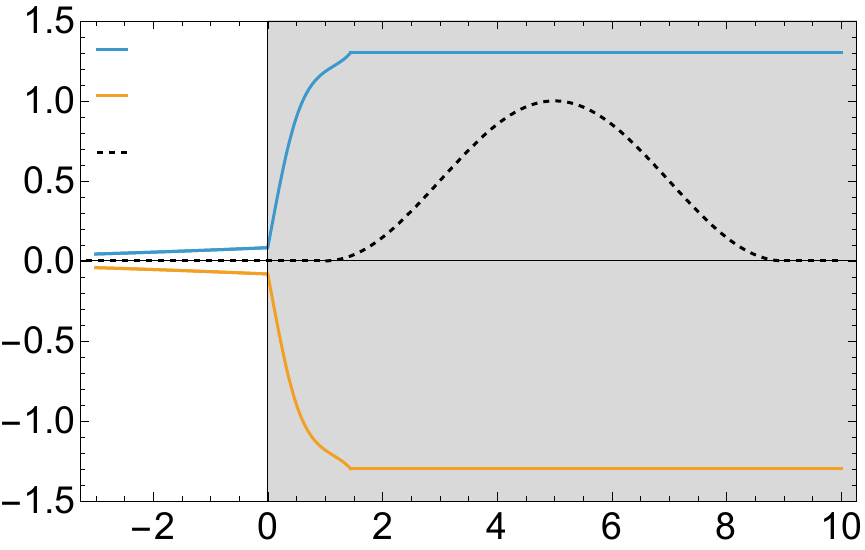}%
                };
                \begin{scope}[
                    overlay,
                    shift={(image1.south west)},
                    x={(image1.south east)},
                    y={(image1.north west)}
                ]
                    \node at (0.18,0.91)
                        {\tiny $\bar{v}$};
                    \node at (0.18,0.81)
                        {\tiny $\underaccent{\bar}{v}$};
                    \node at (0.22,0.72)
                        {\tiny $v(0)$};
                    \node at (0.5,-0.08)
                        {$x$ axis};
                    \node[rotate=90] at (-0.08,0.5)
                        {$v$ axis};
                \end{scope}
            \end{tikzpicture}
        \end{minipage}%
    }%
    \hfill
    \subfloat[\label{fig5b}]{%
        \begin{minipage}[c][3.4cm][c]{0.22\textwidth}
            \centering
            \begin{tikzpicture}
                \node[anchor=south west, inner sep=0cm] (image2)
                at (0,0) {%
                    \includegraphics[width=\linewidth]
                    {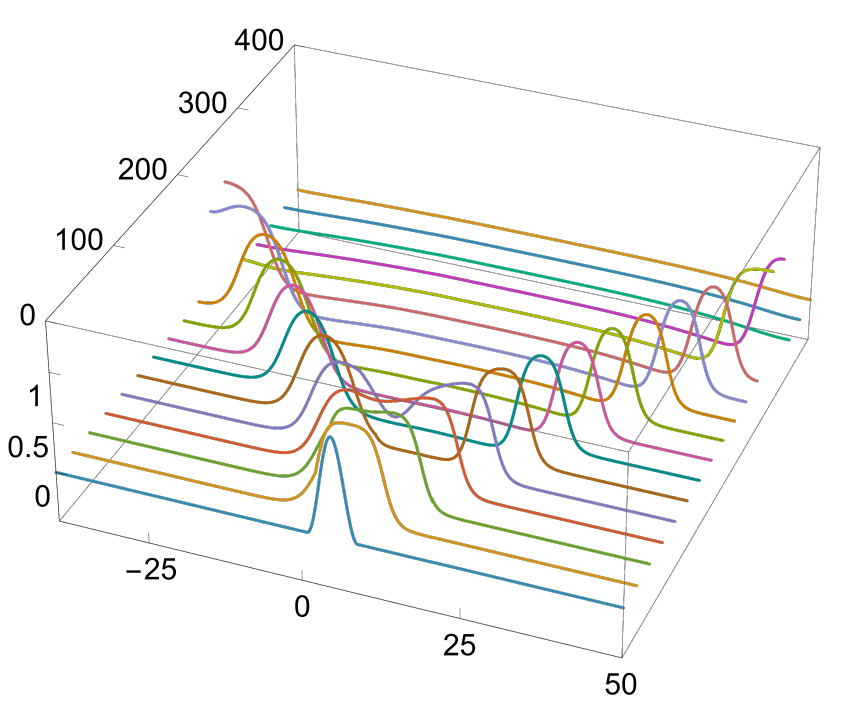}%
                };
                \begin{scope}[
                    overlay,
                    shift={(image2.south west)},
                    x={(image2.south east)},
                    y={(image2.north west)}
                ]
                    \node[rotate=-12] at (0.35,0.07)
                        {\tiny $x$ axis};
                    \node[rotate=90] at (-0.03,0.4)
                        {\tiny $v$ axis};
                    \node[rotate=50] at (0.1,0.79)
                        {\tiny time};
                \end{scope}
            \end{tikzpicture}
        \end{minipage}%
    }%
    \hfill
    \subfloat[\label{fig5c}]{%
        \begin{minipage}[c][3.4cm][c]{0.22\textwidth}
            \centering
            \begin{tikzpicture}
                \node[anchor=south west, inner sep=0cm] (image3)
                at (0,0) {%
                    \includegraphics[width=\linewidth]
                    {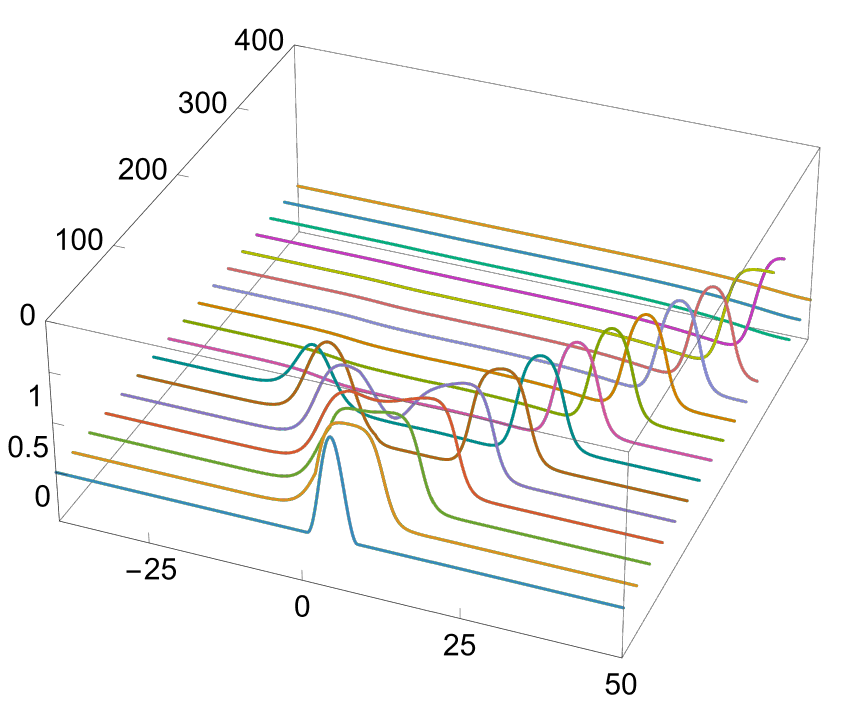}%
                };
                \begin{scope}[
                    overlay,
                    shift={(image3.south west)},
                    x={(image3.south east)},
                    y={(image3.north west)}
                ]
                    \node[rotate=-12] at (0.35,0.07)
                        {\tiny $x$ axis};
                    \node[rotate=90] at (-0.03,0.4)
                        {\tiny $v$ axis};
                    \node[rotate=50] at (0.1,0.79)
                        {\tiny time};
                \end{scope}
            \end{tikzpicture}
        \end{minipage}%
    }%
    \hfill
    \subfloat[\label{fig5d}]{%
        \begin{minipage}[c][3.4cm][c]{0.22\textwidth}
            \centering
            \begin{tikzpicture}
                \node[anchor=south west, inner sep=0cm] (image4)
                at (0,0) {%
                    \includegraphics[width=\linewidth]
                    {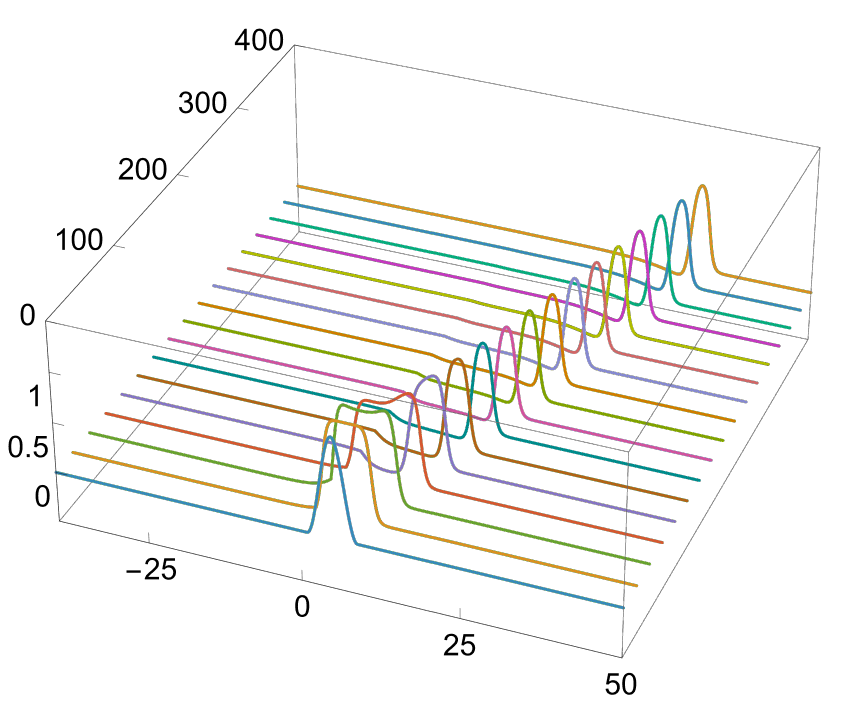}%
                };
                \begin{scope}[
                    overlay,
                    shift={(image4.south west)},
                    x={(image4.south east)},
                    y={(image4.north west)}
                ]
                    \node[rotate=-12] at (0.35,0.07)
                        {\tiny $x$ axis};
                    \node[rotate=90] at (-0.03,0.4)
                        {\tiny $v$ axis};
                    \node[rotate=50] at (0.1,0.79)
                        {\tiny time};
                \end{scope}
            \end{tikzpicture}
        \end{minipage}%
    }

    \caption{Pulse interaction with a single abrupt conductivity transition in Theorem~\ref{thm_barrier_heterogeneous}, case~\ref{thm_heterogeneity_type3}. (a) Barrier constructed with \(p=10.5\) and \(v_{\text{-}\infty}=0.005\), together with the initial activator profile; the interface is located at \(x=0\). (b)--(d) Activator evolution for \(p=1.37\), \(p=1.38\), and \(p=10.5\), respectively.
    }
    \label{fig5}
\end{figure}

For case~\ref{thm_heterogeneity_type4}, the conductivity reduction is confined to a finite interval:
\[ 
r(x)= 
\begin{cases} 
1, & x<0,\\[1mm] 
(1+p)^{-1}, & 0\leq x\leq L,\\[1mm] 
1, & x>L, 
\end{cases} \qquad L=0.8. 
\] 
The left-moving pulse therefore enters a finite region with reduced \(r\) through the interface at \(x=L\) and exits it through the interface at \(x=0\). In contrast with the preceding configuration, the conductivity reduction is localized to a finite interval bounded by two interfaces. The barrier shown in Figure~\ref{fig6a} is constructed with \(K=1.3\), \(v_{\text{-}\infty}=0.005\), and \(p=11\).

Figure~\ref{fig6b}--\ref{fig6d} displays the dynamics for \(p=1.37\), \(p=1.38\), and \(p=11\), respectively. Although the pulse now encounters a localized conductivity reduction bounded by two interfaces, the simulations exhibit the same qualitative transition as in the single-interface configuration. For \(p=1.37\), the left-moving pulse traverses the region of reduced conductivity and continues to propagate into the homogeneous region \(x<0\), whereas finite-time blocking occurs for \(p=1.38\). For \(p=11\), the pulse is more strongly attenuated and the solution remains between the displayed activator barriers. 

Because the medium is homogeneous for \(x<0\), propagation into this region in Figure~\ref{fig6b} confirms that the initial pulse can continue to propagate after crossing the region of reduced conductivity. The attenuation observed in panels (c) and (d) can therefore be attributed to its interaction with the localized conductivity heterogeneity.

\begin{figure}
    \centering

    \subfloat[\label{fig6a}]{%
        \begin{minipage}[c][3.4cm][c]{0.28\textwidth}
            \centering
            \begin{tikzpicture}
                \node[anchor=south west, inner sep=0cm] (image1)
                at (0,0.5cm) {%
                    \includegraphics[width=0.9\linewidth]
                    {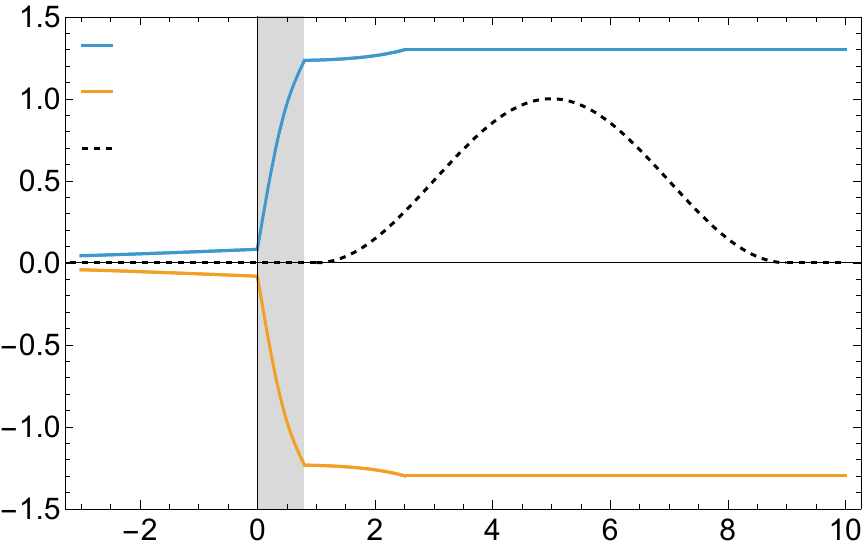}%
                };
                \begin{scope}[
                    overlay,
                    shift={(image1.south west)},
                    x={(image1.south east)},
                    y={(image1.north west)}
                ]
                    \node at (0.18,0.91)
                        {\tiny $\bar{v}$};
                    \node at (0.18,0.81)
                        {\tiny $\underaccent{\bar}{v}$};
                    \node at (0.22,0.72)
                        {\tiny $v(0)$};
                    \node at (0.5,-0.08)
                        {$x$ axis};
                    \node[rotate=90] at (-0.08,0.5)
                        {$v$ axis};
                \end{scope}
            \end{tikzpicture}
        \end{minipage}%
    }%
    \hfill
    \subfloat[\label{fig6b}]{%
        \begin{minipage}[c][3.4cm][c]{0.22\textwidth}
            \centering
            \begin{tikzpicture}
                \node[anchor=south west, inner sep=0cm] (image2)
                at (0,0) {%
                    \includegraphics[width=\linewidth]
                    {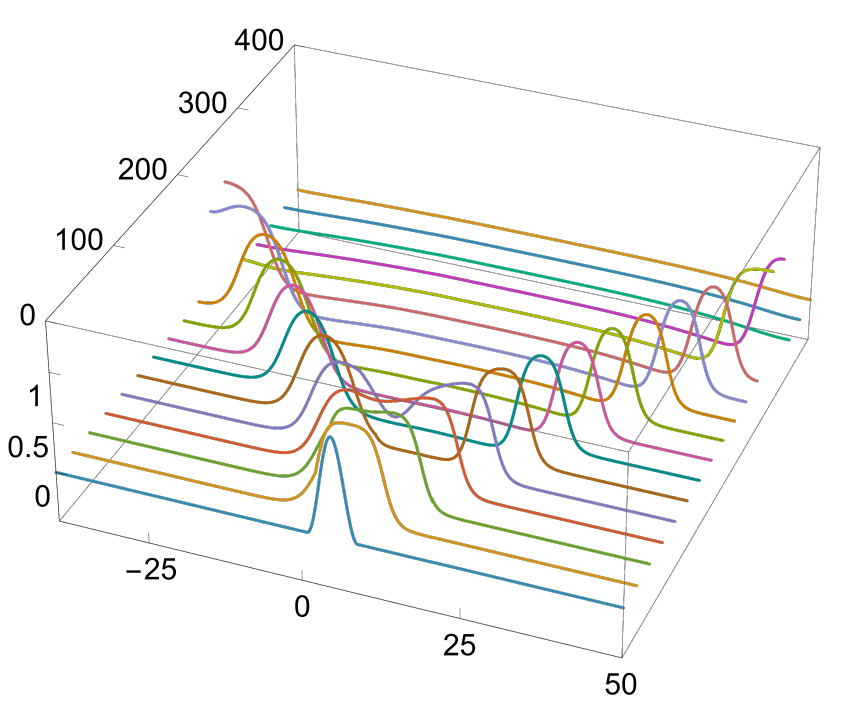}%
                };
                \begin{scope}[
                    overlay,
                    shift={(image2.south west)},
                    x={(image2.south east)},
                    y={(image2.north west)}
                ]
                    \node[rotate=-12] at (0.35,0.07)
                        {\tiny $x$ axis};
                    \node[rotate=90] at (-0.03,0.4)
                        {\tiny $v$ axis};
                    \node[rotate=50] at (0.1,0.79)
                        {\tiny time};
                \end{scope}
            \end{tikzpicture}
        \end{minipage}%
    }%
    \hfill
    \subfloat[\label{fig6c}]{%
        \begin{minipage}[c][3.4cm][c]{0.22\textwidth}
            \centering
            \begin{tikzpicture}
                \node[anchor=south west, inner sep=0cm] (image3)
                at (0,0) {%
                    \includegraphics[width=\linewidth]
                    {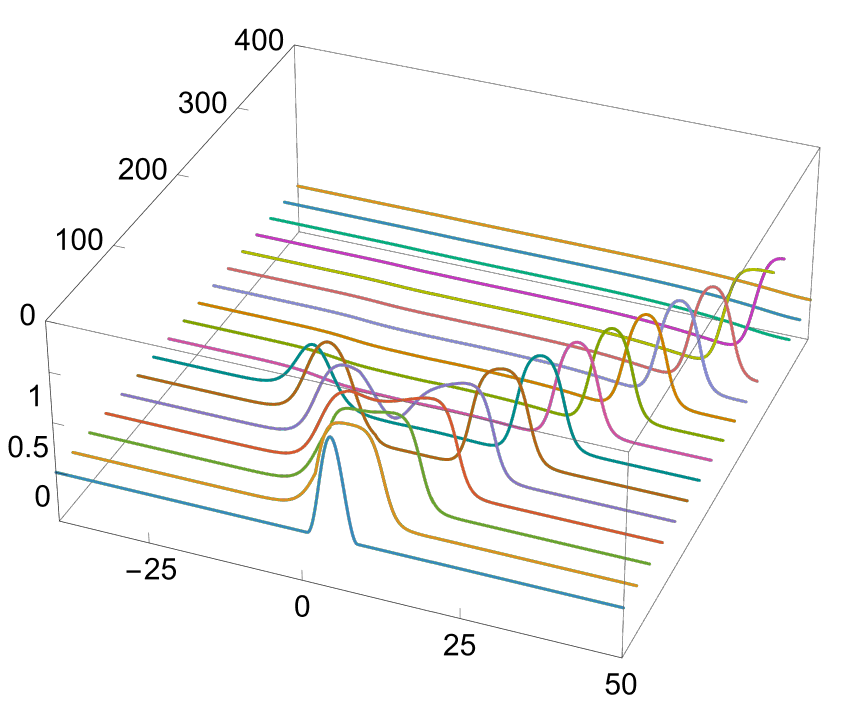}%
                };
                \begin{scope}[
                    overlay,
                    shift={(image3.south west)},
                    x={(image3.south east)},
                    y={(image3.north west)}
                ]
                    \node[rotate=-12] at (0.35,0.07)
                        {\tiny $x$ axis};
                    \node[rotate=90] at (-0.03,0.4)
                        {\tiny $v$ axis};
                    \node[rotate=50] at (0.1,0.79)
                        {\tiny time};
                \end{scope}
            \end{tikzpicture}
        \end{minipage}%
    }%
    \hfill
    \subfloat[\label{fig6d}]{%
        \begin{minipage}[c][3.4cm][c]{0.22\textwidth}
            \centering
            \begin{tikzpicture}
                \node[anchor=south west, inner sep=0cm] (image4)
                at (0,0) {%
                    \includegraphics[width=\linewidth]
                    {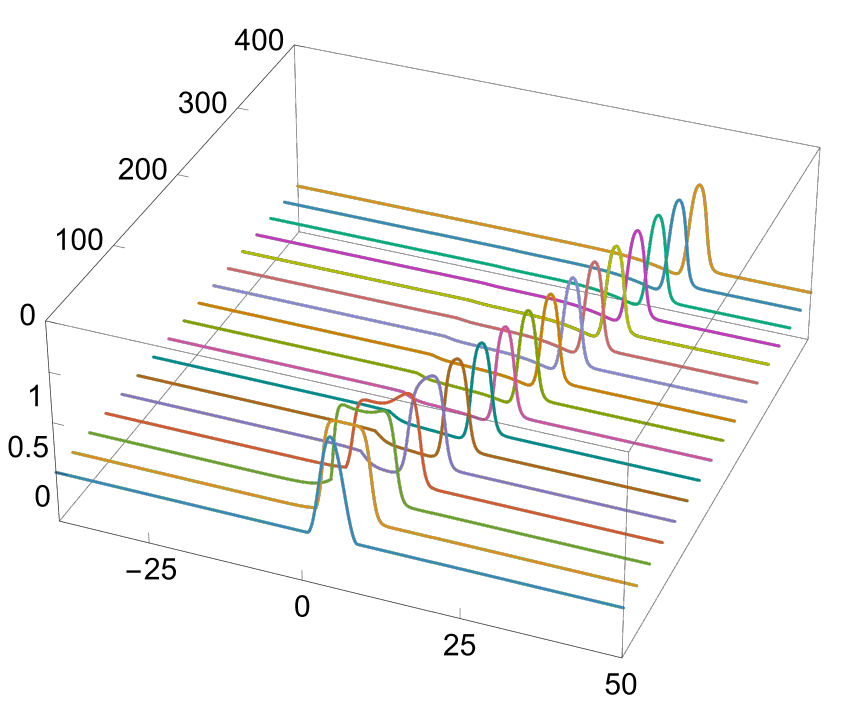}%
                };
                \begin{scope}[
                    overlay,
                    shift={(image4.south west)},
                    x={(image4.south east)},
                    y={(image4.north west)}
                ]
                    \node[rotate=-12] at (0.35,0.07)
                        {\tiny $x$ axis};
                    \node[rotate=90] at (-0.03,0.4)
                        {\tiny $v$ axis};
                    \node[rotate=50] at (0.1,0.79)
                        {\tiny time};
                \end{scope}
            \end{tikzpicture}
        \end{minipage}%
    }

    \caption{Pulse interaction with a localized conductivity reduction in Theorem~\ref{thm_barrier_heterogeneous}, case~\ref{thm_heterogeneity_type4}, with \(L=0.8\). (a) Barrier constructed with \(p=11\) and \(v_{\text{-}\infty}=0.005\), together with the initial activator profile; the constriction occupies \(0\leq x\leq L\). (b)--(d) Activator evolution for \(p=1.37\), \(p=1.38\), and \(p=11\), respectively. 
    }
    \label{Fig6}
\end{figure}

\noindent 
\textbf{Experiment 4: Almost stationary barriers.} Finally, we illustrate the almost stationary barriers of Theorem~\ref{thm_barriers_conduction_block}. 
Unlike the preceding experiments, the medium is homogeneous, and the barriers provide threshold-region containment bounds whose speed tends to zero as the recovery timescale increases.
The simulations are performed on \([-10,15]\) up to time \(T=8\), with homogeneous Neumann boundary conditions on \(v\). We take 
\[ a=b=1,\qquad D=0,\qquad f(v)=v(1-v)(v-\alpha), \] 
and consider two parameter configurations. 

\begin{itemize}
\item For Figure~\ref{fig7a}--\ref{fig7b}, we use 
\( \alpha=0.7, \gamma=4, \varepsilon=\tau^{-1}=0.001, \)
and construct the barrier with 
\( K_1=3, K_2=\frac{K_1}{\gamma}\), and \(c(\varepsilon)=\varepsilon^{1/2}\). 
The two initial activator profiles are given by \eqref{initial_data} with 
\[ (A,x_0,h)=(1.2,3,6) \quad\text{and}\quad (A,x_0,h)=(3,6.7,3.2). \] 
\item For Figure~\ref{fig7c}--\ref{fig7d}, we take 
\( \alpha=0.55, \gamma=1.2, \varepsilon=0.0005, \)
so that \(\alpha\gamma=0.66<1\), and use 
\( K_1=2.8, K_2=\frac{K_1}{\gamma}\), and \(c(\varepsilon)=\varepsilon^{0.4}\). 
The corresponding initial-data parameters are 
\[ (A,x_0,h)=(0.93,3,6.2) \quad\text{and}\quad (A,x_0,h)=(2.8,8.4,3.5). \] 
This second configuration illustrates that the almost stationary construction remains applicable when condition~\eqref{condition_H1}.(ii) is not satisfied.
\end{itemize}
In both configurations, the barrier profiles are displayed at time increments \(\Delta t=5\), while the solutions are shown at 
\[ t\in\{0,0.15,0.8,2,4,8\}. \] 
The increasing opacity indicates increasing time. The barriers move slowly to the left, while the numerical solutions initiated from two distinct profiles remain below their upper activator components over the displayed time interval. Thus, the simulations illustrate the comparison property of the almost stationary barriers for different parameter regimes and different initial profiles. By translation and spatial reflection invariance of the homogeneous problem, reflected barriers provide the analogous constraint on the expansion of threshold-exceedance regions toward the right.

\begin{figure}
    \centering

\subfloat[\label{fig7a}]{%
    \begin{minipage}{0.24\linewidth}
        \centering
        \begin{tikzpicture}
            \node[anchor=south west, inner sep=0cm] (image1) at (0,0) {%
                \includegraphics[width=\linewidth]{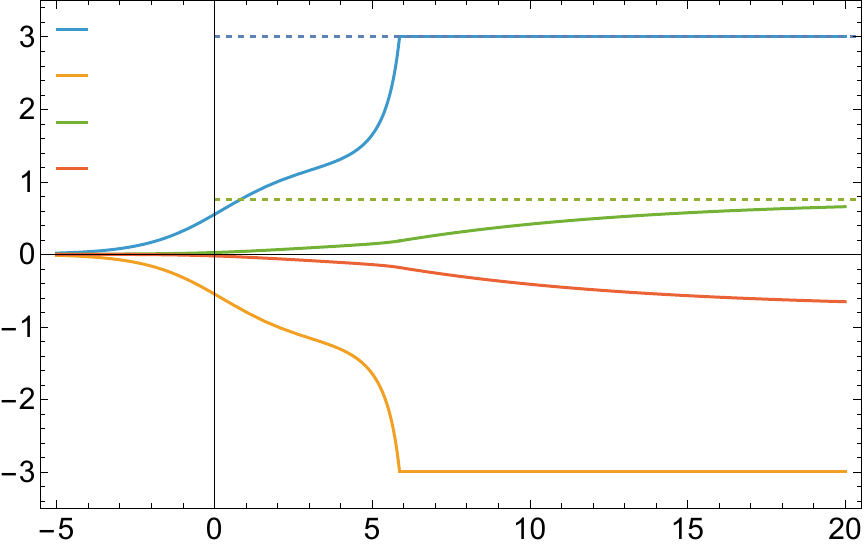}%
            };
            \begin{scope}[
                shift={(image1.south west)},
                x={(image1.south east)},
                y={(image1.north west)}
            ]
                \draw[-{Latex[length=2.5mm,width=1.5mm]}, thick]
                    (0.4,0.75) -- (0.25,0.75);
                \node at (0.13,0.94) {\tiny $\bar{v}$};
                \node at (0.13,0.85)
                    {\tiny $\underaccent{\bar}{v}$};
                \node at (0.13,0.76) {\tiny $\bar{w}$};
                \node at (0.13,0.67)
                    {\tiny $\underaccent{\bar}{w}$};
                \node at (0.21,0.92) {\tiny $K_1$};
                \node at (0.21,0.65) {\tiny $K_2$};
            \end{scope}
        \end{tikzpicture}
    \end{minipage}%
}%
\hfill
\subfloat[\label{fig7b}]{%
    \begin{minipage}{0.24\linewidth}
        \centering
        \begin{tikzpicture}
            \node[anchor=south west, inner sep=0cm] (image2) at (0,0) {%
                \includegraphics[width=\linewidth]{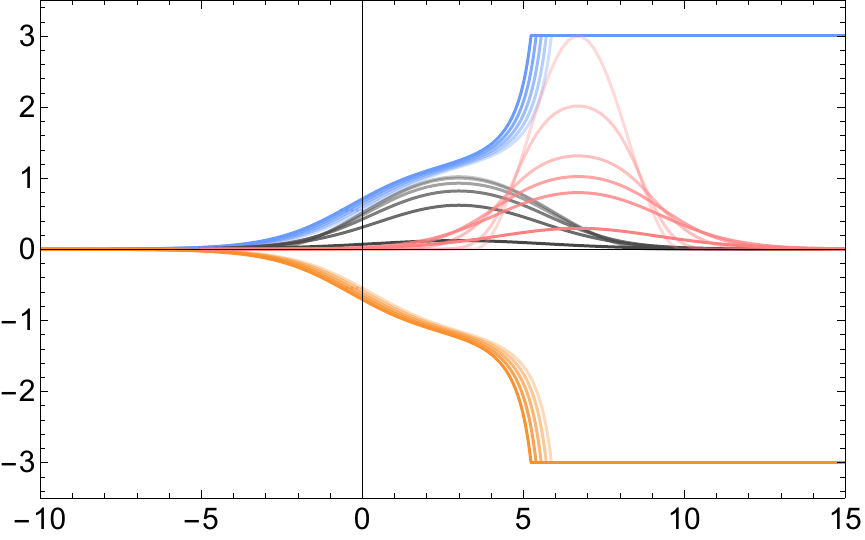}%
            };
        \end{tikzpicture}
    \end{minipage}%
}%
\hfill
\subfloat[\label{fig7c}]{%
    \begin{minipage}{0.24\linewidth}
        \centering
        \begin{tikzpicture}
            \node[anchor=south west, inner sep=0cm] (image3) at (0,0) {%
                \includegraphics[width=\linewidth]{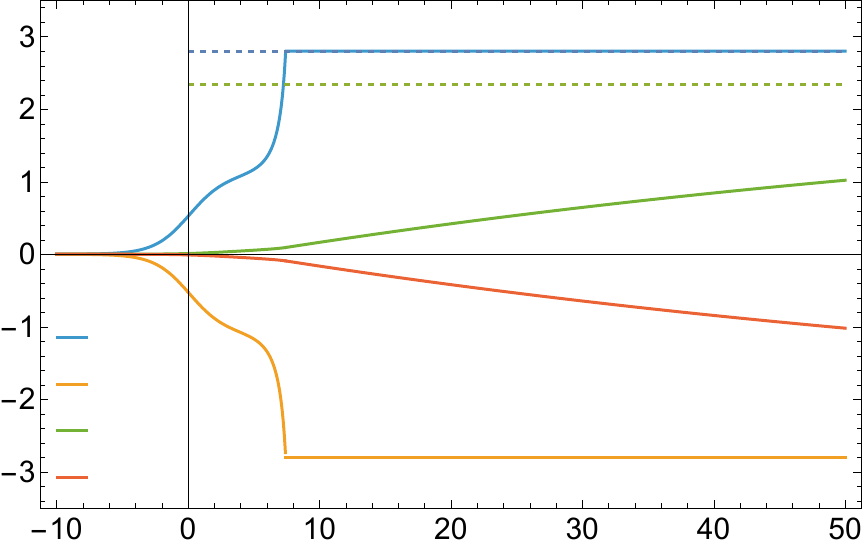}%
            };
            \begin{scope}[
                shift={(image3.south west)},
                x={(image3.south east)},
                y={(image3.north west)}
            ]
                \draw[-{Latex[length=2.5mm,width=1.5mm]}, thick]
                    (0.25,0.68) -- (0.12,0.68);
                \node at (0.13,0.39) {\tiny $\bar{v}$};
                \node at (0.13,0.30)
                    {\tiny $\underaccent{\bar}{v}$};
                \node at (0.13,0.21) {\tiny $\bar{w}$};
                \node at (0.13,0.13)
                    {\tiny $\underaccent{\bar}{w}$};
                \node at (0.17,0.92) {\tiny $K_1$};
                \node at (0.17,0.83) {\tiny $K_2$};
            \end{scope}
        \end{tikzpicture}
    \end{minipage}%
}%
\hfill
\subfloat[\label{fig7d}]{%
    \begin{minipage}{0.24\linewidth}
        \centering
        \begin{tikzpicture}
            \node[anchor=south west, inner sep=0cm] (image4) at (0,0) {%
                \includegraphics[width=\linewidth]{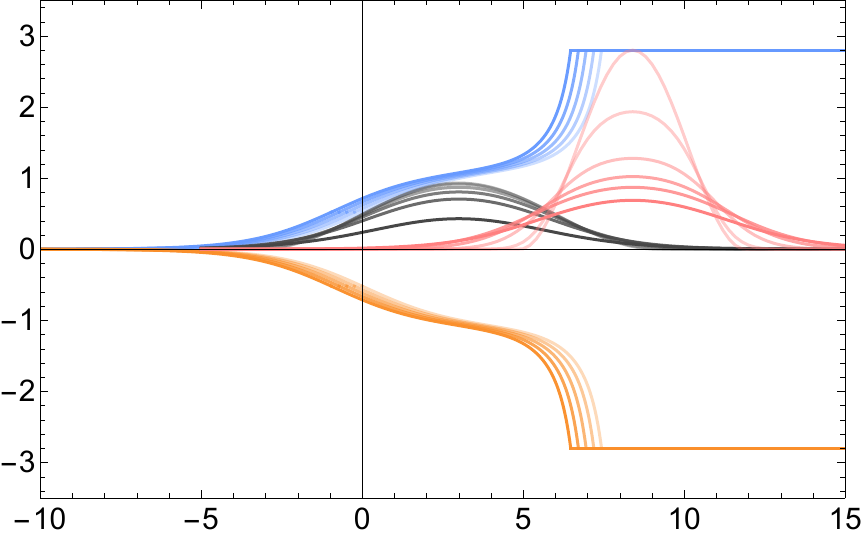}%
            };
        \end{tikzpicture}
    \end{minipage}%
}
    
    \caption{Almost stationary barriers and solution evolution for the homogeneous problem. (a)--(b) Parameters \(\alpha=0.7\), \(\gamma=4\), \(\varepsilon=0.001\), \(K_1=3\), and \(c(\varepsilon)=\varepsilon^{1/2}\). (c)--(d) Parameters \(\alpha=0.55\), \(\gamma=1.2\), \(\varepsilon=0.0005\), \(K_1=2.8\), and \(c(\varepsilon)=\varepsilon^{0.4}\). Panels (a) and (c) show the corresponding barriers, while panels (b) and (d) show the activator barriers together with solutions from two distinct profiles. In both configurations, \(K_2=K_1/\gamma\), and increasing opacity indicates increasing time.
    }
    \label{Fig7}
\end{figure}

\section{Discussion and conclusions}\label{sec_conclusions}

The main contribution of this work is an explicit coupled-barrier method for guaranteeing failure of propagation in heterogeneous FitzHugh--Nagumo systems. The construction yields front-like upper and lower solutions for the nonlinear system that incorporate localized reaction defects, abrupt conductivity changes, and the corresponding transmission inequalities. When the initial data are confined between the upper and lower components of a stationary barrier, the comparison principle implies directional blocking. The resulting directional blocking criteria are directly verifiable for families of initial data and do not require an exact stationary solution or traveling wave of the original system.

Coupled upper and lower solutions for mixed quasimonotone systems have a classical foundation; the comparison formulation developed here is adapted to the analytical requirements of the explicit barrier constructions. In particular, it accommodates piecewise coefficients, transmission inequalities, moving barrier interfaces, unbounded domains, and nonnegative conductivities that may vanish, under the stated regularity hypotheses. The comparison principle then converts the constructed profiles into persistent bounds on the solutions.

The five stationary heterogeneous configurations in Theorem \ref{thm_barrier_heterogeneous} illustrate a unified construction strategy. In each instance, heterogeneity either places the barrier profile directly at a height where the homogeneous scalar inequality is favorable, or increases its outgoing slope sufficiently so that the profile subsequently enters that regime. Once the profile attains a positive slope at a favorable height, it can reach any prescribed height and be extended constantly while preserving the barrier and transmission inequalities. This mechanism explains why the required heterogeneity threshold can be chosen independently of the imposed minimum barrier height. 

The comparison component of this strategy applies to other mixed quasimonotone reaction--diffusion systems, while the explicit barrier profiles depend on additional model-specific structures. In the stationary symmetric construction, these include the particular activator--recovery coupling of the FitzHugh--Nagumo system; in the almost stationary construction, they also involve properties of the cubic reaction term. Therefore, extending the method to other coupled systems would require tailored barrier constructions.

The stationary barriers provide an obstruction to propagation in the blocked direction; however, the long-term dynamics in other regions of the spatial domain are not determined by the present analysis. In particular, the comparison result does not establish whether solutions confined by the barrier converge to the rest state, approach a localized stationary structure, or exhibit more complex long-term behavior. Characterizing the long-term dynamics of blocked solutions in these heterogeneous media remains an open problem.

The positive-recovery-diffusion result indicates that the stationary construction persists under small perturbations. Specifically, a barrier obtained for zero recovery diffusion remains valid for sufficiently small positive recovery diffusion, 
provided a gap is introduced between the coefficients $\tilde\gamma$ and $\gamma$. Numerical experiments suggest that larger recovery diffusion may increase susceptibility to heterogeneity-induced blocking, indicating that regimes beyond the perturbative result may involve effects not captured by the present barrier estimate.

The almost stationary barriers in Theorem \ref{thm_barriers_conduction_block} introduce a different approach that does not require the condition $\alpha\gamma>1$ and provides a quantitative result in homogeneous media.
These barriers yield conditional bounds on the expansion of regions where the solution exceeds the prescribed thresholds, establishing an $O(\tau^{-\beta})$ maximum expansion rate for any $\beta\in(0,1)$.
An open question is whether the propagation bounds can be refined or related to an asymptotic spreading speed under additional assumptions.

The numerical experiments illustrate the behaviors predicted by the analytical constructions and examine parameter regimes beyond those addressed by the theory. When the initial data are confined by an explicit barrier, the computed solutions remain within the predicted bounds throughout the simulated time interval. Parameter sweeps reveal qualitative transitions between propagation and finite-time blocking, demonstrate the conservativeness of the sufficient analytical conditions, and, in the recovery-diffusion simulations, indicate blocking behavior beyond the small-diffusion regime covered by the analysis. Understanding the gap between the theoretically sufficient heterogeneity thresholds established by the barriers and the dynamically observed transitions remains an open question.

\appendix
\section{A weighted positive-part chain rule}
\label{app:weighted_truncation}

The standard quadratic chain rule in an evolution triple is classical; see, for example, \cite[Chapter~3, Section~1]{lionsNonHomogeneousBoundaryValue1972}. The following positive-part version follows by temporal regularization and the Sobolev truncation theorem. Related nonlinear time-calculus arguments appear in \cite[Lemma~1.5]{altLuckhausQuasilinear1983}.

\begin{lemma}[Weighted positive-part chain rule]
\label{lem:weighted_truncation_identity}
Let $U\subset\mathbb R$ be a bounded interval, $S>0$, and $a\in L^\infty(U)$ satisfy $0<m\leq a\leq M$ almost everywhere. If $q\in L^2(0,S;H^1(U))$ and $\partial_t(aq)\in L^2(0,S;H^1(U)^*)$, then $q_+\in L^2(0,S;H^1(U))$ and $t\mapsto \frac12\int_U a(x)q_+(x,t)^2\,dx$ has an absolutely continuous representative. Moreover, for every $0\leq s\leq t\leq S$,
\begin{equation}
\label{eq:weighted_truncation_identity}
\frac12\int_U a q_+(t)^2\,dx-\frac12\int_U a q_+(s)^2\,dx
=
\int_s^t\bigl\langle\partial_\tau(aq)(\tau),q_+(\tau)\bigr\rangle_{H^1(U)^*,H^1(U)}\,d\tau.
\end{equation}
In particular, the corresponding differential identity holds for almost every $t\in(0,S)$.
\end{lemma}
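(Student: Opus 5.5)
The argument is a temporal regularization of $aq$, followed by a passage to the limit in the positive-part energy. First, $q_+\in L^2(0,S;H^1(U))$ with $\partial_x q_+=\mathbf 1_{\{q>0\}}\partial_x q$ a.e.; this is the Sobolev truncation theorem applied for a.e.\ $t$, with measurability in $t$ inherited from $q$. Next set $u:=aq$. Then $u\in L^2(0,S;L^2(U))$ and $\partial_t u\in L^2(0,S;H^1(U)^*)$. Note that $u$ itself need not lie in $L^2(0,S;H^1(U))$, because $a$ may jump. I would therefore avoid the standard Lions--Magenes triple for $u$ and work directly with the convex functional
\[
\Phi(y)=\frac12\int_U \frac{1}{a}\,(y_+)^2\,dx,\qquad \Phi'(y)=\frac{y_+}{a}=q_+\quad\text{when }y=aq.
\]
This uses $a>0$, so that $(aq)_+=aq_+$. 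Consequently $\Phi(u(t))=\frac12\int_U a q_+^2\,dx$ is exactly the quantity in \eqref{eq:weighted_truncation_identity}.

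The regularization step is a time mollification: extend $u$ by reflection near $t=0$ and $t=S$ and set $u_\varepsilon=\rho_\varepsilon *_t u$. It is tempting to set $q_\varepsilon=u_\varepsilon/a=\rho_\varepsilon*_t q$, which lies in $C^1([s,t];H^1(U))$. For this smooth-in-time function, $\Phi(u_\varepsilon)$ is $C^1$ with derivative
\[
\int_U \partial_t u_\varepsilon\,(q_\varepsilon)_+\,dx=\bigl\langle \partial_t u_\varepsilon,(q_\varepsilon)_+\bigr\rangle .
\]
The pairing is legitimate because $\partial_t u_\varepsilon = a\,\partial_t q_\varepsilon\in L^2(U)$ and $(q_\varepsilon)_+\in H^1(U)$. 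Differentiating under the integral sign is justified by the pointwise chain rule for the $C^1$-in-$y$ convex function $y\mapsto (y_+)^2/(2a)$, applied along a $C^1$ path in $L^2$. Integrating from $s$ to $t$ gives the regularized identity.

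To pass $\varepsilon\to0$, I use that $u_\varepsilon\to u$ and $q_\varepsilon\to q$ in $L^2(0,S;H^1)$ for $q$ and in $L^2(L^2)$ for $u$, and that $\partial_t u_\varepsilon\to\partial_t u$ in $L^2(0,S;H^1(U)^*)$ on compact subintervals. The map $q\mapsto q_+$ is continuous on $H^1(U)$ in one dimension, since $H^1\hookrightarrow C(\bar U)$. Hence $(q_\varepsilon)_+\to q_+$ in $L^2(0,S;H^1(U))$, and the right-hand side converges to $\int_s^t\langle \partial_\tau(aq),q_+\rangle\,d\tau$. On the left, $\Phi(u_\varepsilon(t))\to\Phi(u(t))$ for a.e.\ $t$ along a subsequence, because $\Phi$ is Lipschitz on bounded sets of $L^2$.

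This yields the identity for a.e.\ $s,t$. Its right-hand side is an absolutely continuous function of $t$, which supplies the absolutely continuous representative, so the identity extends to all $0\le s\le t\le S$ and differentiation gives the a.e.\ statement. A cleaner alternative is to first note that $u\in C([0,S];H^1(U)^*)$; with the $L^2$ bound this gives weak continuity in $L^2$, and the representative can then be identified.

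The main obstacle is the step where I treat $\Phi$ as a $C^1$ functional along $u_\varepsilon$ while pairing $\partial_t u$ in $H^1(U)^*$ against $q_+$. The test function must be $q_+=\Phi'(u)$ rather than $u_+$, since only $q$, not $u$, has $H^1$ regularity when $a$ jumps. The weighted formulation above is designed for this point. The remaining care is to check that mollification commutes with division by $a$, which holds because $a$ is independent of time; this gives $q_\varepsilon=\rho_\varepsilon * q\in L^2(H^1)$.
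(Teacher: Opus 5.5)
Your proof is correct. It rests on the same two ingredients as the paper's, temporal mollification and the Sobolev truncation theorem, but the bookkeeping differs.

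The paper does not give up on the Lions--Magenes triple. It removes the difficulty you point out ($aq\notin L^2(0,S;H^1(U))$ when $a$ jumps) by putting the weight into the pivot space. With $H_a=L^2(U,a\,dx)$ embedded in $H^1(U)^*$ via $r\mapsto\int_U a\,r\,h\,dx$, the weak time derivative of $q$ itself is $\partial_t(aq)$, and Lions--Magenes gives $q\in C([0,S];H_a)$ directly. The paper then smooths the positive part by $C^1$ functions $\beta_\varepsilon$ with primitives $B_\varepsilon$ and takes two successive limits.

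Your functional $\Phi$ is the same energy written in the variable $u=aq$. Since $r\mapsto\frac12 r_+^2$ is already $C^{1,1}$, you can skip the $\beta_\varepsilon$ layer and pass to the limit once, which is a real simplification. The price is that you need continuity of $q\mapsto q_+$ on $L^2(0,S;H^1(U))$. That is true, but not simply because $H^1\hookrightarrow C(\bar U)$. Uniform convergence handles $\{q\neq0\}$, while on $\{q=0\}$ you need $\partial_x q=0$ a.e. Lifting from $H^1(U)$ to $L^2$ in time also needs a subsequence dominated by an $L^2(0,S)$ function.

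What the paper's route buys is identification at every time. Your limit gives the identity for a.e.\ $s,t$, hence for all $s,t$ once the left side is replaced by its absolutely continuous representative. The paper also shows that this representative equals $\frac12\int_U a\,q_+(t)^2\,dx$ for every $t$. It does so using $q\in C([0,S];H_a)$ and the Lipschitz continuity of the positive part on $H_a$. This is what makes statements such as $E(0)=0$ in the comparison argument meaningful.

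Your suggested fix via weak continuity in $L^2$ does not deliver this, since $\Phi$ is only weakly lower semicontinuous. You can close it inside your own framework by applying the mollification to $\frac12\int_U a\,\lvert q_\varepsilon-q_\delta\rvert^2\,dx$. The linear energy identity, averaged over the starting time and using your reflection extension, shows that the mollifications are Cauchy in $C([0,S];H_a)$. Then $\Phi(u_\varepsilon(t))\to\Phi(u(t))$ for every $t$, and your regularized identity passes to the limit at every $s,t$ directly.
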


\begin{proof}
Set $V=H^1(U)$ and $H_a=L^2(U,a(x)\,dx)$. Since $0<m\leq a\leq M$, one has the evolution triple $V\hookrightarrow H_a\hookrightarrow V^*$, where the embedding $H_a\hookrightarrow V^*$ is given by $r\mapsto(h\mapsto\int_U arh\,dx)$. Under this embedding, the weak time derivative of $q$ is identified with $\partial_t(aq)\in V^*$. The standard Lions--Magenes result therefore gives $q\in C([0,S];H_a)$. Moreover, the Sobolev truncation theorem gives $q_+\in L^2(0,S;V)$.

Let $\beta_\varepsilon\in C^1(\mathbb R)$ be nondecreasing, satisfy $\beta_\varepsilon(0)=0$ and $0\leq\beta_\varepsilon'\leq1$, and be chosen so that $\beta_\varepsilon(r)\to r_+$ and $\beta_\varepsilon'(r)\to\mathbf 1_{\{r>0\}}$ for $r\neq0$. Set $B_\varepsilon(r):=\int_0^r\beta_\varepsilon(\xi)\,d\xi$. For a temporal mollification $q_h$ of $q$, the classical chain rule gives
\[
\frac{d}{dt}\int_U a(x)B_\varepsilon(q_h(x,t))\,dx
=
\bigl\langle\partial_t(aq_h)(t),\beta_\varepsilon(q_h(t))\bigr\rangle_{V^*,V}.
\]
Since $q_h\to q$ in $L^2_{\mathrm{loc}}(0,S;V)$ and, because $a$ is independent of time, $\partial_t(aq_h)\to\partial_t(aq)$ in $L^2_{\mathrm{loc}}(0,S;V^*)$, while $\beta_\varepsilon(q_h)\to\beta_\varepsilon(q)$ in $L^2_{\mathrm{loc}}(0,S;V)$, letting $h\to0$ gives the same identity with $q$ in place of $q_h$ in $\mathcal D'(0,S)$.

As $\varepsilon\to0$, pointwise convergence and domination give $B_\varepsilon(q)\to\frac12(q_+)^2$ in $L^1(U\times(0,S))$, while the Sobolev truncation argument gives $\beta_\varepsilon(q)\to q_+$ in $L^2(0,S;V)$. Hence, passing to the limit yields
\[
\frac{d}{dt}\left(\frac12\int_U a(x)q_+(x,t)^2\,dx\right)
=
\bigl\langle\partial_t(aq)(t),q_+(t)\bigr\rangle_{V^*,V}
\]
in $\mathcal D'(0,S)$. The right-hand side belongs to $L^1(0,S)$, so the weighted energy has an absolutely continuous representative. Since $q\in C([0,S];H_a)$ and the positive-part map is Lipschitz on $H_a$, this representative agrees at every time with $\frac12\int_U a q_+^2\,dx$. Integration over $(s,t)$ now gives \eqref{eq:weighted_truncation_identity}.
\end{proof}

\section*{Declaration of generative AI and AI-assisted technologies in the man\-u\-script preparation process}
During the preparation of this work, the authors used Microsoft 365 Copilot to identify relevant mathematical tools used in the proofs, improve readability, and support the search for references. After using this tool/service, the authors reviewed and edited the content as needed and take full responsibility for the content of the published article.

\section*{Funding}
This work has been partially supported by the grants ANID Fondecyt regular 1262107, ANID Fondecyt regular 1240200, and Fondecyt postdoctorado 3240512.


\end{document}